\documentclass{article}
\usepackage[utf8]{inputenc}
\input{commandes}

\title{Subordination of discrete snakes}
\author{Antoine Aurillard, Mathieu Mourichoux}
\date{}

\begin{document}

\maketitle

\begin{abstract}
Motivated by applications in random geometry, we investigate the notion of subordination of snakes in the discrete setup. More precisely, given a random walk $W$ indexed by a tree $T$ and with steps in $\{...,-1,0,1\}$, we consider its subordinate tree obtained by contracting every edge of $T$ that does not lead to a new record of the walk $W$. When the underlying tree $T$ is a Bienaymé-Galton-Watson tree, we characterize the distribution of this subordinate tree. In particular, when $T$ has a critical offspring distribution in an $\alpha$-stable domain of attraction with $\alpha\in(1,2]$, and under a light tails assumption on the steps, we prove that the associated subordinate tree is itself a Bienaymé-Galton-Watson tree with an offspring distribution in an $\frac{\alpha+1}{2}$-stable domain of attraction. Along the way, we obtain the asymptotic tail of the maximal displacement of the critical branching random walk $W$ in this stable regime, under minimal assumptions. Finally, we use these results to prove scaling limit statements about the subordinate tree.
\end{abstract}

\section{Introduction}

Consider a finite rooted tree $\mathbf{t}$, and a collection of i.i.d. random variables $(J_e)_e$ which assigns a jump $J_e \in \Z$ to each edge $e$ of $\mathbf{t}$. This defines a random walk $W$ indexed by $\mathbf{t}$, also called a $\mathbf{t}$-walk or a $\mathbf{t}$-snake. Formally, $W(u)=\sum_{e \in [\varnothing,u]} J_{e}$ where $[ \varnothing,u]$ is the set of edges in the path from $\varnothing$ to $u$.

In this setting, the linear time is replaced by a tree, but we may still define \emph{record vertices} of $W$ as those vertices $u \in t$ such that if $v \in \mathbf{t}$ is an ancestor of $u$, denoted by $v \prec u$, then $W(v) < W(u)$. For classical random walks, the record times form a sequence which may be characterized as a simple renewal process, but for $\mathbf{t}$-walks a new question arises: consider the record vertices of $W$, and contract all edges in between to get a new tree $S(W)$ which encodes the genealogical relation of those records, as illustrated by Figure~\ref{fig:ExampleSubordination}. What does $S(W)$ look like ? More precisely, how is it distributed and what can we say about its typical size and shape ?

\begin{figure}
    \centering
    \includegraphics[width=\linewidth]{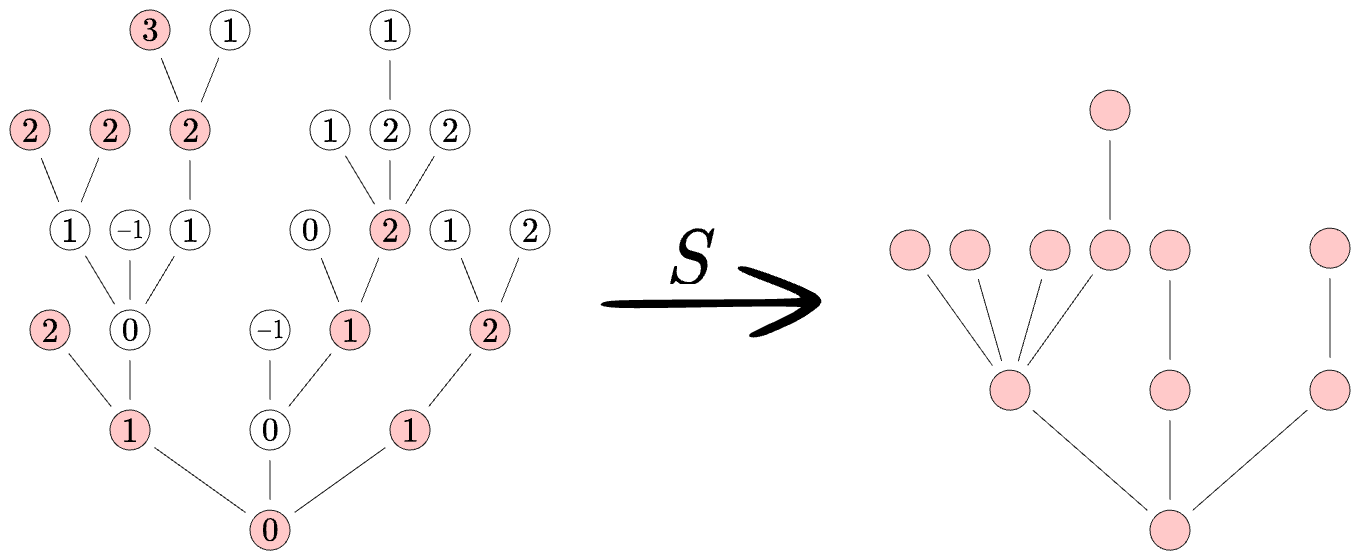}
    \caption{\centering Left: A tree-indexed walk $W$ in $\Z$, with a mark on each of its records. Right: The corresponding subordinate tree $S(W)$.}
    \label{fig:ExampleSubordination}
\end{figure}

This random tree $S:=S(W)$ is called the subordinate tree of the random snake $W$. It is a (measurable) function of $W$, however when the context is clear we may say that $S$ is the subordinate tree of $\mathbf{t}$ and denote it by $S(\mathbf{t})$. The notion of subordination of trees and snakes has been first formally investigated by Le Gall \cite{LeGall2017SubordinationTreesBrownian} in a continuous framework, motivated by application to random geometry (see also \cite{RieraRosalezexcursions} for more general results about subordination of random continuous trees). A striking result is that starting from a Brownian snake, which may be seen as a Brownian motion indexed by a version of the Brownian continuum random tree (CRT) introduced by Aldous \cite{Aldous1991ContinuumRandomTreea}, this subordination procedure gives another well-known continuum random tree called a $\frac{3}{2}$-stable tree. 

As stable trees, including the Brownian CRT which is the $2$-stable tree, are scaling limits of critical Bienaymé trees\footnote{Bienaymé trees are also known as Galton-Watson trees. See \cite{Addario-BerryBrandenbergerHamdanKerriou2022UniversalHeightWidth} for a quick discussion on those terminologies.} conditioned to be large and enjoy a continuous analogue of the branching property of those discrete trees, we chose to investigate whether similar results hold in the discrete framework of critical Bienaymé trees and branching random walks. Moreover, since the stable framework naturally appears, we directly deal with this more general case. Thus in the rest of this paper, excepted when explicitly said otherwise, we make the following assumptions to ensure that we work with discrete versions of those stable trees:

\begin{itemize}
    \item $\mu$ is a critical offspring distribution on $\N=\{0,1,\ldots\}$ in the domain of attraction of an $\alpha$-stable law for some $\alpha \in (1,2]$. In short, we will write $\mu \in \mathrm{dom}(\alpha)$. This means that for \((X_{i})_{i \geq 1}\) i.i.d. with distribution \(\mu\), there is a slowly varying function\footnote{See Appendix~\ref{sct:regular} or \cite{BinghamGoldieTeugels1987RegularVariation} for a general account on slowly varying functions. One must mainly keep in mind Potter's bounds: $\forall \varepsilon>0$, for $x$ large enough, $x^{-\varepsilon} < \ell_{\mu}(x)<x^{\varepsilon}$. We will also make use of regularly varying functions, asymptotic inversion and the Monotone Density Theorem to manipulate those functions, we refer again to \cite[Sections~1.5,~1.7]{BinghamGoldieTeugels1987RegularVariation}.} \(\ell_{\mu}\) such that
\begin{equation}\label{eq:DefiStableCLT}
{\frac{\sum_{i=1}^n X_{i} - n}{n^{1/\alpha}\ell_{\mu}(n)} \text{converges in distribution as $n\to +\infty$}.}
\end{equation}  

i.e. we have a generalized central limit theorem. We will also use that $\mu$ is characterized by the following expansion of its generating function $G_{\mu}$:
\begin{equation}\label{eq:AssumOffspring}
    G_{\mu}(1-s)-(1-s) = s^{\alpha}L_{\mu}\left(\frac{1}{s} \right),
\end{equation}
where $L_{\mu}$ is a slowly varying function related to but different from \(\ell_{\mu}\) (see Section~\ref{ssct:criticalStableBienaymé} for details).

 \item $T$ is a $\mu$-Bienaymé tree, and $T_{n}$ is a $\mu$-Bienaymé tree conditioned to have $n$ edges. 
 
 \item $\theta$ is a \emph{centered} distribution on $\Z$, called the jump distribution since for each edge $e$ its jump $J_{e}$ will be sampled according to $\theta$. Different tail assumptions will be made on $\theta$ throughout this paper.
\end{itemize}

In this context, $T$-walks are better known as \emph{critical branching random walks, or critical BRW} in short, in reference to the branching property of $T$. In order to study their subordinate trees, we will first discuss one of the most studied aspects of BRW, namely the behaviour of their maximum. More specifically, we obtain the tail of the maximum of critical branching random walk where the offspring satisfies \eqref{eq:AssumOffspring}. This generalizes, with a new proof, a recent result of Zhang \cite{Zhang2026EmptyBallsCritical} and a previous result of Lalley \& Shao \cite{LalleyShao2015MaximalDisplacementCritical}. Thanks to this and a discussion on subordination, we prove that Le Gall's result still holds in this discrete framework and we directly extend it to the case where the initial branching random walk, when conditioned to be large, does not converge toward the Brownian snake but toward a stable snake. We also discuss joint scaling limit results for those snakes together with their subordinate tree, and in the particular case of $T_{n}$-walks we provide an estimate on the expected size of the subordinate tree $\E[S(T_{n})]$. 

Finally, as mentioned above, the subordination of trees was first considered by Le Gall for its application to the study of the Brownian sphere (see \cite{legallsurvey2019} for a survey on Brownian geometry). The construction of this random surface is based on the Brownian snake, and Le Gall \cite{LeGall2017SubordinationTreesBrownian} used the associated subordinate tree to study some of its properties. One may wonder if a similar application can be obtained for random maps, which are discrete random surfaces that have been shown to converge toward the Brownian sphere \cite{convergence,uniqueness}. These questions were actually the initial motivation of this study of the discrete framework, and will be addressed in a forthcoming work \cite{article2}.

\subsection{Tail estimate for the maximum of critical BRW}\label{ssec:TailBRW}

With a critical offspring distribution, a branching random walk $W$ dies out almost surely, and we may study its maximal displacement \(\max W\) as a random variable. Lalley \& Shao  \cite{LalleyShao2015MaximalDisplacementCritical} have already studied its tail \(n \mapsto \P(\max W \geq n)\) when the offspring and jump distributions have sufficiently light tails. By linking the \textit{discrete differential equation} satisfied by this tail to its continuous counterpart, namely the true ODE satisfied in the setting of branching Brownian motion, they obtained the following asymptotic:

\begin{equation}\label{eq:LalleyShao}
{ \mathbb P(\max W \geq n) \sim 6 \frac{\sigma_{\theta}^{2}}{\sigma_{\mu}^{2}} \frac{1}{n^{2}}.
}\end{equation}

Note that Le Gall \& Lin \cite{LeGallLin2016RangeTreeindexedRandom} obtained a closely related result, but in their case they relied on the Brownian snake, which is the universal scaling limit of critical branching walks under those light tails assumptions (see \cite{JansonMarckert2005ConvergenceDiscreteSnakes}). With this approach, it is not surprising to see that \eqref{eq:LalleyShao} barely depends on the precise choice of \(\mu\) and \(\theta\). However, one cannot hope that \eqref{eq:LalleyShao} holds when $\mu$ is attracted to an $\alpha$-stable distribution with $\alpha<2$, first because it implies $\sigma_{\mu}^2=+\infty$ but more importantly, it is known since Duquesne \cite{Duquesne2003LimitTheoremContour} and Marzouk \cite{Marzouk2020ScalingLimitsDiscrete} that in this case the scaling limits of large Bienaymé trees and large branching random walks are different from the universal one mentioned earlier. Recently, Zhang \cite{Zhang2026EmptyBallsCritical} still managed to extend Lalley \& Shao's method to the case where $\mu$ satisfies \eqref{eq:AssumOffspring} with $\alpha \in (1,2)$ but $L_{\mu}$ constant, and obtained that $\mathbb P(\max W \geq n)$ is asymptotic (up to a constant) to $n^{-2/(\alpha-1)}$ in this case. Here, we propose a generalization which relates the maximal displacement of the walk to the height of the underlying tree. Instead of Lalley \& Shao's method, a key ingredient is the scaling limit result of Marzouk \cite{Marzouk2020ScalingLimitsDiscrete}. Before discussing this further, we stress that the relation between maximal displacement and height was already hidden in \eqref{eq:LalleyShao}.  Indeed, as soon as \(\sigma_{\mu}^{2} < +\infty\), we have Kolmogorov's estimate \(\mathbb P(Z_n(T) > 0) \sim \frac{2}{\sigma_{\mu}^{2}} \frac{1}{n,}\) and \eqref{eq:LalleyShao} may be written as

\begin{equation}
{ \mathbb P(\max W \geq \sigma_{\theta} n) \sim 3 \mathbb P(Z_{n^2}(T) > 0).
}\end{equation}

 Moreover, Slack \cite{Slack1968BranchingProcessMean} generalized Kolmogorov's estimate to the $\alpha$-stable setting: in this case $P(Z_n(T) > 0)$ is of polynomial order $n^{-1/(\alpha-1)}$ (see \eqref{eq:SlackResult} for a detailed version). Thanks to this, we are able to extend this relation. To do so, we first have to adapt the assumption on the tail of the jump distribution $\theta$. As discussed after the theorem, the minimal assumption required is the following one.

\begin{assum}\label{assum:JumpTailAssumption}
The jump distribution
\(\theta=\text{Law}(J)\) is such that
\begin{equation*}
{\mathbb P\left( \lvert J \rvert \geq \sqrt{ \frac{n}{n^{1/\alpha}\ell_{\mu}(n)} } \right) = o_{n \to +\infty} \left( \frac{1}{n} \right).
}\end{equation*}
where $\ell_{\mu}$ is the slowly varying function from \eqref{eq:DefiStableCLT}. Equivalently,\footnote{see Lemma~\ref{lem:AsymptoticInversionSnake}} we have
\begin{equation*}
    \P\left( \lvert J \rvert \geq n\right) = o\left(\frac{\P(Z_{n^2}(T)>0)}{n^2}\right).
\end{equation*}
\end{assum}
We stress that Assumption~\ref{assum:JumpTailAssumption} implies in particular that $\theta$ has a finite variance.

\begin{theorem}\label{thm:TailMaximumBRW}

Consider \(W = (W(u))_{u \in T}\) a branching random walk with a
critical offspring distribution \(\mu\) and a centered jump distribution
 \(\theta\) (on \(\mathbb{Z}\)). Assume that $\mu \in \mathrm{dom}(\alpha)$ and that $\theta$ satisfies Assumption~\ref{assum:JumpTailAssumption}. Then we have
\begin{equation*}
{\mathbb P\left(  \frac{1}{\sigma_{\theta}}\max W \geq  n \right) \sim_{n \to +\infty} \left( \frac{\alpha+1}{\alpha-1} \right)^{\frac{1}{\alpha-1}} \mathbb P(Z_{n^{2}}(T) > 0).
}\end{equation*} 
In particular, \(n \mapsto \mathbb P(\max W \geq n)\) is
\(\left( -\frac{2}{\alpha-1} \right)\)-varying. 

\end{theorem}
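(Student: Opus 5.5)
We will establish the tail of $\max W$ by first deriving an equation for $u_n:=\P(\max W\ge n)$ from the branching property, and then deducing its asymptotics from the scaling limit of Marzouk \cite{Marzouk2020ScalingLimitsDiscrete}. The constant will then be identified through the ODE satisfied by the continuous (stable snake) counterpart.

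\emph{Step 1 (discrete equation).} Conditioning on the root's offspring and on the jumps of its children, the branching property gives
\begin{equation*}
1-u_n=G_\mu\bigl(1-\E[u_{n-J}]\bigr),
\end{equation*}
with the convention $u_m=1$ for $m\le 0$. Writing $v_n=\E[u_{n-J}]$ and using \eqref{eq:AssumOffspring}, this becomes
\begin{equation*}
v_n-u_n=v_n^{\alpha}L_\mu(1/v_n).
\end{equation*}
Since $\theta$ is centered, a Taylor expansion gives $v_n-u_n\approx\frac{\sigma_\theta^2}{2}\Delta u_n$, which is the discrete analogue of
\begin{equation*}
\frac{\sigma_\theta^2}{2}f''=f^\alpha L_\mu(1/f).
\end{equation*}
For $L_\mu$ constant, this ODE has the solution $f(x)=c\,x^{-2/(\alpha-1)}$ with $c^{\alpha-1}$ proportional to $\frac{\sigma_\theta^2}{L}\cdot\frac{\alpha+1}{(\alpha-1)^2}$.

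\emph{Step 2 (compare with the height).} Slack's result \eqref{eq:SlackResult} describes $q_m:=\P(Z_m(T)>0)$. The function $q$ solves the analogous first-order equation
\begin{equation*}
q_{m}-q_{m+1}=q_m^\alpha L_\mu(1/q_m)\,(1+o(1)).
\end{equation*}
This yields $q_m^{1-\alpha}/L_\mu(1/q_m)\sim(\alpha-1)m$. Assuming $u_n$ is regularly varying of index $-2/(\alpha-1)$, write $u_n=q_{g(n)}$. Matching $\frac{\sigma_\theta^2}{2}u''=u^\alpha L_\mu$ with $-q'=q^\alpha L_\mu$ gives
\begin{equation*}
g(n)=\kappa\, n^2/\sigma_\theta^2,\qquad
\kappa=\frac{\alpha-1}{2(\alpha+1)}\cdot\frac{\alpha+1}{\alpha-1}\ \text{(to be adjusted)},
\end{equation*}
and hence
\begin{equation*}
u_{\sigma_\theta n}\sim\left(\tfrac{\alpha+1}{\alpha-1}\right)^{1/(\alpha-1)}q_{n^2}.
\end{equation*}
As a check, $\alpha=2$ gives the constant $3$, consistent with Lalley--Shao.

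\emph{Step 3 (regular variation of $u$).} This is the main obstacle: the heuristic above presupposes regular variation and smoothness of $u_n$. I would get it probabilistically. On the event that $T$ has height of order $m$, the tree is of size of order $m^{\alpha/(\alpha-1)}$ up to slowly varying corrections. Marzouk's invariance principle for discrete snakes, which is where Assumption~\ref{assum:JumpTailAssumption} is exactly what is needed to exclude big jumps dominating the maximum, gives
\begin{equation*}
\P(\max W\ge x\sigma_\theta\sqrt m\mid Z_m>0)\to\Phi(x),
\end{equation*}
for an explicit $\Phi$ related to the stable snake under its excursion measure. Consequently $u_{\sigma_\theta x\sqrt m}/q_m\to \Phi(x)$ in a suitable sense, with a uniform truncation argument for small $x$. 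This gives regular variation, and the constant is then computed from the continuous equation. For that computation I would use the explicit result for the stable snake: the exit measure satisfies $\mathbb N(\max>x)=c_\alpha x^{-2/(\alpha-1)}$, obtained from the ODE $\frac12 f''=\psi(f)$ with $\psi(\lambda)=\lambda^\alpha$. This yields the constant $\left(\frac{\alpha+1}{\alpha-1}\right)^{1/(\alpha-1)}$ relative to the survival probability $\mathbb N(H>1)=((\alpha-1))^{-1/(\alpha-1)}$.

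\emph{Step 4 (technical points).} The remaining issues are the uniform tightness needed to pass from the conditional limit to the unconditioned tail, namely contributions from trees much higher than $n^2$ and from single large jumps. These are controlled by Assumption~\ref{assum:JumpTailAssumption}, since the expected number of edges in trees of height $\asymp n^2$ times $\P(|J|\ge n)$ is $o(q_{n^2})$, together with monotonicity of $u_n$ to upgrade convergence along subsequences.
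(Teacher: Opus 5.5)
\textbf{Genuine gap: the constant is never actually derived.} Step~3 carries the constant, and it does not go through. You pass from $\mathbb P(\max W\ge x\sigma_\theta\sqrt m\mid Z_m(T)>0)\to\Phi(x)$ to $u_{\sigma_\theta x\sqrt m}/q_m\to\Phi(x)$. But
\begin{equation*}
\frac{u_{\sigma_\theta x\sqrt m}}{q_m}=\mathbb P\bigl(\max W\ge x\sigma_\theta\sqrt m\,\big|\,Z_m(T)>0\bigr)+\frac{1}{q_m}\,\mathbb P\bigl(\max W\ge x\sigma_\theta\sqrt m,\ Z_m(T)=0\bigr),
\end{equation*}
and the last term is of order one. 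Trees of height less than $m$ reach level $x\sigma_\theta\sqrt m$ with probability comparable to $q_m$; in the limit this is $\N_\alpha(\sup\cS^{exc}\ge x,\ \sup\cH^{exc}<1)>0$. Indeed your $\Phi$ is bounded by $1$, whereas by the very statement you are proving the left-hand side tends to $C\,x^{-2/(\alpha-1)}$ with $C=\bigl(\frac{\alpha+1}{\alpha-1}\bigr)^{1/(\alpha-1)}$, which is unbounded as $x\to0$. No truncation in $x$ repairs this. Your Step~4 controls trees higher than $m$, which is the harmless side.

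The correct limit is $\N_\alpha(\sup\cS^{exc}\ge x)/\N_\alpha(\sup\cH^{exc}\ge1)$. Identifying it requires convergence at the level of the excursion measure, with one common normalization for both events: $a_N\,\mathbb P(\max W\ge x\sigma_\theta\sqrt{h_N})\to\N_\alpha(\sup\cS^{exc}\ge x)$ and $a_N\,\mathbb P(H(T)\ge h_N)\to\N_\alpha(\sup\cH^{exc}\ge1)$, where $a_N=N^{1/\alpha}\ell_\mu(N)$ and $h_N=N/a_N$. One can get this, e.g., from a Poisson count of the trees reaching these levels in the i.i.d.\ forest of Theorem~\ref{thm:ConvergenceSnakesInfiniteVolume}. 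Marzouk's theorem alone concerns trees conditioned on their size, so it does not directly give even your conditional statement. With this excursion-level statement and the formula of Archer--Carrance--M\'enard, your evaluation at $x=1$ is correct, and Steps~1--2 become unnecessary.

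Step~2 does not supply the constant either. The $\kappa$ you display equals $1/2$, which would give $2^{1/(\alpha-1)}$. The correct matching of $\frac{\sigma_\theta^2}{2}u''=\psi_\mu(u)$ with $-q'=\psi_\mu(q)$ and $\psi_\mu(q_m)/q_m\sim\frac{1}{(\alpha-1)m}$ gives $g(y)\sim\frac{\alpha-1}{\alpha+1}\,y^2/\sigma_\theta^2$. Turning this heuristic into a proof is the real content of the paper's argument, and it is missing from your proposal:
\begin{enumerate}
\item The paper gets regular variation of $u$ by conditioning on $\{\max W\ge\sigma_\theta n\}$ itself (Proposition~\ref{prop:ConvergenceSnakesCondMax}). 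Then $u(\lambda x)/u(x)$ is a conditional distribution function, and no small-tree issue arises.
\item It shows $\mathbb E[u_{n-J}]\sim u_n$. This needs $\mathbb P(\lvert J\rvert\ge n)=o(u_n)$, which follows from Assumption~\ref{assum:JumpTailAssumption} combined with the a priori bound $u_n\ge c\,q_{n^2}$ (follow one branch of length $n^2$ and apply Donsker).
\item It sums the equation twice, using the exact identity $\sum_{k\ge n}(u_{k-J}-u_k)=\sum_{i=1}^{J_{+}}u_{n-i}-\sum_{i=0}^{J_{-}-1}u_{n+i}$ and its second-order analogue, together with Karamata's theorem and $\mathbb E[J^2;\ \lvert J\rvert\ge n]=o(u_n)$.
\end{enumerate}
This forces the index to be $-\frac{2}{\alpha-1}$ and yields $\psi_\mu(u_n)/u_n\sim\frac{\alpha+1}{(\alpha-1)^2}\,\sigma_\theta^2/n^2$. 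Slack's estimate \eqref{eq:SlackResult} then inverts this into the stated constant, with no explicit continuum formula needed.
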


Note that Assumption~\ref{assum:JumpTailAssumption} is consistent with previous related works: Marzouk \cite{Marzouk2020ScalingLimitsDiscrete} made the exact same assumption to establish the convergence of large branching random walks to a Brownian snake driven by a stable tree. Zhang \cite{Zhang2026EmptyBallsCritical} assumed that $\theta$ has a finite moment of order $\frac{2\alpha}{\alpha-1}$, which is slightly stronger than Assumption~\ref{assum:JumpTailAssumption} in his specific case where $\ell_{\mu}$ is constant. Hou \& al. \cite{HouJiangRenSong2023TailProbabilityMaximal} also gave a result similar to Zhang, but for branching Lévy processes with $\alpha$-stable branching, and they also made some slightly stronger moment assumption.
Moreover, Maillard \cite{Maillard2016MaximumTreeindexedRandom} has shown that when the jump distribution $\theta$ is regular but does not satisfy Assumption~\ref{assum:JumpTailAssumption}, we fall in the so-called big jump regime where the maximum $\max W$ has the same tail as the biggest jump in $W$. In particular, when the tail of $\theta$ is \((-\beta)\)-varying for some
\(\beta  > 1\), Assumption~\ref{assum:JumpTailAssumption} fails for $\beta < \frac{2\alpha}{\alpha-1}$ and  Maillard obtained in that case that \(n \mapsto \mathbb P(\max W \geq n)\) is
\(\left( -\beta/\alpha \right)\)-varying. When
\(\beta\) goes to \(\frac{2\alpha}{\alpha-1}\), this exponent goes to the exponent
\(-\frac{2}{\alpha-1}\) of Theorem~\ref{thm:TailMaximumBRW}.

\subsection{Subordination of (sub-)critical snakes}\label{ssct:SubordinationBienaymeTrees}

We now return to the study of the subordinate tree $S$ associated with a $T$-snake $W$, and we first discuss this in the wider setting where we only assume that the offspring distribution $\mu$ has mean $m \leq 1$ (and $\mu \neq \delta_1$) to ensure that $T$ is finite.

It is not difficult to see that in this general setting, $S$ has the branching property, which gives the following result.

\begin{prop}\label{prop:BienaymePreserved}
When $\mu$ is subcritical or critical, $S$ is a Bienaymé tree.
\end{prop}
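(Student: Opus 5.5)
The plan is to establish the branching property of $S$ directly through a stopping-line decomposition of the snake $W$. The root of $S$ is the root $\varnothing$ of $T$; under the convention in which a vertex counts as a record when all its strict ancestors have smaller value, $\varnothing$ is a record trivially. The children of $\varnothing$ in $S$ are the \emph{first records} below $\varnothing$: those vertices $u \neq \varnothing$ of $T$ with $W(u) > W(\varnothing)=0$ and $W(v)\le 0$ for every $\varnothing \prec v \prec u$. Call this set $\mathcal{L}$. It is an optional (stopping) line for the branching random walk, since whether $u\in\mathcal{L}$ depends only on the jumps along $[\varnothing,u]$. Since $T$ is finite almost surely (because $m\le 1$ and $\mu\neq\delta_1$), $\mathcal{L}$ is a.s. finite. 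Let $\nu$ denote the law of $|\mathcal{L}|$.

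The key observation uses the skip-free upward condition, namely that jumps lie in $\{\dots,-1,0,1\}$: every $u\in\mathcal{L}$ satisfies exactly $W(u)=1$. Given a record $u$, the records of $W$ in the subtree $T_u$ are, up to the shift by $W(u)$, exactly the records of the shifted snake $(W(uv)-W(u))_{v\in T_u}$. The only subtlety is whether a vertex of $T_u$ could be a record for the subtree walk but fail to be one globally because of some ancestor of $u$. This cannot happen: ancestors of $u$ have values $< W(u)$, so exceeding $W(u)$ suffices. Conversely, a global record in $T_u$ different from $u$ must exceed $W(u)$. Hence the subtree of $S$ rooted at $u$ equals $S$ of the shifted snake on $T_u$. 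Note that this step in fact only uses $W(v)<W(u)$ for ancestors, so it would hold without skip-freeness; skip-freeness is simply convenient.

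Next we invoke the strong branching property of the branching random walk along the optional line $\mathcal{L}$. Conditionally on the $\sigma$-field $\mathcal{F}_{\mathcal{L}}$ generated by the tree and the jumps outside the subtrees strictly below $\mathcal{L}$, the shifted subtree-snakes $(W(uv)-W(u))_{v\in T_u}$ for $u\in\mathcal{L}$ are i.i.d. copies of $W$. This holds because each $T_u$ is a $\mu$-Bienaymé tree carrying i.i.d. $\theta$-jumps, independent of everything above the line. This is the standard strong Markov/branching property for stopping lines, which I would justify by summing over the finite deterministic possibilities for $(T$ restricted above $\mathcal{L}$, $\mathcal{L})$ and using independence of disjoint subtrees in a Bienaymé tree. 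It follows that the number of children of the root in $S$ is $|\mathcal{L}|\sim\nu$, and that conditionally on it the subtrees of $S$ rooted at these children are i.i.d. copies of $S$. By induction on generations, or equivalently by the characterization of Bienaymé trees through the branching property, $S$ is a $\nu$-Bienaymé tree.

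The main obstacle is the careful statement of the stopping-line branching property. In particular, the subtrees $T_u$ for $u\in\mathcal{L}$ are disjoint, because no element of $\mathcal{L}$ is an ancestor of another, and the event $\{u\in\mathcal{L}\}$ is measurable with respect to the data above $u$. One must also note that $S$ is almost surely finite, so that $\nu$ is (sub)critical. This follows from the finiteness of $T$, so the identification is not a problem.
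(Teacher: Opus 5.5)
Your proposal is correct and follows the paper's argument: take the first records $u_1,\dots,u_{Z_1(S)}$, identify each subtree $S^{(i)}$ with the subordinate tree of the shifted snake on $T^{(u_i)}$, and use the branching property at these vertices to get that the $S^{(i)}$ are i.i.d.\ copies of $S$. Your stopping-line formalization makes explicit what the paper states informally. As you note yourself, skip-freeness plays no role here, which matters because the proposition is stated without Assumption~\ref{assum:JumpAtMost1}.
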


In the rest of this paper, $\nu$ denotes the offspring distribution of $S$, i.e. the distribution of $Z_1(S)$. It remains to describe this distribution $\nu$ to fully understand the distribution of $S$. However, it does not seem possible to give an explicit expression for $\nu$. To simplify the study of the record vertices, we make the following assumption on $\theta$ to ensure that the branching random walk increases in a continuous way:

\begin{assum}\label{assum:JumpAtMost1}
$\theta$ is supported on $\{ \ldots, -2,-1,0,1 \}$.
\end{assum}

We stress that this covers the case of simple random walk where $\theta(1)=\theta(-1)=p$ and $\theta(0)=1-2p$. When Assumption~\ref{assum:JumpAtMost1} holds, we can characterize $\nu$ through its generating function $G_{\nu}$.

\begin{theorem}\label{thm:FunctionnalCharacterization}
Set $Y=1-J$, where $J$ is centered and has distribution $\theta$. When $\mu$ is subcritical or critical and Assumption~\ref{assum:JumpAtMost1} holds, $Y$ is non-negative a.s. and $G_{\nu}$ is the unique generating function solution of the functional equation
\begin{equation}\label{eq:FunctionnalCharacterization}
    \forall s \in [0,1], G_{\nu}(s)=G_{\mu}\left( \E[G_{\nu}^{\circ Y}(s)]\right),
\end{equation}
where $G_{\nu}^{\circ Y}(s) = \underbrace{ G_{\nu} \circ \dots \circ G_{\nu} }_{ Y \text{ times} }(s)$, and $G_{\nu}^{\circ 0}(s)=s$.

Moreover, the mean $\E[Z_1(S)]$ of $\nu$ is the unique solution in $[0,1]$ of the equation $x = m G_Y(x)$, where $m$ is the mean of $\mu$ and $G_Y$ is the generating function of $Y$. This entails that $\nu$ is critical if and only $\mu$ is critical.
\end{theorem}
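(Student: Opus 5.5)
We plan to derive the functional equation from a first-generation decomposition of the subordinate tree, using Proposition~\ref{prop:BienaymePreserved} (so $S$ is a $\nu$-Bienaymé tree). Under Assumption~\ref{assum:JumpAtMost1}, $J\le 1$ a.s., so $Y=1-J\ge 0$. Because the walk increases by at most one per step, a descendant of $\varnothing$ that is a record must sit at height exactly $1$ relative to the walk's maximum along its ancestral line. The children of the root in $S$ are the vertices $u\neq\varnothing$ with $W(u)=1$ and $W(v)\le 0$ for every strict ancestor $v\ne\varnothing$ of $u$ (for $v\ne \varnothing$, $W(v)\le 0$ is equivalent to $W(v)<1$, and $W(\varnothing)=0$).

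\textbf{Key auxiliary quantity.} For $k\ge 0$, let $N_k$ denote the number of vertices in a $T$-walk started from $0$ that reach level $k$ for the first time along their ancestral line. By the skip-free upward structure and the branching property, reaching level $k$ first requires reaching level $1$, then level $2$ from each of those first-passage vertices, and so on. The subtrees above distinct first-passage vertices are i.i.d.\ copies of $(T,W)$ shifted. Hence $N_k$ is the size of generation $k$ of a Bienaymé process with offspring law $\mathrm{Law}(N_1)=\nu$, and $\E[s^{N_k}]=G_\nu^{\circ k}(s)$, with $N_0=1$ by convention.

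\textbf{Decomposition at the root.} The root has $\xi\sim\mu$ children. For child $i$, with jump $J_i$, the child either sits at level $1$ (if $J_i=1$, i.e.\ $Y_i=0$, and then it is itself a first record, contributing $1$), or at level $J_i\le 0$. In the latter case, the number of first-record vertices at level $1$ within its subtree is the number of first passages to height $1-J_i=Y_i$ of an independent $T$-walk. In both cases the contribution has generating function $G_\nu^{\circ Y_i}(s)$. Independence over children then gives
\begin{equation*}
G_\nu(s)=G_\mu\big(\E[G_\nu^{\circ Y}(s)]\big).
\end{equation*}

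\textbf{Uniqueness.} This is the step I expect to be the main obstacle. I would set up a monotone iteration: define $\Phi(g)(s)=G_\mu(\E[g^{\circ Y}(s)])$ on generating functions, and note that $\Phi$ is monotone in $g$ pointwise. The truncated counts (first passages within the first $n$ generations of $T$) have generating functions $g_n=\Phi^{n}(1)$ decreasing to $G_\nu$. For an arbitrary solution $g$, one has $g\le 1$, so by monotonicity $g\le g_n$ and hence $g\le G_\nu$. The reverse inequality is the delicate part. I would try comparing $g$ with the law of counts on the event that $T$ has height at most $n$, using finiteness of $T$ (because $m\le1$, $\mu\ne\delta_1$). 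Concretely, I would show $g\ge \Phi^n(0)$-type lower iterates and that these converge to $G_\nu$ because $\P(\text{height}>n)\to0$. If this becomes messy, the fallback is a coupling: interpret any solution $g$ as the law of a multitype recursive count and show it equals $N_1$ by extinction.

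\textbf{Mean.} Differentiating at $s=1$, let $x=G_\nu'(1)$. Then $(G_\nu^{\circ y})'(1)=x^y$, which gives $x=m\,\E[x^Y]=m\,G_Y(x)$ (finite, since $x\le 1$ by monotone limits, where the truncated means satisfy the same recursion starting below). For uniqueness on $[0,1]$, observe that $G_Y$ is convex with $G_Y(1)=1$ and $G_Y'(1)=\E[Y]=1$. Hence $mG_Y(x)\ge m x$, with $mG_Y(x)>x$ for $x<1$ when $m=1$, unless $Y\equiv 1$, in which case $J\equiv0$, a degenerate case to exclude or treat separately. The function $x\mapsto mG_Y(x)-x$ is convex, positive at $0$ when $\P(Y=0)>0$, and equal to $m-1\le0$ at $1$, so it has a unique root in $[0,1]$. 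That root is $1$ iff $m=1$, which gives criticality of $\nu$ iff $\mu$ is critical.
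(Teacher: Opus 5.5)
Your derivation of the functional equation (root decomposition, with child $i$ contributing generation $Y_i$ of a $\nu$-Bienaymé process) and of the mean equation $x=mG_Y(x)$ is the same as the paper's. You are right to single out $\theta=\delta_0$. In that case $\nu=\delta_0$ even when $m=1$, and every $x\in[0,1]$ solves $x=G_Y(x)$. So the paper's claim that $x\mapsto G_Y(x)/x$ is a bijection onto $[1,+\infty)$ silently requires $\theta(1)>0$.

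For uniqueness you take a genuinely different route. The paper restricts to $1$-Lipschitz $f$ with $f(1)=1$, asserting that every generating-function solution has this form. It then proves a strict contraction: $\|f^{\circ k}-g^{\circ k}\|_\infty\le k\|f-g\|_\infty$, averaged using $\E[Y]=1$, followed by $|G_\mu(a)-G_\mu(b)|<|a-b|$ for $a\neq b$ at a point where $|\Phi(f)-\Phi(g)|$ is maximal. Your sandwich $\Phi^n(0)\le g\le\Phi^n(1)$ holds for any non-decreasing $g:[0,1]\to[0,1]$ by monotonicity of $\Phi$. It uses neither the Lipschitz restriction nor $\E[Y]=1$, only the a.s. finiteness of $T$. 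As a bonus, under the coupling described below, the means $x_n$ of $\Phi^n(1)$ satisfy $x_0=0$ and $x_{n+1}=mG_Y(x_n)$, and increase to $\E[Z_1(S)]$. This shows a priori that the mean is finite and equals the smallest root in $[0,1]$. The paper passes over this point when it takes expectations in the decomposition: that identity is also satisfied by $+\infty$, and for $0<m<1$ with $Y$ bounded, $x=mG_Y(x)$ has a second root in $(1,+\infty)$.

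As written, however, the claim carrying your argument is false: $\Phi^n(1)$ is not the generating function of the first passages within the first $n$ generations of $T$. Already for $n=2$, a child with $J=-1$ contributes $\Phi(1)\circ\Phi(1)$, which counts its grandchildren reached by two $+1$ steps, i.e. vertices at depth $3$. The truncation must be in recursion depth. One application of $\Phi$ replaces the subproblem ``first passage one level above $v$'' by subproblems rooted at the children of $v$ and at the successive intermediate first-passage vertices below them, all strictly deeper in $T$. Hence every subproblem left unresolved after $n$ iterations is rooted at a vertex of height at least $n$. Then $\Phi^n(1)$ (resp. $\Phi^n(0)$) is the law of the count in which these unresolved subproblems contribute $0$ (resp. $+\infty$, with $s^{\infty}:=0$ for all $s\in[0,1]$) vertices. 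Both counts coincide with $Z_1(S)$ on $\{H(T)<n\}$, so $0\le\Phi^n(1)-\Phi^n(0)\le\P(H(T)\ge n)\to0$. This closes both inequalities at once; with this coupling the lower bound is no more delicate than the upper one.
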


\begin{rmk}[On the supercritical case]
Actually, when $\mu$ is supercritical, it is also possible to define $S$ and see that it is a $\nu$-Bienaymé tree where $\nu$ satisfies \eqref{eq:FunctionnalCharacterization}, but this requires to change somewhat the framework. Indeed, by studying the existence of a ray $\xi \in \partial T$ such that $\forall u \in \xi,\ W(u) \leq 0$, one sees that $\P(Z_1(S)=+\infty) >0$ always holds in this case. This means that $\nu$ is an offspring distribution defined on $\{0,1,2,\ldots,+\infty\}$ and we have to consider non-locally finite trees. We also have to give up the order structure of the tree since when $Z_1(S)=+\infty$, the possible existence of several negative rays as above implies that with a positive probability, one cannot enumerate all of those first records with the natural integers. Because of those technicalities and the fact that we are more interested in critical Bienaymé trees, we will not discuss further this case.
\end{rmk}

The implicit characterization of $G_{\nu}$ given in Theorem~\ref{thm:FunctionnalCharacterization} is sufficient to get that subordination preserves criticality, but we did not manage to extract further information on the asymptotic behaviour of $G_{\nu}$ at $1$ (and thus on the asymptotic behaviour of $\nu$) from this characterization. Nonetheless, focusing on our case of interest, namely when $\mu$ is critical and attracted to an $\alpha$-stable distribution, we obtain the asymptotic of $G_{\nu}$ with another method based on Theorem~\ref{thm:TailMaximumBRW}.

\begin{theorem}\label{thm:NewOffspringStable}
Assume that $\mu \in \mathrm{dom}(\alpha)$. If the centered jump distribution $\theta$ satisfies Assumptions~\ref{assum:JumpTailAssumption} and~\ref{assum:JumpAtMost1} then $\nu \in \mathrm{dom}\left(\frac{\alpha+1}{2}\right)$. More precisely, its generating function $G_{\nu}$ has the following expansion at $1$:
\begin{equation*}
    G_{\nu}(1-s)-(1-s)  \sim_{s \to 0} \frac{2}{\sigma_{\theta}} \sqrt{\frac{s^{\alpha+1}}{\alpha+1}L_{\mu}\left(\frac1s\right)},
\end{equation*}
where $L_{\mu}$ is the slowly varying function from \eqref{eq:AssumOffspring}.
\end{theorem}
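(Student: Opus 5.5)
The plan is to read the asymptotics of $G_{\nu}$ at $1$ from the tail of the height of $S$, which Theorem~\ref{thm:TailMaximumBRW} already gives.

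\textbf{Step 1: height of $S$ equals $\max W$.} Under Assumption~\ref{assum:JumpAtMost1} the walk increases by at most $1$ per edge. So every level $1,\dots,\max W$ is reached by a record vertex, and a record at level $j+1$ has an ancestral record at level $j$. Hence the height of $S$ (counting the root as generation $0$) equals $\max W$. Set $q_k:=\P(\max W\ge k)=\P(Z_k(S)>0)$.

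\textbf{Step 2: recursion for $q_k$.} By Proposition~\ref{prop:BienaymePreserved}, $S$ is a $\nu$-Bienaymé tree, so $q_{k+1}=1-G_{\nu}(1-q_k)$. Writing $f(s):=G_{\nu}(1-s)-(1-s)$, this reads
\[
f(q_k)=q_k-q_{k+1}.
\]
The function $f$ is nonnegative, increasing and convex on $[0,1]$ with $f(0)=0$. Since $q_k$ decreases, the increments $q_k-q_{k+1}$ form a nonincreasing sequence.

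\textbf{Step 3: asymptotics of the increments.} Theorem~\ref{thm:TailMaximumBRW} gives $q_k\sim c\,p(k^2/\sigma_\theta^2)$, where $c=\left(\frac{\alpha+1}{\alpha-1}\right)^{1/(\alpha-1)}$ and $p(n)=\P(Z_n(T)>0)$. In particular $q_k$ is $\left(-\tfrac{2}{\alpha-1}\right)$-varying. Because the increments are monotone, a discrete Monotone Density Theorem applies and yields
\[
q_k-q_{k+1}\sim \frac{2}{\alpha-1}\,\frac{q_k}{k}.
\]
I expect this to be the main obstacle: only monotonicity of the increments, not regularity, is available. I would make it rigorous by sandwiching: sum the increments over blocks $[k,\lambda k]$, use that they are monotone, compare with $q_k(1-\lambda^{-2/(\alpha-1)})$, and let $\lambda\downarrow1$.

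\textbf{Step 4: express $k$ in terms of $q_k$.} Apply the same reasoning to $T$. From $p(n)-p(n+1)=p(n)^{\alpha}L_{\mu}(1/p(n))$ together with Slack's estimate \eqref{eq:SlackResult}, one gets
\[
p(n)^{\alpha-1}L_{\mu}\left(\tfrac{1}{p(n)}\right)\sim\frac{1}{(\alpha-1)n}.
\]
Take $n=k^2/\sigma_\theta^2$, substitute $p\sim q_k/c$, and use that $L_\mu$ is slowly varying. This gives
\[
\frac{1}{k}\sim \sigma_\theta\sqrt{(\alpha-1)\,c^{-(\alpha-1)}q_k^{\alpha-1}L_{\mu}(1/q_k)}.
\]
Since $c^{\alpha-1}=\frac{\alpha+1}{\alpha-1}$, plugging this into Step 3 gives
\[
f(q_k)\sim\frac{2}{\sigma_\theta}\sqrt{\frac{q_k^{\alpha+1}}{\alpha+1}L_{\mu}\left(\frac{1}{q_k}\right)}.
\]
The constant should be rechecked carefully at this point, including where $\sigma_\theta$ enters through the rescaling $n=k^2/\sigma_\theta^2$.

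\textbf{Step 5: from the sequence to all $s\to0$.} For $s\in[q_{k+1},q_k]$, use monotonicity of $f$. Since $q_{k+1}/q_k\to1$ and the target function is regularly varying, the asymptotic extends to all $s\to0$.

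\textbf{Step 6: identify the domain of attraction.} The expansion has the form $s^{(\alpha+1)/2}\tilde L(1/s)$ with $\tilde L=\tfrac{2}{\sigma_\theta\sqrt{\alpha+1}}\sqrt{L_{\mu}}$ slowly varying. By the characterization \eqref{eq:AssumOffspring} of $\mathrm{dom}(\cdot)$, this gives $\nu\in\mathrm{dom}\left(\frac{\alpha+1}{2}\right)$; the mean-$1$ property comes from Theorem~\ref{thm:FunctionnalCharacterization}.
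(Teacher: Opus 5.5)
Your proposal is correct and follows essentially the paper's route: the recursion $q(n)-q(n+1)=\psi_{\nu}(q(n))$, regular variation of $q$ from Theorem~\ref{thm:TailMaximumBRW}, the Monotone Density Theorem (which you prove by hand with block sums), inversion via Slack's estimate, and a monotone sandwich to pass from $s=q_k$ to all $s\to0$. There is one slip in the Step~4 display: since $1/n=\sigma_\theta^2/k^2$, the prefactor of $1/k$ must be $\frac{1}{\sigma_\theta}$, not $\sigma_\theta$. With that correction, substituting into Step~3 gives exactly the constant $\frac{2}{\sigma_\theta}\sqrt{\frac{s^{\alpha+1}}{\alpha+1}L_{\mu}(1/s)}$ that you state in your final line.
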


In particular, we recover a discrete version of Le Gall's result: when $\mu$ has a finite variance, which means that $\alpha=2$ and a large $\mu$-Bienaymé tree looks like the Brownian tree, the subordinate tree $S$ has an offspring distribution $\nu$ attracted to a $\frac{3}{2}$-stable law, so that it converges to a $\frac{3}{2}$-stable tree when conditioned to be large. Note that our proof in the discrete case is also reminiscent of Le Gall's proof for the subordination of the Brownian snake. Namely, we show that $S$ satisfies the branching property, and then we use that  the height of $S$ is $\max W$, hence its distribution is known. However, in the continuous case, the knowledge of this distribution is sufficient to directly characterize the subordinate tree as a $\frac{3}{2}$-stable tree, while in the discrete case, we simply have the tail of this distribution and we will need some analysis to obtain our result.

Theorem~\ref{thm:NewOffspringStable} also reveals that $\nu$ must have a really heavy tail, in particular its variance is infinite, hence it is not possible to realize all Bienaymé trees as subordinate trees. We do not know if every critical offspring distribution $\nu \in \mathrm{dom}(\frac{\alpha+1}{2})$ may be realized by choosing a well-suited $\mu \in \mathrm{dom}(\alpha)$. Nonetheless, given $G_{\nu}$ and $\theta$, it is clear that $G_{\mu}$ is uniquely determined by Theorem~\ref{thm:FunctionnalCharacterization} when it exists, and for a well-chosen $\nu$, namely a special case of critical \emph{Power Fractional Laws}, one can actually solve this equation with unknown variable $\mu$ to get the following explicit example of couple $(\mu,\nu)$:

\begin{expl}\label{ex:ExampleAndrea}
Consider simple symmetric jumps given by $\theta(1)=\theta(-1)=1/2$, and consider the distribution $\nu$ attracted to a $\frac32$-law given by $\nu(n) = \frac{3}{2} \frac{C_{n}}{4^{n}(n+2)}$, where $C_n$ is the $n$-th Catalan number. Then there is an offspring distribution $\mu$ such that \eqref{eq:FunctionnalCharacterization} holds, i.e. satisfying 
\begin{equation*}
    \forall s \in [0,1], G_{\nu}(s)=G_{\mu}\left( \frac{s+G_{\nu}\circ G_{\nu}(s)}{2}\right),
\end{equation*}
which is given by $\mu(n)=\frac{2}{3} \left( \frac{2}{9} \right)^{n} C_{n}.$
\end{expl}

Finally, we point out that all the results in this section may be transferred to a variant $\widetilde{S}:=\widetilde{S}(W)$ of the subordinate tree $S(W)$. This tree $\widetilde{S}(W)$ is formed by the \emph{weak records} of $W$, namely the points $u \in T$ such that $\forall v \preceq u,\ W(v) \leq W(u)$ as depicted in Figure~\ref{fig:WeakRecords}. It may not be defined as the subordinate tree of $T$ with respect to some non-decreasing function built from $W$, but it still enjoys the branching property and its offspring distribution $\widetilde{\nu}$ may be characterized and studied via $\nu$ in the following way:

\begin{corollary}\label{cor:WeakRecords}

\begin{enumerate}[(i)]
    \item When $\mu$ is subcritical or critical, $\widetilde{S}$ is a Bienaymé tree. We let $\widetilde{\nu}$ denote its offspring distribution.
    \item When $\mu$ is subcritical or critical and the centered jump distribution $\theta = \mathrm{Law}(J)$ satisfies Assumption~\ref{assum:JumpAtMost1}, the generating function $G_{\widetilde{\nu}}$ of $\widetilde{\nu}$ is
    \begin{equation*}
        \forall s \in [0,1], G_{\widetilde{\nu}}(s) = G_{\mu}\bigl( \E[G_{\nu}^{\circ J_{-}}(s)] \bigr).
    \end{equation*}
    In particular, $\widetilde{\nu}$ is critical if and only if $\mu$ is critical, else it is subcritical.
    \item When $\mu \in \mathrm{dom}(\alpha)$ and $\theta$ satisfies Assumptions~\ref{assum:JumpTailAssumption} and~\ref{assum:JumpAtMost1}, then $\widetilde{\nu} \in \mathrm{dom}\left(\frac{\alpha+1}{2}\right)$. More precisely, its generating function has the following expansion at $1$:
    \begin{equation*}
        G_{\widetilde{\nu}}(1-s)-(1-s) \sim_{s \to 0} \theta(1)\times \left(G_{\nu}(1-s)-(1-s)\right)  \sim_{s \to 0} \frac{2\theta(1)}{\sigma_{\theta}} \sqrt{\frac{s^{\alpha+1}}{\alpha+1}L_{\mu}\left(\frac1s\right)}.
    \end{equation*}
\end{enumerate}
 
\end{corollary}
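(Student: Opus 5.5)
We reduce $\widetilde{S}$ to copies of $S$ by looking at the first generation of $T$. Let $u_1,\dots,u_k$ be the children of the root, with $k \sim \mu$ and jumps $J_1,\dots,J_k$ i.i.d.\ with law $\theta$. Under Assumption~\ref{assum:JumpAtMost1}, $J_i \in \{\dots,-1,0,1\}$.

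For (i), a weak record of $W$ is a vertex $u$ with $W(u) \geq \max_{v \preceq u} W(v)$. The first-generation weak records below $u_i$ are the vertices $w$ in the subtree $T_{u_i}$ such that $W(w)-W(u_i) \geq -J_i$ along the path and $W(w) = \max$ up to $w$, minimal for $\preceq$. This depends only on $J_i$ and the snake on $T_{u_i}$. The subtrees of $\widetilde{S}$ rooted at first-generation weak records are again weak-record trees of independent copies of $W$. This holds by the branching property of $T$ and the strong Markov property along stopping lines, exactly as in the proof of Proposition~\ref{prop:BienaymePreserved}. Hence $\widetilde{S}$ is a Bienaymé tree, and $Z_1(\widetilde S)=\sum_{i\le k} N_i$ with $N_i$ i.i.d.

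For (ii), we compute $\E[s^{N_1}]$ by conditioning on $J_1$. If $J_1 \in \{0,1\}$, then $u_1$ is itself a weak record, so $N_1=1$. If $J_1=-j<0$, the walk must climb back from level $-j$ to level $0$, and weak records of level $0$ in $T_{u_1}$ are exactly the first vertices reaching level $0$. Since the steps go up by at most one, reaching a new maximum level one unit above the current maximum means passing through the strict records. In $T_{u_1}$ started at $-j$, the first vertices reaching level $-j+1$ form the first generation of $S(T_{u_1})$, with law $\nu$. Iterating this $j$ times, and using the branching property (Theorem~\ref{thm:FunctionnalCharacterization}'s derivation), the first vertices at level $0$ have generating function $G_\nu^{\circ j}(s)$. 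The only subtlety is that $T_{u_1}$ is itself a full $\mu$-tree, which matches the structure of $S$ (the root of $S$ has $Z_1(S)$ given by the first visits to level $+1$). So $\E[s^{N_1}]=\E[G_\nu^{\circ J_-}(s)]$, where $G_\nu^{\circ 0}(s)=s$, and therefore $G_{\widetilde\nu}(s)=G_\mu(\E[G_\nu^{\circ J_-}(s)])$.

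Criticality follows by differentiating at $s=1$: the mean is $m\cdot\E[\nu'^{J_-}]$, where $\nu'$ denotes the mean of $\nu$. By Theorem~\ref{thm:FunctionnalCharacterization}, $\nu'=1$ iff $m=1$, and $\nu'<1$ otherwise. So the mean of $\widetilde\nu$ equals $1$ if $m=1$ and is $<1$ otherwise.

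For (iii), set $f(s)=1-G_\nu(1-s)$, so that $f(s)=s-h(s)$ with $h(s)=G_\nu(1-s)-(1-s)\sim c\,s^{(\alpha+1)/2}L(1/s)^{1/2}$ by Theorem~\ref{thm:NewOffspringStable}. Then $1-G_\nu^{\circ j}(1-s)=s-j\,h(s)(1+o(1))$ for fixed $j$, and the error term has to be handled uniformly to sum over $j$. A crude bound suffices for this: $0\le s-f^{\circ j}(s)\le j\,h(s)$, because $f(x)\ge x-h(s)$ for $x\le s$ by monotonicity of $h$ near $0$ (convexity of $G_\nu$). Dominated convergence with $\E[J_-]<\infty$ then gives $1-\E[G_\nu^{\circ J_-}(1-s)]=s-\E[J_-]h(s)(1+o(1))$.

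Plugging this into $G_\mu$ and using \eqref{eq:AssumOffspring}:
\[
G_{\widetilde\nu}(1-s)=1-s+\E[J_-]h(s)(1+o(1))+O\bigl(s^\alpha L_\mu(1/s)\bigr).
\]
The last term is $o(h(s))$ since $\alpha>(\alpha+1)/2$. Finally, centering gives $\E[J_-]=\E[J_+]=\theta(1)$, which yields the claimed expansion. Membership in $\mathrm{dom}((\alpha+1)/2)$ follows from this expansion, via the characterization \eqref{eq:AssumOffspring}.

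The main obstacle is making the combinatorial identity in (ii) rigorous, namely that successive first-passage levels compose as iterates of $G_\nu$. This rests on the same stopping-line Markov argument used for Theorem~\ref{thm:FunctionnalCharacterization}, which I would invoke directly.
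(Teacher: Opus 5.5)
Your proof is correct. Parts (i) and (ii) follow the paper's argument: the same decomposition $Z_1(\widetilde S)=\sum_{i\le Z_1(T)} Z_{(J_i)_-}\bigl(S(T^{(i)})\bigr)$, the same generating-function computation, and the same mean $m\,G_{J_-}(\E[Z_1(S)])$.

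For (iii) your route differs from the paper's. Write $\psi(s)=G_\nu(1-s)-(1-s)$ (your $h$).

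The paper never expands the iterates of $G_\nu$ one by one. It subtracts the functional equation of Theorem~\ref{thm:FunctionnalCharacterization} for $\nu$ from the one for $\widetilde\nu$, which gives
\[
\psi(s)-\widetilde\psi(s)=\E\bigl[G_\nu^{\circ(1-J)}(1-s)-G_\nu^{\circ J_-}(1-s)\bigr]+O(\psi_\mu(s)).
\]
The exponents $1-J$ and $J_-$ coincide on $\{J=1\}$ and differ by exactly one composition on $\{J\le 0\}$. So the integrand equals $\1_{J\le 0}\,\psi\bigl(1-G_\nu^{\circ J_-}(1-s)\bigr)$, which is dominated by $\1_{J\le 0}\psi(s)$. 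This yields $\widetilde\psi\sim(1-\P(J\le 0))\psi=\theta(1)\psi$ with a bounded dominating function and no control on iterates.

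You instead expand $\E[G_\nu^{\circ J_-}(1-s)]$ directly. You use the telescoping bound $0\le s-f^{\circ j}(s)\le j\,\psi(s)$ and dominated convergence with $\E[J_-]<\infty$, which gives $\widetilde\psi\sim\E[J_-]\,\psi$. Only then do you use centering to identify $\E[J_-]=\E[J_+]=\theta(1)$.

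What each route buys:
\begin{itemize}
\item The paper's comparison trick exploits the special fact that $Y=1-J$ and $J_-$ differ by at most one composition, so the constant $\theta(1)=1-\P(J\le0)$ appears directly.
\item Your computation is more general: it shows $1-\E[G_\nu^{\circ K}(1-s)]=s-\E[K]\,\psi(s)(1+o(1))$ for any integrable random $K$. With $K=Y$ this is consistent with the equation for $\nu$, since $\E[Y]=1$. It also makes the role of centering explicit.
\end{itemize}

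The pointwise asymptotic $s-f^{\circ j}(s)\sim j\,\psi(s)$, which you state without proof, is easy to supply. Write $s-f^{\circ j}(s)=\sum_{k<j}\psi\bigl(f^{\circ k}(s)\bigr)$, then use $f^{\circ k}(s)\sim s$ (because $\psi(s)=o(s)$) and the regular variation of $\psi$. The paper relies on this same fact.
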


\begin{figure}
    \centering
    \includegraphics[width = \linewidth]{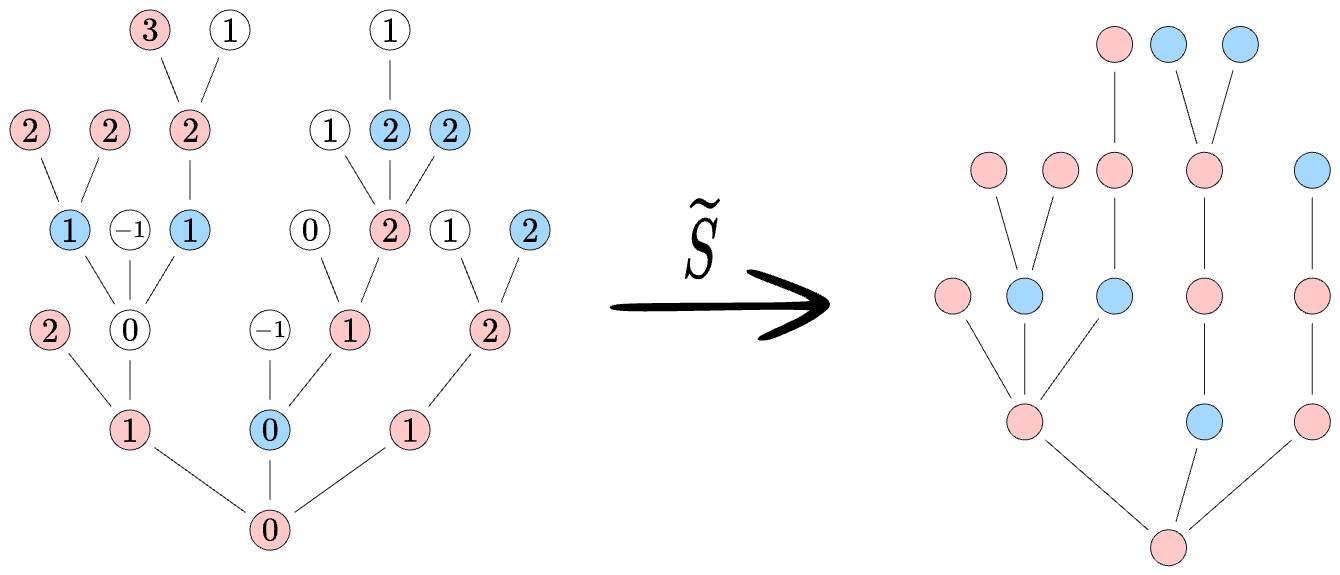}
    \caption{\centering Left: the tree-indexed walk $W$ from Figure~\ref{fig:ExampleSubordination}, with its strict records marked in red and its weak records marked in blue. Right: the associated tree of weak records $\widetilde{S}(W)$, where we kept the colors to emphasize that $S(W)$ is not a subtree of $\widetilde{S}(W)$, instead one must erase the blue vertices in order to recover $S(W)$. }
    \label{fig:WeakRecords}
\end{figure}

\subsection{Scaling limits of large BRW and their subordinate trees}\label{ssct:ScalingLimit}

As a contour function of the subordinate tree $S$ can be expressed by means of the snake $W=(W(u))_{u \in T}$, the existing scaling limit results for random snakes (recalled in Section~\ref{ssct:ScalingLimitsTreesSnakes} and Appendix~\ref{sct:knownScalingLimits}) and a discussion on continuity will imply that those objects converge jointly. However, it does not say much about the limiting distribution of subordinate trees. Theorem~\ref{thm:NewOffspringStable} strongly suggests that $S:=S(T)$ \emph{for a large $T$} converges in distribution to an $\frac{\alpha+1}{2}$-stable tree (conditioned in some way, see again Section~\ref{ssct:ScalingLimitsTreesSnakes} for details). In the particular setting where we condition $\max W$ to be large, such convergence actually is a direct corollary of Theorem~\ref{thm:NewOffspringStable}.

\begin{corollary}\label{cor:ScalingLimitSubordinateCondMax}
Assume that $\mu \in \mathrm{dom}(\alpha)$ and the centered jump distribution $\theta$ satisfies Assumptions~\ref{assum:JumpTailAssumption} and~\ref{assum:JumpAtMost1}. Consider $W_{[n]}$ distributed as a $(\mu,\theta)$-random snake $W$ conditioned on $\max W \geq \sigma_{\theta}n$, together with its subordinate tree $S(W_{[n]})$. We have the following convergence in distribution with respect to the Gromov-Hausdorff topology:
\begin{equation*}
    \frac{1}{\sigma_{\theta}n} S(W_{[n]}) \xrightarrow[n \to +\infty]{(d)} \cT^{[\geq1]}_{\frac{\alpha+1}{2}},
\end{equation*}
where $\cT^{[\geq1]}_{\frac{\alpha+1}{2}}$ is an $\frac{\alpha+1}{2}$-stable tree conditioned to have height at least $1$ as defined in Section~\ref{ssct:ScalingLimitsTreesSnakes}.
Moreover, this convergence holds jointly with the convergence of the encoding processes of $W_{[n]}$ as stated in Proposition~\ref{prop:ConvergenceSnakesCondMax}.
\end{corollary}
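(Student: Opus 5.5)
The plan is to reduce the statement to a known scaling limit for Bienaymé trees conditioned on their height, and then to upgrade it to a joint statement through a coupling argument. The starting point is that the height of the subordinate tree is the maximum of the walk. Under Assumption~\ref{assum:JumpAtMost1} the walk increases by at most one per edge, so the records appear one level at a time: a record at height $k\ge1$ has as parent in $S(W)$ the last record at height $k-1$ on its ancestral line. Consequently the vertex of $S(W)$ at generation $k$ are exactly the records with $W(u)=k$, and $\mathrm{height}(S(W))=\max W$. Hence the event $\{\max W\ge \sigma_\theta n\}$ is the event $\{\mathrm{height}(S)\ge \lceil\sigma_\theta n\rceil\}$, and $S(W_{[n]})$ is distributed as a $\nu$-Bienaymé tree (Proposition~\ref{prop:BienaymePreserved}) conditioned to have height at least $h_n:=\lceil \sigma_\theta n\rceil$.

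Next I use Theorem~\ref{thm:NewOffspringStable}, which gives $\nu\in\mathrm{dom}\left(\frac{\alpha+1}{2}\right)$ with $\nu$ critical. I then invoke the classical result (Duquesne's contour convergence, in the form for trees conditioned on height recalled in Section~\ref{ssct:ScalingLimitsTreesSnakes} and Appendix~\ref{sct:knownScalingLimits}): for a critical $\nu\in\mathrm{dom}(\beta)$, a $\nu$-Bienaymé tree conditioned to have height $\ge h$ and rescaled by $1/h$ converges in the Gromov--Hausdorff sense to the $\beta$-stable tree conditioned to have height $\ge1$. Distances are scaled by the height and not by a size-dependent factor, so no slowly varying normalisation enters; this is why the normalisation $\sigma_\theta n$ appears in the statement. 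Since $h_n/(\sigma_\theta n)\to1$, this yields the claimed convergence. One point to check is that the normalisation of the limit $\cT^{[\ge1]}_{\frac{\alpha+1}{2}}$ agrees with the one fixed in Section~\ref{ssct:ScalingLimitsTreesSnakes}. When the limit is conditioned on height and distances are scaled by height, the constant in front of $G_\nu$ should be irrelevant, because the conditioned height process law is scale-free. I would verify this with the explicit height tail $\P(H\ge x)\propto x^{-1/(\beta-1)}$ of the stable tree.

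For the joint convergence with Proposition~\ref{prop:ConvergenceSnakesCondMax}, both marginals are now tight, so the pair is tight. It remains to identify subsequential limits. The contour of $S(W)$ is a deterministic functional of the encoding processes of $W$, namely the time-changed running-maximum construction. Continuity of this functional at the limit snake, in the sense of the continuity discussion announced in Section~\ref{ssct:ScalingLimit}, then identifies the joint limit as the continuous subordination of the limit snake. Its marginal must coincide with $\cT^{[\ge1]}_{\frac{\alpha+1}{2}}$ by the first part. I expect this continuity step to be the main obstacle: the discrete subordinate contour involves contracting edges, and one has to show that this contraction is negligible after rescaling. If needed, I would sidestep the issue by using only the Gromov--Hausdorff statement. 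The metric of $S(W)$ is $d_S(u,v)=W(u)+W(v)-2\min_{[u\wedge v,\cdot]}$-type record levels, that is, a pseudo-distance $W(u)+W(v)-2\max_{w\preceq u\wedge v}W(w)$ on records. This expression is continuous in the snake under uniform convergence, which gives the joint convergence directly.
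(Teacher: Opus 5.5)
Your argument is correct and is essentially the paper's proof. Both identify $S(W_{[n]})$ as a $\nu$-Bienaymé tree conditioned to have height at least $\lceil\sigma_\theta n\rceil$ via $H(S)=\max W$, apply Theorem~\ref{thm:NewOffspringStable} and Theorem~\ref{thm:StableTreeHeight}, and obtain the joint statement because $s\mapsto\max\omega_s$ (which codes exactly your pseudo-distance $\overline W(u)+\overline W(v)-2\overline W(u\wedge v)$) is a $1$-Lipschitz functional of the snake.

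The contraction/time-change obstacle you anticipate does not arise, since this non-standard contour already codes $S$ as an $\mathbb{R}$-tree, and the tightness detour is unnecessary because the continuous mapping theorem applies directly. What you should make explicit is Theorem~\ref{thm:homeo}: Proposition~\ref{prop:ConvergenceSnakesCondMax} gives convergence of the contour and spatial contour processes (the tours), and this theorem transfers it to the path representation, which is where the Lipschitz property holds.
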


We mention that the exponent $\frac{\alpha+1}{2}$ could have be obtained directly from the continuum approach, by adapting the ideas of \cite{LeGall2017SubordinationTreesBrownian} or using the result of \cite[Section 5]{RieraRosalezexcursions}. We also stress that Corollary~\ref{cor:ScalingLimitSubordinateCondMax} relies on the fact that a contour function of $S$ can be expressed as a running maximum of the snake $W=(W(u))_{u \in T}$ along branches (see Section~\ref{ssct:DiscreteSubordination}), but this contour function differs from the standard contour process of $S$ by a random time change (which is also a function of $W$). As explained by Le Gall  \cite{LeGall2017SubordinationTreesBrownian}, understanding this time change and its convergence toward its continuous counterpart (appearing in \cite[Theorem 2]{LeGall2017SubordinationTreesBrownian} in the finite variance case) would give more information, such as joint convergence of all encoding processes and thus convergence with respect to the Gromov-Hausdorff-Prokhorov topology. It would also be a useful step toward obtaining scaling limits in slightly different settings, namely 
\begin{itemize}
    \item for a sequence of i.i.d. random snakes and their subordinate trees;
    \item for a random snake $W_n$ indexed by $T_n$, a $\mu$-Bienaymé tree conditioned to have $n$ edges, and its subordinate tree $S(W_n)$.
\end{itemize}

In this last case, we already know that $T_n$ and $W_n$ converge jointly with $S(W_n)$, again because of this contour function of $S(W_n)$ that may be expressed with $W_n$. However, the dependence between the size of $T_n$ and the shape of $S(W_n)$ is still unclear thus we were not able to give an explicit description of the limit law of $S(W_n)$. Nonetheless, we believe that understanding the distributional limit of $\#S(W_n)$ could lead to this explicit description, and we provide its asymptotic in expectation.

\begin{theorem}\label{thm:expectedNumberOfRecords}
Assume that $\mu \in \mathrm{dom}(\alpha)$ and the jump distribution $\theta$ is centered, has a finite variance and satisfies Assumptions~\ref{assum:JumpAtMost1}. Then, for a $T_n$-indexed random walk $W_n$, i.e. a $(\mu,\theta)$-snake conditioned to have $n$ edges, we have
\begin{equation*}{
\frac{1}{\sigma_{\theta}}\E[\#S(W_{n})] \sim_{n \to +\infty}  \sqrt{ \frac{\alpha}{2\pi} }\frac{\Gamma\left(1-\frac{1}{2\alpha} \right)}{2^{1/\alpha}} \ell_{\mu}(n)^{1/2}n^{\frac{\alpha+1}{2\alpha}},
}\end{equation*}
where $\ell_{\mu}$ is the slowly varying function from \eqref{eq:DefiStableCLT}.

\end{theorem}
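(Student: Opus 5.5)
We count records in $T_n$ by linearity of expectation, summing over vertices and over heights. A vertex $u$ at height $h$ is a (strict) record exactly when the walk along the ancestral line $[\varnothing,u]$ reaches a new strict maximum at time $h$. Since the jumps along a fixed branch are i.i.d. with law $\theta$, reversing time shows that this event has probability $p_h:=\P(S_1<0,\dots,S_h<0)$ for the reversed walk, equivalently $\P(\text{the walk } S \text{ stays} \le 0 \text{ on } [1,h])$ up to the usual duality. Here $S$ is a random walk with step law $\theta$, and we also count the root.

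Under Assumption~\ref{assum:JumpAtMost1} the walk is skip-free upward, so its ladder heights are trivial. By the classical result on the first positive passage time (Sparre Andersen / Spitzer, with finite variance), $p_h\sim \frac{\sigma_\theta}{\sqrt{2\pi}}\cdot\frac{c}{\sqrt h}$. The constant $c$ is explicit for skip-free walks: $\P(\tau^+>h)\sim \frac{\sigma_\theta\sqrt{2}}{\sqrt{\pi h}}\cdot\frac12$ or similar. I would compute it from $\E[S_{\tau^+}]=1$ and Spitzer's formula, and this is what produces the factor $\sigma_\theta/\sqrt{2\pi}$ visible in the statement. Because $W_n$ is independent of the shape of $T_n$, we get
\[
\E[\#S(W_n)] = \E\Bigl[\sum_{u\in T_n} p_{|u|}\Bigr] = \E\bigl[\, p_{H_n}\,\bigr](n+1),
\]
where $H_n$ is the height of a uniformly chosen vertex of $T_n$.

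The second ingredient is the scaling of typical heights in $T_n$. The uniform-vertex height satisfies $\frac{H_n}{a_n}\to \mathcal H$ in distribution, where $a_n = n/(n^{1/\alpha}\ell_\mu(n)) = n^{1-1/\alpha}/\ell_\mu(n)$ (Duquesne). The law of $\mathcal H$ is the height of a uniform point in the normalized $\alpha$-stable tree. Then heuristically
\[
\E[\#S(W_n)]\approx (n+1)\, C\sigma_\theta\, a_n^{-1/2}\,\E[\mathcal H^{-1/2}],
\]
and $n a_n^{-1/2}= n^{\frac{\alpha+1}{2\alpha}}\ell_\mu(n)^{1/2}$, matching the claimed order. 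The constant should come from $\E[\mathcal H^{-1/2}]$. A cleaner route is to compute it exactly: by the many-to-one/size-biasing for conditioned Bienaymé trees, the law of the height of a uniform vertex in $T_n$ is given by $\P(H_n=h)=\frac{h}{n}\,\frac{\P(R_{n}=...)}{\P(\#T=n+1)}$. Concretely, via the Lukasiewicz path and the cyclic lemma, the expected number of vertices at height $h$ in $T_n$ can be expressed through the random walk with step $\mu-1$ and the spine decomposition. Using the local limit theorem (Gnedenko) for the stable walk yields a Mellin-type integral evaluating to $\Gamma(1-\frac1{2\alpha})2^{-1/\alpha}\sqrt{\alpha}$ times constants.

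The main obstacle is justifying the interchange of limit and expectation for $\E[H_n^{-1/2}]$. Since $x\mapsto x^{-1/2}$ blows up at $0$, we need uniform control of $\P(H_n\le \varepsilon a_n)$, i.e. of the expected number of vertices at small heights. The plan is to bound $\E[Z_h(T_n)]\le C h\,\cdot$ (regularly varying factor) uniformly for $h\le \varepsilon a_n$, using the spine decomposition and local limit estimates. This bound shows that the contribution of small heights to $\sum_h \E[Z_h(T_n)]h^{-1/2}$ is $O(\varepsilon^{1/2}) n a_n^{-1/2}$, and uniform integrability then follows. Identifying the exact constant with $\E[\mathcal H^{-1/2}]$ is done by the explicit density of $\mathcal H$ (Duquesne), or directly through the local limit integral.
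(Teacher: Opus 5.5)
Your architecture is the paper's: $\mathbb{E}[1+\#S(W_n)]=(n+1)\,\mathbb{E}[f_\theta(|U_n|)]$ for a uniform vertex $U_n$, Duquesne's convergence $|U_n|/h_n\to\mathcal U_\alpha$ (your $a_n$ is the paper's $h_n$), the constant from $\mathbb{E}[\mathcal U_\alpha^{-1/2}]$ via the explicit Mittag-Leffler density, and uniform integrability from controlling small heights.

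\textbf{The gap: the one-dimensional record probability.} The theorem is an exact asymptotic, so you need $f_\theta(h)\sim\sigma_\theta/\sqrt{2\pi h}$ exactly, i.e.\ $c=1$ in your notation. You leave $c$ open, and your description of the event is not correct as written: strict and weak inequalities are not interchangeable ``up to the usual duality''.
\begin{itemize}
\item After time reversal, a strict record at height $h$ is the event $\{X_1>0,\dots,X_h>0\}$ for a walk $X$ with steps $\theta$. This is the tail of the weak descending ladder epoch. By skip-freeness it equals $\theta(1)\,\mathbb{P}(X_1\ge 0,\dots,X_{h-1}\ge 0)$.
\item It is not $\mathbb{P}(\tau^+>h)$. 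In fact $\mathbb{P}(\tau^+>h)\sim\sqrt2/(\sigma_\theta\sqrt{\pi h})$, which is off from $f_\theta(h)$ by a factor $2/\sigma_\theta^2$. The value you guessed for $\mathbb{P}(\tau^+>h)$ is actually the correct one for $f_\theta(h)$.
\end{itemize}
Your Spitzer route can be completed once the ladder epochs are tracked correctly. With $c_0=\sum_{n\ge1}n^{-1}\bigl(\mathbb{P}(X_n\le 0)-\frac12\bigr)$, Sparre Andersen gives $\mathbb{P}(\tau^+>h)\sim e^{c_0}/\sqrt{\pi h}$ and $f_\theta(h)\sim e^{-c_0}/\sqrt{\pi h}$. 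Spitzer's identity $\mathbb{E}[X_{\tau^+}]=\frac{\sigma_\theta}{\sqrt2}e^{c_0}$, together with $X_{\tau^+}=1$, then gives $e^{-c_0}=\sigma_\theta/\sqrt2$. This bookkeeping is exactly what the proposal is missing.

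The paper avoids ladder theory entirely. By skip-freeness, the number of records of $X$ up to time $k$ is $1+\max_{j\le k}X_j$. Its mean is $\sim\sigma_\theta\sqrt{2k/\pi}$ by Donsker plus Doob's $L^2$ inequality. Since $f_\theta(k)=\mathbb{P}(X_1>0,\dots,X_k>0)$ is nonincreasing, the Monotone Density Theorem differentiates this asymptotic.

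\textbf{Small-height control.} The paper uses Kortchemski's sub-exponential tail bound on $\mathrm{Width}(T_n)$ to get $\mathbb{P}(1\le|U_n|\le h_n y)\le Cy$, followed by an $L^p$ bound with $p\in(1,2)$. Your spine plus local-limit plan is a legitimate self-contained substitute. A uniform bound $\sup_{h\le n/2}\mathbb{E}[Z_h(T_n)]\le C\,n/h_n$ already yields your $O(\varepsilon^{1/2})\,n\,h_n^{-1/2}$ estimate, so the linear-in-$h$ bound is unnecessary.

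Be careful, though. When $\mu$ has infinite variance (always the case if $\alpha<2$), the size-biased offspring law has infinite mean, so the number of subtrees grafted on the spine is not integrable. The bound must come from a uniform estimate such as $\sup_k k\,\mathbb{P}(S_m=-k)\le C$ for the Lukasiewicz walk (local limit theorem plus its light left tail), combined with Kemperman's formula. Averaging the number of grafted subtrees will not work.

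Finally, the exact formula for $\mathbb{P}(H_n=h)$ you sketch is not needed. Duquesne's limit and the explicit density of $\mathcal U_\alpha$ give $\mathbb{E}[\mathcal U_\alpha^{-1/2}]$ directly.
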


The constant $\sqrt{ \frac{\alpha}{2\pi} }\frac{\Gamma\left(1-\frac{1}{2\alpha} \right)}{2^{1/\alpha}}$ is actually linked to the so-called profile of the tree $T_n$, which may be needed to obtain a convergence in distribution for  $\ell_{\mu}(n)^{-1/2}n^{-\frac{\alpha+1}{2\alpha}}\#S(W_n)$. Moreover, this result can also be understood as the asymptotic number of records in $W_n$. Since $\frac{\alpha+1}{2\alpha}$ decreases from $1$ to $\frac{3}{4}$, we see that this number of records heavily depends on the geometry of $T_n$ at large scale, in accordance with the intuition that it must depend on the presence and size of hubs, i.e. nodes with large degree, in the tree. We also see that the number of records for branching random walks is always significantly higher than the approximately $\sqrt n$ records appearing during the first $n$ steps of a standard random walk.

\subsection{Outline}

All the background needed on trees, snakes, their convergence and especially scaling limits of Bienaymé trees and BRW, is given in Section~\ref{sct:preliminaries}. In particular, we give a general definition of subordination in this section. Then, Section~\ref{sct:TailMaxBRW} is devoted to the proof of Theorem~\ref{thm:TailMaximumBRW} on the maximum of BRW, while Section~\ref{sct:SubordinationAndRecords} deals with the characterization of $S$ as a $\nu$-Bienaymé tree via records and the application to weak records. Finally, we prove the scaling limit results on subordinate trees in Section~\ref{sct:ScalinglimitsSubordination}. We also provide Appendix~\ref{sct:knownScalingLimits} to give more details on some adaptions of already existing scaling limits result for snakes, and Appendix~\ref{sct:regular} in case the reader is not familiar with regularly varying functions.

\subsection{Acknowledgement}

We are indebted to Alexis Metz-Donnadieu, who has discussed with us his ongoing work adapting Marzouk's scaling limits of discrete snakes, and Andrea Sportiello, who found the example~\ref{ex:ExampleAndrea}. We also thank Grégory Miermont, Igor Kortchemski, Bruno Schapira, Cyril Marzouk and Benedicte Haas for several helpful discussions.

\section{Preliminaries}\label{sct:preliminaries}

\subsection{Formalism about trees and snakes}\label{ssct:FormalismTreeSnake}

\paragraph{Finite plane trees.}

In this work, we will use the standard formalism for finite plane trees (also known as rooted planar trees) from Neuveu \cite{Neveu1986}, which we quickly recall here. Define
\[\mathcal{U}=\bigcup_{n=0}^\infty (\N^*)^n\]
where $\N^*=\{1,2,3...\}$ and $(\N^*)^0=\{\varnothing\}$. Hence, an element of $\mathcal{U}$ is a finite sequence of positive integers. If $u,v\in\mathcal{U}$, $uv$ stands for the concatenation of $u$ and $v$. If $v$ is of the form $uj$ for some $j\in\N^*$, we say that $u$ is the \emph{parent} of $v$, and that $v$ is a \emph{child} of $u$. Similarly, if $v$ is of the form $uw$ for some $u,w\in\mathcal{U}$, we say that $u$ is an ancestor of $v$ and that $v$ is a descendant of $u$, and we write $u \preceq v$. We define the \emph{generation} of $u$ recursively by $\vert \varnothing \rvert =0$ and for all $j\geq 1, \lvert uj\rvert$ = $\lvert u\rvert+1$. A (finite) rooted planar tree $\mathbf{t}$ is a finite subset of $\mathcal{U}$ such that 
\begin{enumerate}
    \item $\varnothing\in\mathbf{t}$ ($\varnothing$ is called the \emph{root} of $\mathbf{t}$)
    \item if $v\in\mathbf{t}$ and $v\neq\varnothing$, the parent of $v$ belongs to $\mathbf{t}$
    \item for every $u\in\mathcal{U}$, there exists $k_u(\mathbf{t})\geq0$ such that $uj\in\mathbf{t}$ if and only if $j\leq k_u(\mathbf{t})$.
\end{enumerate}
A plane tree can be seen as a graph, where two vertices $u$ and $v$ are linked by an edge if and only if $u$ is the parent of $v$, or vice versa. This graph has a natural embedding in the plane, in which the edges from a vertex $u$ to its children $u1,u2,...,uk_u(\mathbf{t})$ are drawn from left to right. If $u$ and $v$ are two vertices of $\mathbf{t}$, $\llbracket u,v \rrbracket$ denotes the unique simple path of vertices from $u$ to $v$. For every tree $\mathbf{t}$, we denote by $\#\mathbf{t}$ the number of edges of $\mathbf{t}$, $H(\mathbf{t}) = \max_{u \in \mathbf{t}} \vert u \rvert$ its height, and we also set $Z_k(\mathbf{t})=\#\{ u \in \mathbf{t}: \lvert u \rvert =k\}$ for all $k \geq 0$. Moreover, for every vertex $u\in\mathbf{t}$, we let $\mathbf{t}^{(u)}$ be the fringe tree induced by $u$, which is formally defined as
$$v\in\mathbf{t}^{(u)}\text{ if and only if } uv\in\mathbf{t}.$$

Finally, given a tree $\mathbf{t}$ we define its \emph{contour process} $C_{\mathbf t}$ by exploring the tree until all edges have been crossed in both directions and keeping track of the generation at each step. More formally, we define the contour exploration $u_0,u_1,\ldots, u_{2\#\mathbf{t}}$ as follows: 
\begin{itemize}
    \item $u_0 = \varnothing$;
    \item Given $u_0,\ldots,u_i$ for $i < 2\#\mathbf{t}$, $u_{i+1}$ is the first child of $u_i$ that does not belong to $\{ u_0,\ldots,u_{i-1} \}$ if it exists, else $u_{i+1}$ is the parent of $u_i$.
\end{itemize}
Then, the \emph{contour process} $C_{\mathbf t}$ is defined by 
$$\forall k \in \llbracket 0, 2\#\mathbf{t}\rrbracket,\quad C_{\mathbf t}(k)=\lvert u_k \rvert.$$ See Figure~\ref{fig:ContourAndSpatialContour} for an illustration. Note that considering the contour of a finite plane tree is a way to encode it as a Dyck path, and it is well known that this encoding is one-to-one.

\paragraph{Tree-indexed walks, a.k.a. snakes.}

Given a tree $\mathbf{t}$, a random walk $W$ indexed by $\mathbf{t}$, also called a random $\mathbf{t}$-walk or random $\mathbf{t}$-snake depending on context, is defined as a classical random walk but with the time set $\{0,1,2,\ldots\}$ being replaced by $\mathbf{t}$. In this paper we are interested into the $1$-dimensional case of a $\mathbf{t}$-walk $W$ taking value in $\Z$, hence to formally define $W$ we consider a collection of i.i.d. variables $(J_e)_{e \text{ edges in } \mathbf{t}}$ called the \emph{jumps} and distributed according to some distribution $\theta$ on $\Z$. We then set 

\begin{equation*}
    \forall u \in \mathbf{t}, W(u) = \sum_{e \text{ edges in } \llbracket \varnothing,u\rrbracket} J_e.
\end{equation*}

Note that there are more general frameworks for $\mathbf{t}$-snakes, where we may allow dependence between the offspring of a vertex and the jumps from this parent to its children (see e.g. \cite{Marckert2008LineageProcessGaltonWatson}). We will need independence later to study subordinate trees, but those considerations on randomness are not relevant to define subordination itself. In this section, we will thus consider a deterministic $\mathbf{t}$-snake $\mathbf{w}$, which simply is a function $\mathbf{w}: \mathbf{t} \mapsto \Z$. Following the terminology of random walks, we will refer to $\mathbf{w}(u)$ as the \emph{spatial position} of the vertex $u$, but we may also call $\mathbf{w}(u)$ the \emph{label} of $u$ as it is often the case in combinatorics (where $\mathbf{w}$ is rather called a label function on the tree $\mathbf{t}$).

There are essentially two ways to encode a $\mathbf{t}$-snake $\mathbf{w}$ by means of functions. In both cases, we rely on the contour exploration $u_0,u_1,\ldots, u_{2\#\mathbf{t}}$ of $\mathbf{t}$ as defined earlier. 

\begin{enumerate}
    \item Define the \emph{spatial contour process} $C^{sp}_{\mathbf{w}}$ of $\mathbf{w}$ in the same way we have defined the contour process $C_{\mathbf{t}}$ but with the spatial position $\mathbf{w}(u)$ instead of the height $\lvert u\rvert$: $\forall k \in \llbracket 0, 2\#\mathbf{t}\rrbracket, C^{sp}_{\mathbf{w}}(k)=\mathbf{w}(u_k)$. This process alone does not characterize $\mathbf{w}$, but the pair $\left(C_{\mathbf{t}},C^{sp}_{\mathbf{w}}\right)$ does. For this reason, we also call $C_{\mathbf{t}}=C_{\mathbf{w}}$ the contour process of $\mathbf{w}$.
    
\begin{figure}
    \centering
    \includegraphics[width = \linewidth]{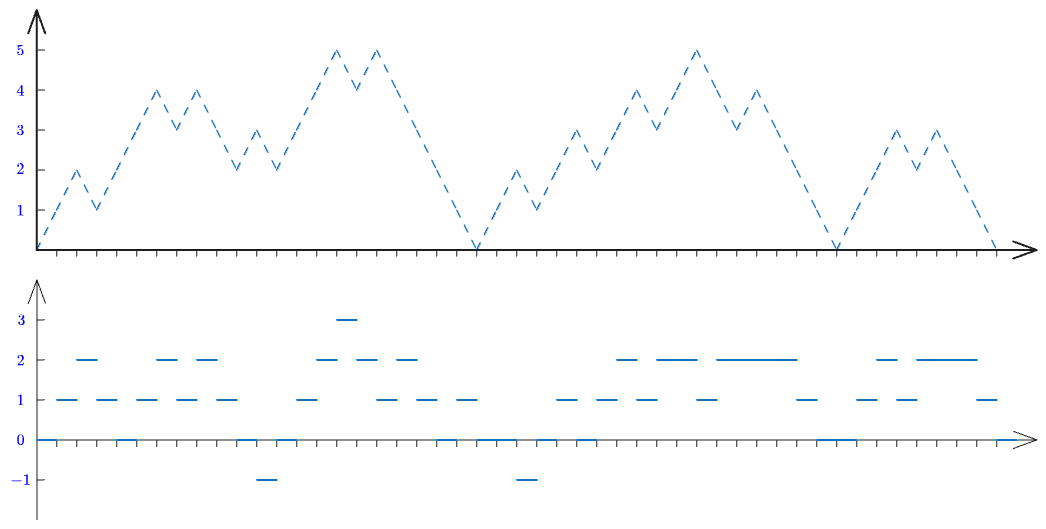}
    \caption{\centering Top: the contour process of the walk $W$ (or its underlying tree) from Figure~\ref{fig:ExampleSubordination}, extended by linear interpolation. Bottom: The spatial contour process of the same walk $W$, without linear interpolation.}
    \label{fig:ContourAndSpatialContour}
\end{figure}
    \item We may also encode $\mathbf{w}$ with a collection of evolving paths (which motivated the name \emph{snake}). More precisely, for all $k \in \llbracket 0, 2\#\mathbf{t}\rrbracket$, consider the random path $\mathbf{w}_{\llbracket \varnothing,u_k\rrbracket}=(\mathbf{w}(v))_{v \in \llbracket \varnothing,u_k\rrbracket}$ as a tuple of length $\lvert u_k\rvert+1 = C_{\mathbf{w}}(k)+1$. Then the collection $\left( \mathbf{w}_{\llbracket \varnothing,u_k\rrbracket} \right)_{0 \leq k \leq 2\#\mathbf{t}}$ characterizes $\mathbf{w}$.
\end{enumerate}

We stress that those two encodings are equivalent. In particular, it is simple to reconstruct $(C_{\mathbf{w}},C_{\mathbf{w}}^{sp})$ from $\left( \mathbf{w}_{\llbracket \varnothing,u_k\rrbracket} \right)_{0 \leq k \leq 2\#\mathbf{t}}$: the contour process is given by the length of those tuples, while the spatial contour process is given by the ending values of those tuples (and is sometimes referred to as the \emph{head of the snake} because of this).

\subsection{Discrete subordination}\label{ssct:DiscreteSubordination}

We now define the notion of subordination in this discrete framework. Following the presentation developed by Le Gall \cite{LeGall2017SubordinationTreesBrownian} for the continuous setting, we first give a general definition and then focus on a specific subordination related to snakes. To this aim, we first introduce a small extension of plane trees that will be convenient here.

\paragraph{Plane trees with edge lengths.}

A plane tree with edge lengths $(\mathbf{t},\delta)$ simply consists of a finite plane tree $\mathbf{t}$ together with a collection $(\delta_e)_{e \text{ edges in } \mathbf{t}}$ of positive numbers. It is naturally equipped with the metric 
$$\delta(u,v)=\sum_{e \text{ edges in } \llbracket u, v\rrbracket} \delta_e,$$
and also has a contour process,  defined from the contour exploration $u_0,u_1,\ldots, u_{2\#\mathbf{t}}$ of $\mathbf{t}$ by setting $C_{(\mathbf{t},\delta)}(k)= \delta(\varnothing,u_k)\ \forall k \in \llbracket 0, 2\#\mathbf{t}\rrbracket$.

Note that the contour process encodes the tree $\mathbf{t}$ (as it is the Dyck path $C_{\mathbf{t}}$ with the vertical lengths distorted) but also the distance $\delta$ in an explicit way: consider the contour exploration $u_0,u_1,\ldots,u_{2\#\mathbf{t}}$ of $\mathbf{t}$, then for all $i<j$ the last common ancestor of $u_i$ and $u_j$ is at distance $\min_{\llbracket i,j \rrbracket} C_{(\mathbf{t},\delta)}$ from the root, hence the distance between $u_i$ and $u_j$ is given by
\begin{equation}\label{eq:DistanceFromContour}
    \delta(u_i,u_j) = C_{(\mathbf{t},\delta)}(i)+C_{(\mathbf{t},\delta)}(j) - 2\min_{\llbracket i,j \rrbracket} C_{(\mathbf{t},\delta)}.
\end{equation}

Finally, we also want to consider $(\mathbf{t},\delta)$ where some edges $e$ have length $\delta_e=0$. However, this means that some vertices $u\neq v \in \mathbf{t}$ may be at distance $0$, in which case we want to identify them by contracting the zero-length edges between them. To do so, we define the \emph{standard contour process} of $(\mathbf{t},\delta)$ from $C_{(\mathbf{t},\delta)}$  by contracting every horizontal segment of its graph (it corresponds to a zero-length edge) to a single point. This standard contour process defines a unique plane tree with positive edge lengths $(\mathbf{t}^*,\delta^*)$, which is isometric to $(\mathbf{t},\delta)$ when seen as metric spaces. By convention, we set  $(\mathbf{t},\delta)=(\mathbf{t}^*,\delta^*)$, and we call $C_{(\mathbf{t},\delta)}$ a non-standard contour process of this tree.

\paragraph{Subordination of trees and snakes.}
Given a tree $\mathbf{t}$, we say that a function 
$g:\mathbf{t}\rightarrow\R$ is \emph{non-decreasing on} $\mathbf{t}$ if 
\begin{equation*}
    g(u)\leq g(v)\quad\text{ for any $u,v\in\mathbf{t}$ with $u\preceq v$.}
\end{equation*}

We may then consider the subordinate tree of $\mathbf{t}$ with respect to any non-decreasing function $g$.

\begin{definition}
The subordinate tree of $\mathbf{t}$ with respect to a non-decreasing function $g$, denoted by $S(\mathbf{t},g)$, is the tree with edge lengths $(\mathbf{t},\Delta g)$ where for all edge $e$ from $u \in \mathbf{t}$ to $ v \in\mathbf{t}$ we set $\Delta g(e) = g(v)-g(u)\geq0$. 
\end{definition}

Keep in mind that when $g(v)=g(u)$, the corresponding edge is contracted into a single vertex. In particular, $g$ becomes a (strictly) increasing function on $S(\mathbf{t},g)$.

Note that $S(\mathbf{t},g)$ has a simple characterization: consider the contour exploration $u_0,u_1,\ldots,u_{2\#\mathbf{t}}$ of $\mathbf{t}$ as defined in Section~\ref{ssct:FormalismTreeSnake}, then  
$$i \in \llbracket 0, 2\#\mathbf{t}\rrbracket \mapsto g(u_i) \text{ is a (possibly non-standard) contour process of } S(\mathbf{t},g).$$

In this paper, we only consider the non-decreasing functions arising as running maximum of tree-indexed walk, defined as follows. Given a tree $\mathbf{t}$, consider a walk $\mathbf{w}$ indexed by $\mathbf{t}$, which is a function $\mathbf{w}:\mathbf{t}\rightarrow\Z$. For every $u\in\mathbf{t}$, we define
\begin{equation}\label{eq:fonction:croissance}
    \overline{\mathbf{w}}(u)=\max_{v\preceq u}\mathbf{w}(v), 
\end{equation}
which clearly is a non-decreasing function on $\mathbf{t}$. The corresponding subordinate tree $S(\mathbf{w}):=S(\mathbf{t},\overline{\mathbf{w}})$ is called the subordinate tree of the snake $\mathbf{w}$. 

According to the previous remark, $S(\mathbf{w})$ can be conveniently expressed with the path representation $\left( \mathbf{w}_{\llbracket \varnothing,u_k\rrbracket} \right)_{0 \leq k \leq 2\#\mathbf{t}}$ of $\mathbf{w}$:
\begin{equation}\label{eq:ContourSubordinate}
    \left( \max\mathbf{w}_{\llbracket \varnothing,u_k\rrbracket} \right)_{0 \leq k \leq 2\#\mathbf{t}} \text{ is a (possibly non-standard) contour process of } S(\mathbf{w}).
\end{equation}

See Figure~\ref{fig:StandardAndNonStandardContourSubordinate} to compare this contour process to the standard one. Also observe that when $v$ is a child of $u$,  $\overline{\mathbf{w}}(v) > \overline{\mathbf{w}}(u)$ means that  $\mathbf{w}(v) > \mathbf{w}(u')$ for any $u' \prec v$, i.e. $v$ is a \emph{record} of $\mathbf{w}$ as defined in Introduction. Hence, all edges that do not lead to a record are contracted in $S(\mathbf{w})$, and $\#S(\mathbf{w})+1$ is the number of records in $\mathbf{w}$.

Finally, in the rest of this paper, we will almost only consider walks $\mathbf{w}$ such that $\mathbf{w}(v)-\mathbf{w}(u)\leq1$ when $v$ is a child of $u$ (due to Assumption~\ref{assum:JumpAtMost1}). In this case we have further simple relations between $S(\mathbf{w})$ and $\mathbf{w}$. Indeed, this implies that $\overline{\mathbf{w}}$'s increments take values in $\{0,1\}$ and $S(\mathbf{w})$ is simply obtained by contracting the edges where $\overline{\mathbf{w}}$ remains constant. We thus have
\begin{equation}\label{eq:generationRecords}
    Z_k\left(S(\mathbf{w})\right)=\#\{u\in\mathbf{t},\text{ $u$ is a record of }\mathbf{w}\text{ and }\mathbf{w}(u)=k\},
\end{equation}
and
\begin{equation}\label{eq:lien:hauteur}
    \max_{u\in\mathbf{t}}\mathbf{w}(u)=H\bigl(S(\mathbf{w})\bigr).
\end{equation}

\begin{figure}
    \centering
    \includegraphics[width = \linewidth]{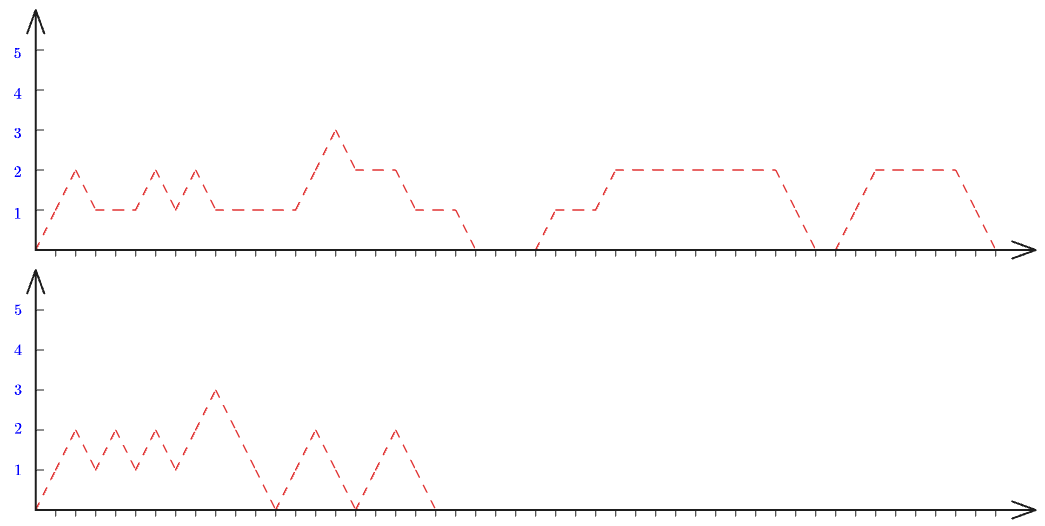}
    \caption{\centering Top: Non-standard contour process of $S(W)$ obtained by following the running maximum of $W$ along its branches. Bottom: Standard contour process of $S(W)$, obtained by contracting all horizontal edges of the non-standard process. In both cases, the tree-indexed walk $W$ is from Figure~\ref{fig:ExampleSubordination} and the contour processes are extended by linear interpolation.}
    \label{fig:StandardAndNonStandardContourSubordinate}
\end{figure}

\subsection{Convergence of trees and snakes}\label{ssct:ConvTreeSnake}

The main point of the encoding processes introduced earlier is that it is sufficient to study their convergence to obtain that the corresponding trees or snakes also converge. We first introduce quickly the notion of convergence of trees before focusing on this sufficient condition (see e.g. \cite{LeGall2005RandomTreesApplications} for a more complete discussion). Then, based on \cite{MarckertMokkademStatesSpacesSnake}, we present the frameworks for the convergence of snakes and explain why it is again sufficient to study the convergence of the contour and spatial contour processes to obtain convergence in a seemingly stronger sense.

\paragraph{Convergence of trees via contour functions.}

A finite tree $\mathbf{t}$ is a graph, hence we may consider its graph distance $d_{\mathbf{t}}$ and see it as a \emph{compact metric space} $(\mathbf{t},d_{\mathbf{t}})$. We may then rescale it by defining $\lambda \mathbf{t}$ as the metric space $(\mathbf{t},\lambda\times d_{\mathbf{t}})$ where all distance are multiplied by $\lambda \in \R_+$. The same holds for finite trees with edge lengths. We further equip the set of (isometry class of) compact metric spaces with the Gromov-Hausdorff distance $d_{GH}$. Thanks to this point of view, we say that a compact metric space $\cT$ is a scaling limit of a sequence of trees $(\mathbf{t}_n)_n$ if there is a sequence of positive numbers $(\lambda_n)_n$ going to $0$ such that 
$$
d_{GH}(\lambda_n \mathbf{t}_n, \cT) \to 0 \text{ as } n \to +\infty.
$$

Actually, not all metric spaces may be realized as a scaling limit of some trees. The possible limits are called $\R$-trees and have several characterizations, but in this paper we will only focus on their definition through a contour function. The main idea is to mimic \eqref{eq:DistanceFromContour}, which tells us how to recover the distance in the tree from a contour, but in a continuous setting. Consider $h \in C_c(\R_+,\R_+)$ such that $h(0)=0$. Then define on its compact support $\mathrm{supp}(h)=[0,\zeta]$ the pseudo-distance
\begin{equation*}
    \forall s <t \in [0,\zeta],\  d_h(s,t) = h(s) +h(t) - 2 \min_{[s,t]}h.
\end{equation*}

We may have $s<t$ such that $d_h(s,t)=0$, but as soon as we consider $\cT_h$ the quotient space of $[0,\zeta]$ by the equivalence relation $s \sim_h t \Leftrightarrow d_h(s,t)=0$, $d_h$ becomes a distance on $\cT_h$ which makes it a compact metric space. We call $(\cT_h,d_h)$ the $\R$-tree associated with the contour function $h$. However, notice that two contour functions $h \neq g$ may encode two isometric $\R$-trees (which we see as the same $\R$-tree). In particular, for all continuous non-decreasing  $\phi:\R_+ \mapsto \R_+$ such that $\phi(0)=0$ and $\max \phi \geq \max \mathrm{supp}(h)$, $h$ and $h\circ\phi$ encodes the same $\R$-tree.

Finally, the map $h \mapsto \cT_h$ is continuous. More precisely, we have
\begin{equation*}
    d_{GH}(\cT_h,\cT_g) \leq 2\lVert h-g\rVert_{\infty}.
\end{equation*}

As a consequence, we may focus on those contour processes to obtain the scaling limit of a sequence of trees $(\mathbf{t}_n)_n$. Indeed, for every $n$ consider a (possibly non-standard) contour process $C_{n}$ of $\mathbf{t}_n$, and extend it by linear interpolation to see it as a continuous function. If one exhibits some positive numbers $(\lambda_n)_n$, some continuous non-decreasing functions $(\phi_n)_n$ from $\R_+$ to $\R_+$ (such that $\phi_n(0)=0$ and $\max \phi_n \geq \mathrm{supp}(C_n)$) and a contour function $h$ such that 
\begin{equation*}
    \lVert \lambda_n C_{n}\circ \phi_n - h \rVert_{\infty} \to 0,
\end{equation*}
then $\lambda_n \mathbf{t}_n \to \cT_h$ with respect to the Gromov-Hausdorff topology.

\paragraph{Equivalent notions of convergence for snakes.}

Both ways to encode a snake $\mathbf{w}$ may be used to study convergence, but Marckert \& Mokkadem \cite{MarckertMokkademStatesSpacesSnake} proved that these two notions of convergence are actually equivalent. Here, we briefly introduce their framework and their homeomorphism theorem with our notation for a later use.

We first discuss the representation of the snake $\mathbf{w}$ with its contour and spatial contour processes $(C_{\mathbf{w}},C_{\mathbf{w}}^{sp})$, also called \emph{the tours} of the snake by Marckert \& Mokkadem. Since this representation extends the coding of a tree by its contour process, we are interested in the convergence of $C_{\mathbf{w}}$ toward a contour function as defined in the previous paragraph, jointly with the convergence of $C_{\mathbf{w}}^{sp}$ toward a continuous function compatible with this contour. Marckert \& Mokkadem asked for those two continuous functions to be defined on $[0,1]$, but here we only require that they are supported on a common segment, say $[0,\zeta]$. Thus, we define the set of limiting objects, namely the \emph{state space of tours} $\T$, as the subset of $C_c(\R_+,\R_+)\times C_c(\R_+,\R)\times \R_+$ of elements $(h,f,\zeta)$ satisfying:
\begin{itemize}
    \item $h$ is supported on $[0,\zeta]$ and $h(0)=0$;
    \item $f$ is supported on $[0,\zeta]$, $f(0)=0$ and for all $s,s' \geq 0,\ d_h(s,s')=0$ implies $f(s)=f(s')$.
\end{itemize}

Notice that, in light of \eqref{eq:DistanceFromContour}, the process $(C_{\mathbf{w}},C_{\mathbf{w}}^{sp},2\#\mathbf{t})$ extended by linear interpolation is in $\T$.
Finally, the distance on $\T$ is given by 
\begin{equation*}
    d_{\T}\bigl((h,f,\zeta),(h',f',\zeta')\bigr) = \lVert h-h' \rVert_{\infty} \vee \lVert f-f' \rVert_{\infty} \vee \lvert \zeta -\zeta'\rvert.
\end{equation*}

For the representation of a snake $W$ as a collection of paths $\left( W_{\llbracket \varnothing,u_k\rrbracket} \right)_{0 \leq k \leq 2\#\mathbf{t}}$, we want to establish convergence toward a continuous collection of continuous paths. Moreover, we want the limiting objects to preserve the following property of a discrete snake: given the vertices $u_i,u_j$ in the contour exploration of $\mathbf{t}$, we have seen that their last common ancestor is at height $\min_{\llbracket i,j\rrbracket} C_{\mathbf{w}}$, hence we must have $W_{\llbracket \varnothing,u_i\rrbracket}(k)=W_{\llbracket \varnothing,u_j\rrbracket}(k)$ for $k \leq \min_{\llbracket i,j\rrbracket} C_{\mathbf{w}}$.

Considering all this, we define the \emph{state space of snakes} $\W$ as the subset of $C_c(\R_+,\R_+)\times C_c\left(\R_+,C(\R_+,\R)\right) \times \R_+$ of elements $(h,\omega,\zeta)$ satisfying:
\begin{itemize}
    \item $h$ and $\omega$ are supported on $[0,\zeta]$, $h(0)=0$ and $\omega(0)$ is the null function;
    \item for all $s \in [0,\zeta]$, $\omega_s:=\omega(s)$ is a path stopped at $h(s)$, i.e. $\forall t \geq h(s), \omega_s(t)=\omega_s(h(s))$;
    \item for all $s \leq s'$, $\omega_s(t)=\omega_{s'}(t)$ for $t \leq \min_{[s,s']} h$.
\end{itemize}

We endow this space with the distance
\begin{equation*}
    d_{\W}\bigl((h,\omega,\zeta),(h',\omega',\zeta')\bigr) = \lVert h-h' \rVert_{\infty} \vee \max_{s,t \in \R_+^2} \lvert \omega_s(t) - \omega'_s(t) \rvert \vee \lvert \zeta -\zeta'\rvert.
\end{equation*}

We can now state the homeomorphism theorem which says that the two frameworks introduced above are equivalent. Thanks to this result, we can focus on the study of contour and spatial contour, and then switch to the representation as a collection of paths when an application requires this stronger framework.

\begin{theorem}[Homeomorphism theorem, {\cite[Theorem~2.1]{MarckertMokkademStatesSpacesSnake}}]\label{thm:homeo}
For all $(h,\omega,\zeta) \in \W$, define $\mathfrak H(h,\omega,\zeta) = (h,f,\zeta)$ where
\begin{equation*}
    \forall s \in [0,\zeta], f(s) = \omega_s(h(s)).
\end{equation*}
Then $\mathfrak H$ is an homeomorphism from $\W$ to $\T$.
\end{theorem}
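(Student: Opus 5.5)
We have to prove that $\mathfrak H$ is a homeomorphism from $\W$ onto $\T$. The plan has four parts: check that $\mathfrak H$ lands in $\T$, build an explicit inverse, check bijectivity, and then prove continuity of $\mathfrak H$ and of its inverse. The last part, continuity of the inverse, is where the work lies.

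\textbf{$\mathfrak H$ maps $\W$ into $\T$.} Let $(h,\omega,\zeta)\in\W$ and set $f(s)=\omega_s(h(s))$.
\begin{itemize}
\item Support and initial value: $f$ is supported on $[0,\zeta]$ and $f(0)=0$, because $\omega(0)$ is the null path.
\item Compatibility with $h$: suppose $d_h(s,s')=0$ with $s\le s'$. Then $h(s)=h(s')=m:=\min_{[s,s']}h$. The third axiom of $\W$ gives $\omega_s(m)=\omega_{s'}(m)$, so $f(s)=f(s')$.
\item Continuity: $|f(s)-f(s')|\le \lVert\omega_s-\omega_{s'}\rVert_\infty+|\omega_{s'}(h(s))-\omega_{s'}(h(s'))|$. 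The first term is small by continuity of $s\mapsto\omega_s$. The second is small by uniform continuity of the single path $\omega_{s'}$; uniformity in $s'$ follows from compactness of $\{\omega_s\}$, which is the image of a compact set under a continuous map, together with Arzelà--Ascoli.
\end{itemize}

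\textbf{The inverse map.} Given $(h,f,\zeta)\in\T$, we reconstruct the paths by
\[
\omega_s(t)=f\bigl(\sup\{r\le s:\ h(r)=t\}\bigr) \quad\text{for } t\le h(s),
\]
extended as a constant for $t\ge h(s)$. Since $h$ is continuous with $h(0)=0$, the set $\{r\le s: h(r)=t\}$ is nonempty for $t\le h(s)$. Call its supremum $r_s(t)$. It is the last time before $s$ at which $h$ equals $t$, so $h\ge t$ on $[r_s(t),s]$ and $d_h(r_s(t),r)=0$ for any other $r$ of the same kind. Consequently $\omega_s(t)=f(r)$ for any $r\le s$ with $h(r)=t=\min_{[r,s]}h$. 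From this characterization we get:
\begin{itemize}
\item the coherence axiom: for $s\le s'$ and $t\le\min_{[s,s']}h$, the time $r_s(t)$ also qualifies for $s'$, so $\omega_s(t)=\omega_{s'}(t)$;
\item the identity $\omega_s(h(s))=f(s)$, by taking $r=s$.
\end{itemize}
Continuity of $t\mapsto\omega_s(t)$ comes from a modulus-of-continuity estimate: if $t<t'\le h(s)$, then $r=r_s(t)$ and $r'=r_s(t')$ satisfy $d_h(r,r')=t'-t$. Since $f$ is constant on $d_h$-classes and $\cT_h$ is compact, $f$ is uniformly continuous with respect to $d_h$. Hence $|\omega_s(t)-\omega_s(t')|\le \varpi_f(t'-t)$, where $\varpi_f$ is the $d_h$-modulus of $f$.

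\textbf{Bijectivity.} Surjectivity is the identity $\omega_s(h(s))=f(s)$ above. Injectivity holds because, for $(h,\omega,\zeta)\in\W$, the coherence axiom forces $\omega_s(t)=\omega_{r}(t)=\omega_r(h(r))=f(r)$ with $r=r_s(t)$; thus $\omega$ is determined by $(h,f)$.

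\textbf{Continuity of $\mathfrak H$.} Take $(h_n,\omega^n,\zeta_n)\to(h,\omega,\zeta)$. Then
\[
|f_n(s)-f(s)|\le \lVert\omega^n_s-\omega_s\rVert_\infty+|\omega_s(h_n(s))-\omega_s(h(s))|,
\]
and both terms tend to $0$ uniformly in $s$, using the equicontinuity of $\{\omega_s\}$ established in the first part.

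\textbf{Continuity of the inverse (main obstacle).} Take $(h_n,f_n,\zeta_n)\to(h,f,\zeta)$ in $\T$. We must compare $f_n(r^n_s(t))$ with $f(r_s(t))$, and last-passage times are not continuous in $h$. The plan is to avoid comparing these times directly. Set $r=r^n_s(t)$, the last time before $s$ with $h_n=t$. Then $|h(r)-t|\le\varepsilon_n:=\lVert h_n-h\rVert_\infty$ and $\min_{[r,s]}h\ge t-\varepsilon_n$. Choose $\tilde r\in[r,s]$ where $h$ attains its minimum over $[r,s]$. Then $d_h(r,\tilde r)\le 2\varepsilon_n$, so $|f(r)-f(\tilde r)|\le\varpi_f(2\varepsilon_n)$. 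Moreover $\tilde r$ is a valid time for computing $\omega_s(h(\tilde r))$, and $|h(\tilde r)-t|\le\varepsilon_n$. Combining,
\[
|\omega^n_s(t)-\omega_s(t)|\le \lVert f_n-f\rVert_\infty+\varpi_f(2\varepsilon_n)+\varpi_f(\varepsilon_n),
\]
uniformly in $s,t$, where the last term uses the modulus of $t\mapsto\omega_s(t)$ from the inverse-map step. One must also treat $t$ near $h(s)\wedge h_n(s)$ carefully, since the paths $\omega_s$ and $\omega^n_s$ are stopped at different heights. This stopping issue and the uniformity of the moduli are where I expect the main care to be needed.
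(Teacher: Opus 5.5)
Your proof is correct in substance, but it takes a different route from the paper. The paper proves nothing here: it quotes \cite[Theorem~2.1]{MarckertMokkademStatesSpacesSnake}, which is stated only for $\zeta=1$. It then asserts in the following remark that the version with a free support length $\zeta$ is a direct corollary, presumably through the time change $s\mapsto \zeta s$; this is continuous in $(h,\zeta)$ for $\zeta>0$ by uniform continuity, and the case $\zeta=0$ is trivial. You instead give a self-contained argument valid for every $\zeta$ at once. It rests on the explicit inverse $\omega_s(t)=f(r_s(t))$, where $r_s(t)$ is the last passage time of $h$ at level $t$ before $s$. Every estimate, including the continuity of $\mathfrak H^{-1}$ via the minimiser $\tilde r$, is driven by the single modulus $\varpi_f$ of the limit $f$ with respect to $d_h$. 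The citation buys brevity. Your route avoids both the reduction to $\zeta=1$ and the external reference, and it makes $\mathfrak H^{-1}$ explicit ($\omega_s$ reads $f$ along the ancestral line of $s$), which is the description implicitly used when the paper later composes with $F$.

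One verification is missing. When you construct the inverse, you check coherence, stopping and continuity in $t$, but never that $s\mapsto\omega_s$ is continuous for the uniform norm. This is required for membership in $\W$, and hence for surjectivity. It follows from ingredients you already have. Take $s\le s'$ and $m=\min_{[s,s']}h$; coherence gives $\omega_s=\omega_{s'}$ on $[0,m]$. Your $t$-modulus, with $\varpi_f$ taken nondecreasing so that it survives the stopping, then gives $\lVert\omega_s-\omega_{s'}\rVert_\infty\le\varpi_f(h(s)-m)+\varpi_f(h(s')-m)\le 2\varpi_f(d_h(s,s'))$.

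The difficulties you anticipate at the end are not real obstacles. Only the modulus of the limit enters your bound, so no uniformity in $n$ is needed. For $t>h_n(s)$, write $\omega^n_s(t)=\omega^n_s(h_n(s))$ and apply your bound at level $h_n(s)$. You then pay one extra $\varpi_f(\varepsilon_n)$ when $h_n(s)<h(s)$.
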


\begin{rmk}
\cite[Theorem~2.1]{MarckertMokkademStatesSpacesSnake} actually proves this result when all elements in $\T$ and $\W$ must have $\zeta=1$. Here, we add some flexibility for the support $[0,\zeta]$ of those encoding functions, but given our choice of distance $d_{\T}$ and $d_{\W}$ this somewhat more general result is a direct corollary of \cite[Theorem~2.1]{MarckertMokkademStatesSpacesSnake}.
\end{rmk}

\subsection{Critical Bienaymé trees in a stable domain of attraction}\label{ssct:criticalStableBienaymé}

We recall that a $\pi$-Bienaymé tree $T$, where $\pi$ is a probability distribution on the non-negative integers, is a random tree characterized by the so-called \emph{branching property}: The root has a number $Z_1(T)$ of children distributed according to $\pi$, and given $Z_1(T)$ the subtrees $T^{(1)},\ldots,T^{\left(Z_1(T)\right)}$ are i.i.d. copies of $T$. 

Moreover, $T$ is almost surely finite if and only if $\pi$ has a mean $\leq 1$ (and $\pi \neq \delta_1$), which includes the critical case where $\pi$ has mean $1$.

In this paper, we will mostly manipulate  $\pi$-Bienaymé tree where $\pi \in \mathrm{dom}(\beta)$, i.e. $\pi$ critical and in the domain of attraction of a $\beta$-stable distribution, where $\beta \in (1,2]$. We gather in this section the properties of such offspring distributions useful to us. Note that this requires the notions of slowly varying functions and regularly varying functions, whose definitions and useful properties are recalled in Appendix~\ref{sct:regular}.

As explained in introduction, an offspring distribution $\pi$ is such that $\pi \in \mathrm{dom}(\beta)$ if and only if it satisfies a generalization of the central limit theorem. Namely, given i.i.d. random variables $X_1, X_2,\ldots$ $\pi$-distributed, there is a slowly varying function $\ell_{\pi}$ such that 
\begin{equation}\label{eq:DefAttractionStablebis}
    \frac{\sum_{i=1}^n X_{i} - n}{n^{1/\beta}\ell_{\pi}(n)} \xrightarrow[n \to \infty]{(d)} Y,
\end{equation}
where the limit is a spectrally positive $\beta$-stable law characterized by $\mathbb E(e^{-\lambda Y})=\exp(\lambda^{\beta}), \forall~\lambda~\geq~0$.

We will mostly use the following equivalent characterization (see \cite[Proposition~2.9]{Khanfir2025HortonStrahlerNumberGaltonWatson} for a proof\footnote{Note that there is a typo in the statement of \cite[Proposition~2.9]{Khanfir2025HortonStrahlerNumberGaltonWatson}: $\frac{\alpha-1}{\Gamma(2-\alpha)}$ should be replace by its inverse $\frac{\Gamma(2-\alpha)}{\alpha-1}$.}):
$\pi \in \mathrm{dom}(\beta)$ if and only if $\psi_{\pi}:s \mapsto G_{\pi}(1-s)-(1-s)$ is $\beta$-regularly varying at $0^+$, which means that there is a slowly varying function $L_{\pi}$ such that
\begin{equation}\label{eq:CaracStableGFbis}
    \psi_{\pi}(s) = G_{\pi}(1-s) - (1-s) = s^{\beta}L_{\pi}(1/s).
\end{equation}

The slowly varying functions $\ell_{\pi}$ and $L_{\pi}$ are related as follows (see \cite{Khanfir2025HortonStrahlerNumberGaltonWatson} again).

\begin{lem}
Assume that $\pi \in \mathrm{dom}(\beta)$, then  \(x \mapsto x^{1/\beta}\ell_{\pi}(x)\) is the asymptotic inverse of \(x \mapsto\psi_{\pi}\left( \frac{1}{x} \right)^{-1}=\frac{x^{\beta}}{L_{\pi}(x)}\) at $+\infty$.
\end{lem}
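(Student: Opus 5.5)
The plan is to derive the statement from the classical equivalence, for distributions on $\R_+$, between domains of attraction of stable laws and regular variation of the Laplace transform. We then translate to the centered variable $X-1$. Let $X$ be $\pi$-distributed with mean $1$, and set $\Lambda(\lambda)=\E[e^{-\lambda(X-1)}] = e^{\lambda}G_{\pi}(e^{-\lambda})$. For $S_n=\sum_{i\le n}X_i$ and $a_n := n^{1/\beta}\ell_\pi(n)$, \eqref{eq:DefAttractionStablebis} is equivalent, by the continuity theorem for Laplace transforms of spectrally positive variables (bounded below by $-n/a_n$, which is harmless after a truncation argument), to
\begin{equation*}
n\log \Lambda(\lambda/a_n) \longrightarrow \lambda^{\beta}\quad \text{for every } \lambda\ge 0 .
\end{equation*}
The first step is to expand $\log\Lambda(t)$ as $t\to0^+$. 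Writing $e^{-t}=1-s$ with $s=t+O(t^2)$, we get $\Lambda(t)=e^{t}\bigl((1-s)+\psi_\pi(s)\bigr)=1+\psi_\pi(s)(1+O(t))$. Since $\psi_\pi(s)\ge0$ is of order $s^\beta L_\pi(1/s)\gg s^2$ (for $\beta<2$, or by a direct check when $\beta=2$), this gives $\log\Lambda(t)\sim\psi_\pi(t)$. For this replacement we need $\psi_\pi(s)\sim\psi_\pi(t)$, which follows from regular variation of $\psi_\pi$.

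The second step combines the two. The convergence above reads $n\,\psi_\pi(\lambda/a_n)\to\lambda^\beta$. Taking $\lambda=1$ gives $\psi_\pi(1/a_n)^{-1}\sim n$, that is $a_n^{\beta}/L_\pi(a_n)\sim n$. Together with the fact that $n\mapsto a_n$ is regularly varying of index $1/\beta$ (so that it extends to $x\mapsto x^{1/\beta}\ell_\pi(x)$ on $\R_+$ with $a_{\lfloor x\rfloor}\sim a_x$), this says exactly that $f(x):=x^{\beta}/L_\pi(x)$ satisfies $f(g(x))\sim x$ for $g(x)=x^{1/\beta}\ell_\pi(x)$. The standard asymptotic inversion result for regularly varying functions \cite[Theorem~1.5.12]{BinghamGoldieTeugels1987RegularVariation} then gives that $g$ is the asymptotic inverse of $f$, and conversely $g(f(x))\sim x$. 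Since $\psi_\pi(1/x)^{-1}=x^\beta/L_\pi(x)$ by \eqref{eq:CaracStableGFbis}, this is the claim.

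The main obstacle is justifying $n\log\Lambda(\lambda/a_n)\to\lambda^\beta$ from convergence in distribution. It is cleanest to rely directly on the equivalence already quoted from \cite[Proposition~2.9]{Khanfir2025HortonStrahlerNumberGaltonWatson}, whose proof establishes precisely this Laplace-transform convergence with the normalization $\E[e^{-\lambda Y}]=e^{\lambda^\beta}$. We then only need to read off the relation at $\lambda=1$ and invoke asymptotic inversion.
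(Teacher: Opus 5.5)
Your skeleton is sound. Since $e^{t}(1-s)=1$ when $e^{-t}=1-s$, one has exactly $\Lambda(t)=1+e^{t}\psi_\pi(1-e^{-t})$. Hence $\log\Lambda(t)\sim\psi_\pi(t)$ follows directly. You do not need the comparison $\psi_\pi(s)\gg s^2$, which is in any case false for $\beta=2$ when $\pi$ has finite variance. The evaluation at $\lambda=1$ followed by asymptotic inversion is also right.

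The step that does not hold as justified is the first one. Continuity theorems for Laplace transforms go from convergence of $\E[e^{-\lambda Z_n}]$ to convergence in distribution. You need the converse: that \eqref{eq:DefAttractionStablebis} forces $n\log\Lambda(\lambda/a_n)\to\lambda^\beta$. That requires uniform integrability of $e^{-\lambda Z_n}$ for $Z_n=(S_n-n)/a_n$. The bound $Z_n\ge -n/a_n$ gives nothing, because $n/a_n\to\infty$. Making a truncation argument work would amount to reproving the classical tail characterization of the domain of attraction. Falling back on the internals of the proof of Khanfir's Proposition~2.9 leaves this step resting on a claim that is not stated in the paper.

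You can remove the difficulty by running the argument in the other direction, using only \eqref{eq:CaracStableGFbis}:
\begin{enumerate}
\item Since $f(x)=\psi_\pi(1/x)^{-1}$ is $\beta$-varying with $\beta>0$, choose $b_n\to\infty$ with $n\psi_\pi(1/b_n)\to1$.
\item Your exact expansion and regular variation give $n\log\Lambda(\lambda/b_n)\sim n\psi_\pi(\lambda/b_n)\to\lambda^\beta$ for all $\lambda\ge0$. So $\E[e^{-\lambda(S_n-n)/b_n}]\to e^{\lambda^\beta}$.
\item Now the continuity theorem is used in its valid direction: pointwise convergence on an open interval of $\lambda$ to the Laplace transform of a probability law implies convergence in distribution. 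Hence $(S_n-n)/b_n\to Y$ in distribution.
\item Since also $(S_n-n)/a_n\to Y$ and $Y$ is non-degenerate, the convergence of types theorem gives $a_n/b_n\to c>0$ with $cY$ distributed as $Y$. Comparing Laplace transforms forces $c^\beta=1$, so $a_n\sim b_n$ and $f(a_n)\sim n$.
\item Your inversion step then concludes.
\end{enumerate}
The paper gives no argument for this lemma and simply refers to Khanfir. This route instead makes the lemma a short consequence of the regular-variation characterization \eqref{eq:CaracStableGFbis} (itself imported from Khanfir) plus standard tools.
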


These slowly varying functions influence the asymptotic behavior of $T$ a $\pi$-Bienaymé tree where $\pi \in \mathrm{dom}(\beta)$. In particular, Slack \cite{Slack1968BranchingProcessMean} obtained the following generalization of Kolmogorov's estimate for the survival probability:

\begin{equation}\label{eq:SlackResult}
{\frac{\psi_{\pi}(\P(Z_n(T)>0))}{\P(Z_n(T)>0)} \sim \frac{1}{\beta-1} \frac{1}{n}.
}\end{equation}

As a consequence, $x \mapsto \P(Z_x(T)>0))^{-1}$ is the asymptotic inverse of $x \mapsto \bigl((\beta-1)\frac{\psi_{\pi}\left( x^{-1} \right)}{x^{-1}}\bigr)^{-1}$, and $n \mapsto \P(Z_n(T)>0))$ is a $-(\frac{1}{\beta-1})$-varying function. Combining this observation with the previous lemma enables us to find an asymptotic inverse of the function \(x \mapsto \sqrt{ \frac{x}{x^{1/\alpha}\ell_{\mu}(x)} }\) appearing in Assumption~\ref{assum:JumpTailAssumption}.

\begin{lem}\label{lem:AsymptoticInversionSnake}
Assume that $\pi \in \mathrm{dom}(\beta)$ and $T$ is a $\pi$-Bienaymé tree, then  \(x \mapsto \sqrt{ \frac{x}{x^{1/\beta}\ell_{\pi}(x)} }\) has an asymptotic inverse at $+\infty$ given by 
\begin{equation*}
    x \mapsto \psi_{\pi}\bigl( \P(Z_{\frac{x^2}{\beta-1}}(T)>0) \bigr)^{-1} \sim_{x \to +\infty} \left(\frac{1}{\beta-1}\right)^{\frac{1}{\beta-1}}\frac{x^{2}}{\P(Z_{x^2}(T)>0)}.
\end{equation*}
\end{lem}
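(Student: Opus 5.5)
The proof combines the preceding lemma, which relates $\ell_\pi$ to $L_\pi$ through asymptotic inversion, with Slack's estimate \eqref{eq:SlackResult}, using the standard calculus of asymptotic inverses for regularly varying functions.

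\textbf{Step 1: rewriting $f$.} Set $f(x)=\sqrt{x/(x^{1/\beta}\ell_\pi(x))}$ and $a(x)=x^{1/\beta}\ell_\pi(x)$, so that $f(x)=\sqrt{x/a(x)}$. By the preceding lemma, $a$ is an asymptotic inverse of $b(y)=\psi_\pi(1/y)^{-1}=y^\beta/L_\pi(y)$. Substituting $x\sim b(y)$, that is $y\sim a(x)$, gives
\begin{equation*}
f(x)^2=\frac{x}{a(x)}\sim \frac{b(y)}{y}=\frac{1}{y\,\psi_\pi(1/y)}.
\end{equation*}
This substitution is legitimate because $a$ and $b$ are regularly varying with positive indices, so composing with asymptotically equivalent arguments preserves equivalence (uniform convergence theorem). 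Writing $p=1/y$, we get $f(x)^2\sim p/\psi_\pi(p)$ with $p\sim 1/a(x)$.

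\textbf{Step 2: inverting via Slack.} Slack's result \eqref{eq:SlackResult} gives
\begin{equation*}
\frac{p_n}{\psi_\pi(p_n)}\sim (\beta-1)n, \qquad p_n:=\P(Z_n(T)>0).
\end{equation*}
Hence the map $p\mapsto p/\psi_\pi(p)$, which is $(1-\beta)$-regularly varying at $0$, inverts to $N\mapsto p_{N/(\beta-1)}$ asymptotically. To solve $f(x)=z$ we need $p/\psi_\pi(p)\sim z^2$, which gives $p\sim p_{z^2/(\beta-1)}$. Then $x\sim b(1/p)=\psi_\pi(p)^{-1}$, so the asymptotic inverse of $f$ is
\begin{equation*}
g(z)=\psi_\pi\bigl(\P(Z_{z^2/(\beta-1)}(T)>0)\bigr)^{-1}.
\end{equation*}
Replacing $p$ by an asymptotically equivalent quantity inside $\psi_\pi$ is harmless, since $\psi_\pi$ is $\beta$-regularly varying at $0^+$. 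To confirm that $g$ really is an asymptotic inverse, I would check $f(g(z))\sim z$ directly by plugging in the chain above. Asymptotic inverses are unique up to equivalence, and $f$ is regularly varying with index $\frac{1}{2}(1-1/\beta)>0$, so this check suffices.

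\textbf{Step 3: the simplified form.} Apply Slack's result once more, with $n=z^2/(\beta-1)$:
\begin{equation*}
\psi_\pi(p_n)\sim \frac{p_n}{(\beta-1)n}=\frac{p_n}{z^2}, \qquad\text{so}\qquad g(z)\sim \frac{z^2}{p_{z^2/(\beta-1)}}.
\end{equation*}
Finally, $n\mapsto p_n$ is $-\frac{1}{\beta-1}$-varying, so $p_{z^2/(\beta-1)}\sim(\beta-1)^{1/(\beta-1)}p_{z^2}$. This yields
\begin{equation*}
g(z)\sim \left(\frac{1}{\beta-1}\right)^{\frac{1}{\beta-1}}\frac{z^2}{p_{z^2}},
\end{equation*}
which is the claimed equivalent.

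\textbf{Main obstacle.} The only delicate point is justifying each substitution of asymptotically equivalent arguments inside regularly varying functions. Slack's result is stated only at integer $n$, so I would extend it to real $n$ using the monotonicity of $n\mapsto p_n$ together with regular variation. Every other step is routine bookkeeping with asymptotic inverses.
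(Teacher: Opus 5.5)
Your proposal is correct and follows the route the paper itself indicates in the sentence preceding the lemma. That route combines the preceding lemma (that $x^{1/\beta}\ell_\pi(x)$ is an asymptotic inverse of $\psi_\pi(1/x)^{-1}$) with Slack's estimate read as an asymptotic-inversion statement, and then uses the $-\frac{1}{\beta-1}$-regular variation of $n\mapsto \P(Z_n(T)>0)$ for the simplified form. Your direct check of $f(g(z))\sim z$, together with uniqueness of asymptotic inverses (valid since $g(z)\to+\infty$ and $f$ has positive index), makes the paper's one-line justification rigorous.
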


This shows that the two conditions in Assumption~\ref{assum:JumpTailAssumption} are equivalent.

\subsection{Scaling limits of trees and snakes}\label{ssct:ScalingLimitsTreesSnakes}

We finally introduce the scaling limits of a $\pi$-Bienaymé tree $T$ conditioned to be large (in several sense), namely the so-called $\beta$-stable trees. We also present the scaling limits of the corresponding $T$-snake $W$ in our specific case of centered jumps; different results exist for non-centered snakes.

For every $\beta \in (1,2]$, $\beta$-stable trees arise as scaling of large $\pi$-Bienaymé trees when $\pi \in \mathrm{dom}(\beta)$, and as one could expect they may be defined via contour functions related to (spectrally positive) $\beta$-stable Lévy process. However the construction of this contour function, known as the continuous height excursion associated with a $\beta$-stable Lévy process, is not straightforward. We give a few more details in Appendix~\ref{sct:knownScalingLimits} but refer to \cite{DuquesneLeGall2002RandomTreesLevy} for a complete picture. In this paper, we mainly need the existence of this continuous height excursion $\cH^{exc} \in C_c(\R_+,\R_+)$, but note that it is defined on a measured space $(E,\cE,\N_{\beta})$ where $\N_{\beta}$ is an infinite measure known as the Ito's excursion measure associated with a $\beta$-stable Lévy process.  We can  define various probability measures from this (again, see Appendix~\ref{sct:knownScalingLimits} for formal definitions):
\begin{itemize}
    \item $\N_{\beta}(\cdot\, |\, \max \cH^{exc} \geq 1)$;
    \item $\N_{\beta}(\cdot\, |\, \zeta(\cH^{exc}) \geq 1)$, where $[0,\zeta(\cH^{exc})]$ is the support of  $\cH^{exc}$;
    \item $\N_{\beta}^{(1)}$, which is informally speaking $\N_{\beta}(\cdot \,|\, \zeta(\cH^{exc}) = 1)$ even though $\N_{\beta}(\zeta(\cH^{exc}) = 1)=0$. When $\cH^{exc}$ is distributed under $\N_{\beta}^{(1)}$, we denote it by $\cH^{(1)}$ and call it a \emph{normalized} continuous height excursion.
\end{itemize}

We may then define a \emph{normalized $\beta$-stable tree} as the $\R$-tree $\cT_{\cH^{(1)}}$. In the special case $\beta=2$ it actually is (a multiple of) the Brownian Continuum Random tree introduced by Aldous \cite{Aldous1991ContinuumRandomTreea} when investigating the scaling limits of some combinatorial trees. More generally, Duquesne \cite{Duquesne2003LimitTheoremContour} proved that $\cT_{\cH^{(1)}}$ is the scaling limit of the Bienaymé trees $T_n$ conditioned to have precisely $n$ edges. Note that this result is with respect to the Gromov-Hausdorff topology introduced earlier, but it also holds with respect to a stronger one, namely the Gromov-Hausdorff-Prokhorov topology. We will not detail this here, but it implies that a uniform vertex $U_n$ of $T_n$ also converges in distribution to a uniform point of $\cT_{\cH^{(1)}}$.

We also say that $\cT_{\cH^{exc}}$, where $\cH^{exc}$ is distributed according to $\N_{\beta}(\cdot \,|\, \max \cH^{exc} \geq 1)$, is a \emph{$\beta$-stable tree conditioned to have height at least $1$}. As a corollary of \cite[Proposition 2.5.2]{Duquesne2003LimitTheoremContour}, which deals with convergence of contour functions, this last $\beta$-stable tree arises as scaling limit of Bienaymé trees whose height is conditioned to be large.

\begin{theorem}\label{thm:StableTreeHeight}
Assume that $\pi \in \mathrm{dom}(\beta)$, and consider $T^{H \geq n}$ a $\pi$-Bienaymé tree conditioned on $H(T) \geq n$, i.e. $Z_n(T)>0$. The following convergence in distribution holds with respect to the Gromov-Hausdorff topology.
\begin{equation*}
    \frac{1}{n} T^{H \geq n} \xrightarrow[n \to +\infty]{(d)} \cT_{\cH^{exc}},
\end{equation*}
where $\cH^{exc}$ is distributed according to $\N_{\beta}(\cdot\,|\, \max \cH^{exc} \geq 1)$.
\end{theorem}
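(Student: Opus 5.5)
The statement is Theorem~\ref{thm:StableTreeHeight}, and the plan is to derive it from the convergence of contour functions in \cite[Proposition 2.5.2]{Duquesne2003LimitTheoremContour}, combined with the continuity of the map $h \mapsto \cT_h$ recalled in Section~\ref{ssct:ConvTreeSnake}.

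The first step is to state the input precisely. Let $C_n$ denote the contour process of $T^{H\geq n}$, extended by linear interpolation. Duquesne's result, specialised to conditioning on the height, gives the joint convergence
\[
\Bigl(\tfrac1n C_n\bigl(\lfloor 2a_n t\rfloor\bigr)\Bigr)_{t\geq 0} \xrightarrow{(d)} \bigl(\cH^{exc}_t\bigr)_{t\geq0},
\]
in $C_c(\R_+,\R_+)$ with the uniform topology, together with the convergence $\frac{1}{2a_n}\cdot 2\#T^{H\geq n}\to\zeta(\cH^{exc})$ of the rescaled lifetime. Here $a_n$ is the time scale matched to height $n$. Concretely, $a_n$ is chosen so that $n$ is of order $a_n/(a_n^{1/\beta}\ell_\pi(a_n))$; this is exactly the asymptotic inverse computed in Lemma~\ref{lem:AsymptoticInversionSnake} without the square root. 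The limit $\cH^{exc}$ is distributed according to $\N_\beta(\cdot\,|\,\max\cH^{exc}\geq1)$. If Duquesne's statement is phrased with a different normalisation of $\cH$ (it depends on the constant in the Laplace exponent $\lambda^\beta$), I will check that the conditioning event $\{H(T)\geq n\}$ corresponds exactly to $\{\max\cH^{exc}\geq 1\}$. This is what pins the height scaling to $1/n$ with no extra constant.

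The second step transfers this to trees. The time rescaling $\phi_n(t)=2a_nt$ is continuous, nondecreasing, satisfies $\phi_n(0)=0$, and covers the whole support of $C_n$. Hence $\frac1n C_n\circ\phi_n$ is a contour function of the metric space $\frac1n T^{H\geq n}$, because a time change does not alter the encoded $\R$-tree. By Skorokhod's representation theorem we may assume the convergence of the first step holds almost surely in sup norm. The inequality $d_{GH}(\cT_h,\cT_g)\leq 2\|h-g\|_\infty$ then yields $d_{GH}(\frac1n T^{H\geq n},\cT_{\cH^{exc}})\to0$ almost surely, and therefore the convergence holds in distribution.

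The main obstacle is the first step, namely getting a clean statement of Duquesne's conditioned convergence for a general slowly varying $\ell_\pi$, with the height normalised exactly by $n$. The expected difficulty is a boundary effect: the event $\{H\geq n\}$ must pass to the limit event $\{\max\cH^{exc}\geq1\}$ without losing mass. This requires $\N_\beta(\max\cH^{exc}=1)=0$, which follows from the explicit height tail $\N_\beta(\max\cH^{exc}\geq h)=((\beta-1)h)^{-1/(\beta-1)}$. It also requires that the ratio $\P(H(T)\geq n)/\P(H(T)\geq xn)$ converge to the corresponding ratio under $\N_\beta$, and this follows from Slack's estimate \eqref{eq:SlackResult} together with the regular variation of $n\mapsto\P(Z_n(T)>0)$.
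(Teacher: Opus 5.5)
Your proposal is correct and follows the paper's route: the paper also treats this statement as a direct corollary of \cite[Proposition 2.5.2]{Duquesne2003LimitTheoremContour} on contour functions of trees conditioned on their height, transferred to trees via $d_{GH}(\cT_h,\cT_g)\leq 2\lVert h-g\rVert_{\infty}$ and the invariance of $\cT_h$ under time changes, exactly as in your second step. The boundary issues you flag as the main obstacle are already handled inside Duquesne's proposition, so the citation suffices. Moreover, a multiplicative normalisation of $\cH$ would not affect the Gromov--Hausdorff statement: by the scaling property of $\N_\beta$, it is absorbed by the conditioning on $\max \cH^{exc}\geq 1$ together with a time change.
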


To study scaling limit of random snakes, we also need the existence of a second process $\cS^{exc}$ defined jointly with $\cH^{exc}$ on $(E,\cE,\N_{\beta})$. Just as $\cH^{exc}$ plays the role of the limiting contour process of large trees, $\cS^{exc}$ appears as the limiting spatial contour process of large snakes. Since the spatial displacements look like Brownian motion at large scale, it is actually not difficult to define $\cS^{exc}$ given $\cH^{exc}$ as a Gaussian process (see Appendix~\ref{sct:knownScalingLimits}). The pair $(\cH^{exc},\cS^{exc})$ is sometimes referred to as \emph{the head of the Brownian snake indexed by a $\beta$-stable tree}.

In the $\beta$-stable setting, Marzouk \cite{Marzouk2020ScalingLimitsDiscrete} obtained that the contour process and spatial contour process of a $T_n$-snake, where the centered jump distribution $\theta$ satisfies Assumption~\ref{assum:JumpTailAssumption}, have $(\cH^{(1)},\cS^{(1)})$ as scaling limits. In this paper, we need a similar result under a different condition, we give its statement here and discuss how to deduce it from adaption of existing results in Appendix~\ref{sct:knownScalingLimits}.

First we introduce $k_n$ such that

\begin{equation}\label{eq:time:change}
    \sqrt{\frac{\ell_{\pi}(k_n)}{k_n^{1-1/\beta}}} \sim \frac1n.
\end{equation}

Note that $n\mapsto k_n$ exists, is $\frac{2\beta}{\beta-1}$-varying, and $k_n \sim (\frac{1}{\beta-1})^{\frac{1}{\beta-1}} \frac{n^2}{\P(Z_{n^2}(T)>0)}$ by Lemma~\ref{lem:AsymptoticInversionSnake}.

\begin{prop}\label{prop:ConvergenceSnakesCondMax}
Assume that $\pi \in \mathrm{dom}(\beta)$ and the jump distribution $\theta$ is centered and satisfies Assumption~\ref{assum:JumpTailAssumption} with $\pi$ (resp. $\beta$) instead of $\mu$ (resp. $\alpha$). Let $W_{[n]}$ be distributed as $W$ conditioned on $\max W \geq \sigma_{\theta}n$, and denote by $C_{[n]}$ and $C_{[n]}^{sp}$ its contour and spatial contour processes seen as continuous functions. Then we have the convergence in distribution 

$$\left( \frac{1}{n^2}C_{[n]}(2k_n t), \frac{1}{n \sigma_{\theta}}C_{[n]}^{sp}(2k_n t) \right)_{t \geq 0}\xrightarrow[n\rightarrow\infty]{(d)}\left(\cH^{[\geq1]}_t,\cS^{[\geq1]}_t\right)_{t\geq0},$$
where $\left(\cH^{[\geq1]}_t,\cS^{[\geq1]}_t\right)_{t\geq0}$ is distributed as $\left(\cH^{exc}_t,\cS^{exc}_t\right)_{t\geq0}$ under $\N_{\beta}(\cdot\,|\,\sup\cS^{exc} \geq 1)$ and $k_n$ is defined by \eqref{eq:time:change}. 

In addition, this jointly holds with the convergence of the size of the underlying tree $T_{[n]}$:
\begin{equation*}
    \frac{\#T_{[n]}}{k_n} \xrightarrow[n\rightarrow\infty]{(d)} \zeta\bigl( \cH^{[\geq1]} \bigr).
\end{equation*}
\end{prop}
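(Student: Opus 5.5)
The strategy is to deduce Proposition~\ref{prop:ConvergenceSnakesCondMax} from the unconditioned scaling limit of the whole snake under the excursion measure, and then condition on the continuous event $\{\sup \cS^{exc}\geq 1\}$. The natural starting point is a "forest" or "sequence of i.i.d. trees" version of Marzouk's theorem. Let $W^{(1)},W^{(2)},\dots$ be i.i.d.\ $(\pi,\theta)$-snakes, and concatenate their contour and spatial contour processes. Under Assumption~\ref{assum:JumpTailAssumption}, the rescaled processes
\[
\left(\tfrac{1}{n^2}C(2k_n t),\ \tfrac{1}{n\sigma_\theta}C^{sp}(2k_n t)\right)
\]
should converge to the height process of a $\beta$-stable Lévy process together with its Brownian spatial head. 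Two facts make this the right normalization. First, by \eqref{eq:time:change} and Lemma~\ref{lem:AsymptoticInversionSnake}, $k_n$ is exactly the time scale on which heights are of order $n^2$. Second, the jump assumption guarantees that spatial displacements along branches of length $n^2$ are of order $\sigma_\theta n$, with no big jumps. This forest statement is what Marzouk's proof really gives: his argument for $T_n$ goes through the coding of a forest by a Łukasiewicz path, followed by a Vervaat or absolute-continuity transfer. So I would first state and justify this forest version, mainly by pointing to the tightness and finite-dimensional-marginal arguments of \cite{Marzouk2020ScalingLimitsDiscrete} and \cite{Duquesne2003LimitTheoremContour}. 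In the appendix, the time-scale bookkeeping $k_n\leftrightarrow n^2$ would be carried out via Lemma~\ref{lem:AsymptoticInversionSnake}.

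The second step is to extract the first tree of the forest whose maximum exceeds $\sigma_\theta n$. This tree is distributed exactly as $W_{[n]}$, by independence of the trees. On the continuous side, excursion theory says that the first excursion of the height process with $\sup \cS\geq 1$ has law $\N_\beta(\cdot\,|\,\sup\cS^{exc}\geq 1)$. This event has positive finite $\N_\beta$-measure: finiteness follows because $\sup \cS^{exc}\ge1$ forces the height to be at least some $\eta$ except on a set of small measure, and the height tail $\N_\beta(\max\cH^{exc}\ge \eta)<\infty$ controls the rest.

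The mechanism for transferring the convergence is the one Duquesne uses in \cite[Proposition 2.5.2]{Duquesne2003LimitTheoremContour} for conditioning on height. Consider, in the Skorokhod embedding, the first excursion interval $[g_n,d_n]$ of the discrete contour during which the spatial sup exceeds $1$ after rescaling. I would show that $g_n$, $d_n$ and the rescaled processes on this interval converge to their continuous counterparts. The needed ingredients are as follows.
\begin{enumerate}[(a)]
\item Excursion endpoints are continuity points: the stable height process has no flat or degenerate zeros.
\item The continuous functional "the sup of $\cS$ over an excursion is at least $1$" is a.s.\ continuous at the limit, because $\N_\beta(\sup\cS^{exc}=1)=0$. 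This holds by scaling: $\N_\beta(\sup\cS^{exc}\ge x)$ is a power of $x$, hence continuous in $x$.
\item Small excursions cannot reach level $1$ in the limit, by a uniform equicontinuity (modulus) estimate on the spatial process, which comes from the tightness in step one.
\end{enumerate}
The convergence of $\#T_{[n]}/k_n$ to $\zeta$ is then just $(d_n-g_n)/(2k_n)\to d-g$, jointly with the processes.

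The main obstacle is item (c), together with the check that discrete trees with small height cannot produce the large spatial maximum. Discretely, I would handle this with a union bound over the trees of the forest in a compact time window. A tree of height at most $\varepsilon n^2$ reaches $\sigma_\theta n$ with probability at most $o(\P(\max W\ge\sigma_\theta n))$ uniformly, by a Gaussian or moment bound on branches together with the jump tail of Assumption~\ref{assum:JumpTailAssumption}, much as in Marzouk's tightness proof. If this is too delicate, I would instead condition directly on the height being at least $\eta n^2$ using Theorem~\ref{thm:StableTreeHeight}, obtain joint snake convergence there via the continuous functional (a Brownian snake given the tree), and let $\eta\to0$.
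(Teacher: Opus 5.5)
Your overall route is the paper's. You use a forest version of Marzouk's theorem for i.i.d.\ $(\pi,\theta)$-snakes (the paper's Theorem~\ref{thm:ConvergenceSnakesInfiniteVolume}), observe that the first snake of the sequence whose maximum exceeds $\sigma_\theta n$ is distributed as $W_{[n]}$, and extract the corresponding excursion after a Skorokhod embedding on the time scale $k_n$, as in Duquesne's method.

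\textbf{The gap is in step (a).} You treat it as routine, but it is where the real difficulty lies, and your justification does not work. Uniform convergence on compacts of $\bigl(\frac{1}{n^2}C(2k_n\cdot),\frac{1}{n\sigma_\theta}C^{sp}(2k_n\cdot)\bigr)$ to $(\cH,\cS)$ gives you little here. Combined with $\max_{[0,g^{[\geq1]}]}\cS<1<\max_{[0,d^{[\geq1]}]}\cS$ and $\cH>0$ on $(g^{[\geq1]},d^{[\geq1]})$, it only yields the one-sided bounds $\limsup \zeta_{i_n-1}/(2k_n)\le g^{[\geq1]}$ and $\liminf \zeta_{i_n}/(2k_n)\ge d^{[\geq1]}$.

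Nothing in the convergence of the contour stops the tree $T_{i_n}$ from having a contour that stays at a positive height $o(n^2)$ across $g^{[\geq1]}$ or $d^{[\geq1]}$. Its rescaled contour would then converge to a concatenation of several excursions of $\cH$, and both the limit law and the limit of $\#T_{[n]}/k_n$ would be wrong. The fact that $\cH$ has no flat or isolated zeros is a property of the limit only and cannot exclude this; think of $\cH+\frac1n$ near $g^{[\geq1]}$.

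\textbf{What is missing is a discrete-side input.} You need the joint convergence of the process $I$ counting the trees explored (equivalently, the running infimum of the {\L}ukasiewicz path): $\frac{n^2}{k_n}I_{2k_n\cdot}\to\cI$. Discretely, $I$ increases only between two trees. In the limit, $\cI$ increases exactly on the zero set of $\cH$, hence in every left-neighbourhood of $g^{[\geq1]}$ and every right-neighbourhood of $d^{[\geq1]}$, since zeros accumulate at excursion endpoints. This forces tree boundaries within $o(1)$ of both endpoints, which gives $\zeta_{i_n-1}/(2k_n)\to g^{[\geq1]}$ and $\zeta_{i_n}/(2k_n)\to d^{[\geq1]}$. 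This is exactly why the paper states its forest theorem for the triple $(I,C,C^{sp})$. You should add this component to your first step; it comes with the Duquesne--Le Gall joint convergence of the {\L}ukasiewicz path and the height process.

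\textbf{Step (c) is not an obstacle.} Once the forest convergence holds almost surely and uniformly on compacts, $\max_{[0,g^{[\geq1]}]}\cS<1$ and continuity of $\cS$ give $\max_{[0,g^{[\geq1]}+\varepsilon]}\cS<1$ for some $\varepsilon>0$. So for large $n$ the rescaled spatial contour stays below $1$ on that interval, and no tree ending before $g^{[\geq1]}+\varepsilon$ can reach $\sigma_\theta n$. All small trees in the window are controlled at once, so no union bound or preliminary conditioning on the height is needed.

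Your bound there is also wrong as stated. For fixed $\varepsilon$, a tree of height at most $\varepsilon n^2$ reaches $\sigma_\theta n$ with probability about $c(\varepsilon)\P(\max W\geq\sigma_\theta n)$ with $c(\varepsilon)\to0$, not $o(\P(\max W\geq\sigma_\theta n))$.

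Your item (b) is correct. It supplies exactly the almost-sure facts $\max_{[0,g^{[\geq1]}]}\cS<1<\max_{[0,d^{[\geq1]}]}\cS$ that the argument uses.
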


\section{The maximum of stable branching random walk}\label{sct:TailMaxBRW}

We let aside subordination of snakes in this section and prove Theorem~\ref{thm:TailMaximumBRW}, which deals with the more classical study of the maximum of $W$ a $(\mu,\theta)$-BRW but in our $\alpha$-stable setting where $\mu \in \mathrm{dom}(\alpha)$. To ease notation, we set 
\begin{itemize}
    \item $p(n)=\P(Z_n(T)>0)$,
    \item $q(n)=\P(\max W \geq n)$,
    \item \(\psi_{\mu}(s)=G_{\mu}(1-s)-(1-s)=s^{\alpha}L_{\mu}\left( \frac{1}{s} \right)\).
\end{itemize} 

Our approach to study the tail $q$ of $\max W$ is inspired by both Lalley \& Shao \cite{LalleyShao2015MaximalDisplacementCritical} and Le Gall \& Lin \cite{LeGallLin2015RangeTreeindexedRandom}. Indeed, one of the key ingredients of our proof is that $q$ satisfies a \emph{discrete differential equation}, namely

\begin{equation}\label{eq:DiscreteODETail}
{ \mathbb E\left[ q(n-J) \right] - q(n) = \psi_{\mu} \Bigl( \mathbb E\left[ q(n-J) \right] \Bigr),
}\end{equation}
where we recall that $J$ has the distribution $\theta$. Here the \emph{differential} part comes from the left-hand side, as it
may be interpreted as \(\sigma_{\theta}^{2}/2 \times q''(n)\) for large
\(n\), while the right-hand side is asymptotic to \(\psi_{\mu}(q(n))\), which is roughly \(q(n)^{\alpha}\). 

However, instead of relating this to branching Brownian motion to get a real ODE as Lalley \& Shao and Zhang did, we use the fact that branching random walks have scaling limits (namely, the Brownian snake driven by a stable tree as shown by Marzouk \cite{Marzouk2020ScalingLimitsDiscrete}) to obtain that $q$ is regularly varying. Then, properties of regularly varying functions enable us to integrate \eqref{eq:DiscreteODETail} despite its discreteness. Let us add that the maximum of the Brownian snake driven by a stable tree also satisfies a real ODE similar to \eqref{eq:DiscreteODETail}, see Archer, Carrance \& Ménard \cite{ArcherCarranceMenard2024PropertiesStableSnakes}, but we do not use this either. Our discrete manipulations give an asymptotic characterization of $q$ that we relate to the asymptotic characterization \eqref{eq:SlackResult} of $p$ given by Slack.

To make this formal, we will need a technical lemma which gives a sufficient condition to ensure that taking an expectation preserves the regularity of a given function.

\begin{lem}\label{lem:RegularExpectation}

Consider \(Y\) a real-valued random variable, $f:\mathbb{R} \to \mathbb{R}_+$ a regularly varying function (at $+\infty$) bounded above by $1$ and \(F:\mathbb{R} \to \mathbb{R}\) a measurable function such that
\(\mathbb E\bigl[\lvert F(Y) \rvert\bigr] < +\infty\).  

If
\(\mathbb E\bigl[\lvert F(Y)\rvert\1_{\lvert Y \rvert >x}\bigr]= o_{x \to +\infty} \left( f(x) \right)\),
then
\begin{equation*}{\frac{\mathbb E\bigl[ F(Y)f(x+Y) \bigr]}{f(x)} \xrightarrow[x \to +\infty]{}  \mathbb E\bigl[F(Y)\bigr].
}\end{equation*}

\end{lem}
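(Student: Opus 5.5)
We split the expectation according to whether $\lvert Y\rvert$ is small or large compared to $x$. Fix $\varepsilon\in(0,1)$ and write
\begin{equation*}
\frac{\E[F(Y)f(x+Y)]}{f(x)} = \E\Bigl[F(Y)\frac{f(x+Y)}{f(x)}\1_{\lvert Y\rvert\leq \varepsilon x}\Bigr] + \frac{\E\bigl[F(Y)f(x+Y)\1_{\lvert Y\rvert> \varepsilon x}\bigr]}{f(x)}.
\end{equation*}

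The second term is the easy one. Since $0\leq f\leq 1$, its absolute value is at most $\E[\lvert F(Y)\rvert\1_{\lvert Y\rvert>\varepsilon x}]/f(x)$. The hypothesis gives $\E[\lvert F(Y)\rvert \1_{\lvert Y\rvert>\varepsilon x}]=o(f(\varepsilon x))$. Because $f$ is regularly varying, say of index $\rho$, we have $f(\varepsilon x)/f(x)\to\varepsilon^{\rho}$, which is a finite constant. Hence this term tends to $0$ for every fixed $\varepsilon$.

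For the first term, I would invoke the uniform convergence theorem for regularly varying functions (Appendix~\ref{sct:regular}, or \cite[Theorem~1.5.2]{BinghamGoldieTeugels1987RegularVariation}). It says that $f(\lambda x)/f(x)\to\lambda^{\rho}$ uniformly in $\lambda\in[1-\varepsilon,1+\varepsilon]$, a compact subset of $(0,\infty)$. Applying it with $\lambda=1+Y/x$ gives, on the event $\{\lvert Y\rvert\le\varepsilon x\}$,
\begin{equation*}
\Bigl\lvert \frac{f(x+Y)}{f(x)}-\bigl(1+Y/x\bigr)^{\rho}\Bigr\rvert\le \eta(x)\xrightarrow[x\to+\infty]{}0,
\end{equation*}
with $\eta(x)$ not depending on $Y$. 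Moreover $(1+Y/x)^{\rho}\to 1$ pointwise, and it is bounded by $\max\bigl((1-\varepsilon)^{\rho},(1+\varepsilon)^{\rho}\bigr)$ on that event. Since $\E\lvert F(Y)\rvert<\infty$, dominated convergence then gives that the first term converges to $\E[F(Y)]$. Indeed, the indicator $\1_{\lvert Y\rvert\leq\varepsilon x}$ tends to $1$ pointwise, and the error coming from $\eta(x)$ is at most $\eta(x)\,\E\lvert F(Y)\rvert\to0$.

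Combining the two terms proves the claim; in fact any single $\varepsilon$ suffices, so no limit $\varepsilon\to0$ is needed. The only delicate point is the uniformity step, which needs $f$ to be regularly varying on all of a neighbourhood of $+\infty$ (measurability is standard). If $f$ is only defined on $\N$, as for $p$ and $q$, I would first extend it to a step function. Such an extension is still regularly varying when $\rho\neq0$ or when the sequence has slowly varying increments. With that in place, the uniform convergence theorem applies verbatim.
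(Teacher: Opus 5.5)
Your proof is correct and follows essentially the same route as the paper. You split according to $\lvert Y\rvert\le\varepsilon x$ (the paper takes $\varepsilon=1/2$), bound the far part using $0\le f\le 1$, the tail hypothesis and $f(\varepsilon x)/f(x)\to\varepsilon^{\rho}$, and handle the near part by dominated convergence; the only difference is that you invoke the uniform convergence theorem where the paper uses Potter's bound together with the pointwise limit $f(x+Y)/f(x)\to 1$, which itself rests on that same uniformity. Your closing remark about integer-indexed $f$ is not needed, since the lemma assumes $f$ is defined on $\mathbb{R}$, and in the application $q(x)=\P(\max W\geq x)$ is already a function of a real variable.
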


\begin{proof} 
Note that since \(f\) is
regularly varying we have almost surely \(\frac{f(x+Y)}{f(x)} \to 1\) as
\(x \to +\infty\). To determine the asymptotic of the expectation, we
decompose the ratio in two terms:
\begin{equation*}
{ \frac{\mathbb E\bigl[ F(Y)f(x+Y) \bigr]}{f(x)} = \mathbb E\left[ F(Y) \frac{f(x+Y)}{f(x)}\1_{\lvert Y \rvert \leq \frac{x}{2}}  \right] + \mathbb E\left[ F(Y) \frac{f(x+Y)}{f(x)}\1_{\lvert Y \rvert > \frac{x}{2}}\right].
}\end{equation*} 
By Potter's bound (Proposition~\ref{Regular:Potter}),
\(\frac{f(x+Y)}{f(x)}\1_{\lvert Y \rvert \leq \frac{x}{2}}\) is bounded
for \(x\) large enough so we can apply the dominated convergence theorem
to the first term to see that it converges to
\(\mathbb E\bigl[F(Y)\bigr]\).

For the second term, we simply write \(0 \leq f(x+Y) \leq 1\)
and then the assumption
\(\mathbb E\bigl[\lvert F(Y)\rvert \1_{\lvert Y \rvert >x}\bigr]= o \left( f(x) \right) = o \left( f(2x) \right)\)
is sufficient to prove that this term goes to \(0\).
\end{proof}

Finally, to apply the previous lemma to $q$, we will need a first lowerbound derived from a simple probabilistic observation, which relates the height of the tree to the maximal value of the snake.

\begin{lem}\label{lem:SimpleLowerboundByHeight}

There is \(c>0\) such that \(q(n) \geq c p(n^{2})\).

\end{lem}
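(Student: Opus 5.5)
We prove Lemma~\ref{lem:SimpleLowerboundByHeight} by a second moment argument along a well-chosen set of vertices at generation $n^2$. On the event $\{Z_{n^2}(T)>0\}$, we wish to show that with conditional probability bounded below, some vertex at generation $n^2$ (or some vertex overall) has label at least $n$.

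The simplest route is to fix one vertex. Conditionally on $Z_{n^2}(T)>0$, pick a vertex $u$ at generation $n^2$ in a way that depends only on the tree, for instance the leftmost one. Since the jumps are independent of $T$, $W(u)$ is a sum of $n^2$ i.i.d.\ centered jumps with variance $\sigma_\theta^2$, which is finite by Assumption~\ref{assum:JumpTailAssumption}. By the central limit theorem,
\begin{equation*}
\P\bigl(W(u)\geq n \,\big|\, Z_{n^2}(T)>0\bigr)=\P\Bigl(\sum_{i=1}^{n^2}J_i\geq n\Bigr)\xrightarrow[n\to+\infty]{}\P(\mathcal N(0,\sigma_\theta^2)\geq 1)>0 .
\end{equation*}
Hence, for $n$ large,
\begin{equation*}
q(n)\geq \P\bigl(Z_{n^2}(T)>0,\ W(u)\geq n\bigr)\geq c'\,p(n^2),
\end{equation*}
with $c'$ half of that Gaussian probability. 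For the finitely many small $n$, $p(n^2)>0$ and $q(n)>0$: the latter holds because $\theta$ is centered and nondegenerate, so $\theta(\{1,2,\dots\})>0$ and a path of $n$ positive steps has positive probability given $Z_n(T)>0$. These cases are absorbed by shrinking $c$.

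The only point requiring care is the independence used in the display. The selection of $u$ must be measurable with respect to $T$ alone, so that, given $T$, the labels along $\llbracket\varnothing,u\rrbracket$ form a plain random walk of length $n^2$. This follows from the construction of the snake with jumps $(J_e)$ i.i.d.\ and independent of $T$. I do not expect any real obstacle; finite variance of $\theta$ is all that is used, and the sharper constant is obtained later.
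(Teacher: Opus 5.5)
Your proof is correct and is essentially the paper's argument: the paper also restricts to a single ancestral line of length $n^2$ on $\{Z_{n^2}(T)>0\}$. It applies Donsker's theorem to the running maximum $\max_{0\le k\le n^2}W'_k$ along that line, whereas you apply the CLT to the endpoint $W(u)$, a slightly smaller event that still has a positive limiting probability. Your opening mention of a second moment argument should be dropped since none is used, and your explicit treatment of small $n$ fills in a detail the paper leaves implicit.
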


\begin{proof} 
On the event $\{Z_{n^2}(T) > 0\}$ where $T$ has height at least $n^2$, we may bound $\max W$ from below by the maximum of the walk along a path of size $n^2$. Along this path, $W$ is the same as a classical random walk $(W'_k)_{0 \leq k \leq n^2}$, and by assumption the jumps (distributed as $J$) have a finite variance $\sigma_{\theta}^2$. Hence, we may apply Donsker's theorem to get

\begin{equation*}{
\begin{aligned}
\mathbb P(\max W \geq n) &\geq \mathbb P(\max W \geq n \ |\ Z_{n^2}(T) > 0) \mathbb P(Z_{n^2}(T) > 0) \\
& \geq \mathbb P(\max_{0 \leq k \leq n^2} W'_{k} \geq n)\mathbb P(Z_{n^2}(T) > 0) \\
& \geq \left( \mathbb P(\max_{[0,1]} \sigma_{\theta}B \geq 1) +o(1) \right)\mathbb P(Z_{n^2}(T) > 0),
\end{aligned}
}\end{equation*} 
where $B$ is a standard Brownian motion.
\end{proof}

We now prove the main result of this section.

\begin{proof}[Proof of Theorem~\ref{thm:TailMaximumBRW}]

Let us first establish the discrete differential equation \eqref{eq:DiscreteODETail}. For convenience, note that \(q:x \mapsto \P(\max W \geq x)\) is constant equal to \(1\) on
\(\mathbb{R}_{-}\), so for all \(x\) we have \(q(x_{+})=q(x)\).

For \(n \geq 1\), the branching property of \(T\) enables to express
\(1-q(n) = \mathbb P(\max W < n)\) as follows:
\begin{equation*}
{ 1-q(n) = \mathbb E\! \left[\prod_{i=1}^{Z_{1}} \bigl(1-q(n-J_{i})\bigr) \right] 
= G_{\mu}\Bigl( 1- \mathbb E\left[ q(n-J) \right]\Bigr).
}\end{equation*} 

The desired equation \eqref{eq:DiscreteODETail}, which we recall below, directly follows:

\begin{equation*}
{ \mathbb E\left[ q(n-J) \right] - q(n) = \psi_{\mu} \Bigl( \mathbb E\left[ q(n-J) \right] \Bigr).
}\end{equation*}

We then discuss the regularity of $q$. This function clearly is non-increasing and goes to $0$ as $x$ goes to $+\infty$, but we also argue that $q$ is regularly varying for some index
\(\gamma \leq 0\). Indeed, for all $\lambda > 1$ we have
\begin{equation}\label{eq:RatioTail}
{\frac{q(\lambda x)}{q(x)} = \mathbb P(\max W \geq \lambda x | \max W \geq x),
}\end{equation}
hence this ratio is the cumulative distribution function of \(\text{Law}\left( \frac{1}{x} \max W | \max W \geq x \right)\). Thanks to Proposition~\ref{prop:ConvergenceSnakesCondMax}, which applies with the change of variable $x=\sigma_{\theta}y$ since Assumption~\ref{assum:JumpTailAssumption} holds, we actually know that this distribution converges (toward $\max \cS^{[\geq1]}$, but this does not matter). Thus the ratio in \eqref{eq:RatioTail} converges for almost all $\lambda>1$ with respect to Lebesgue measure. In particular  $q$ satisfies the characterization theorem for regularly varying function (Theorem~\ref{Regular:Charac}) hence $q$ has regular variation and its index $\gamma$ must be non-positive since $q$ converges to $0$.

Thanks to this regularity of $q$, we can apply Lemma \ref{lem:RegularExpectation} with $F=1,\,f=q$ to get that
\(\mathbb E\left[ q(n-J) \right] \sim q(n)\). Note that the sufficient condition required to apply this lemma is satisfied since according to Assumption~\ref{assum:JumpTailAssumption} and Lemma~\ref{lem:SimpleLowerboundByHeight}, we have
\(\mathbb P(\lvert J \rvert \geq n) = o\left( p(n^{2})/n^{2} \right) =o(q(n))\).

Injecting \(\mathbb E\left[ q(n-J) \right] \sim q(n)\) into \eqref{eq:DiscreteODETail}
gives that \(n \mapsto \mathbb E\left[ q(n-J) \right] - q(n)\) is
\(\alpha\gamma\)-varying. To relate this function to a second derivative
of \(q\), we integrate it twice.

First, since $q(n) \to 0$, we observe that
\begin{equation}\label{eq:9960ae}
{\sum_{k \geq n} q(k-J) - q(k) = \sum_{i=1}^{J_{+}} q(n-i) -\sum_{i=0}^{J_{-}-1} q(n+i).
}\end{equation}

Since \(q\) is non-increasing and bounded by \(1\), we also have
\begin{equation*}
{\sum_{k \geq n} \lvert q(k-J) - q(k) \rvert  = \sum_{i=1}^{J_{+}} q(n-i) +\sum_{i=0}^{J_{-}-1} q(n+i) \leq \lvert J \rvert.
}\end{equation*} 
Hence,
\(h(n):= \mathbb E \left[ \sum_{k \geq n} q(k-J) - q(k) \right]\) is
well-defined and by dominated convergence we have
\begin{equation*}
{\sum_{k \geq n} \mathbb E \left[q(k-J) - q(k)\right] = h(n) <+\infty.
}\end{equation*} 

As said above,
\(n \mapsto \mathbb E\left[ q(n-J) \right] - q(n)\) is
\(\alpha\gamma\)-varying, so we must have \(\alpha\gamma \leq -1\). 
Consequently, by Karamata's Theorem~\ref{Regular:Karamata}, we get that \(h\) is \((\alpha\gamma+1)\)-varying.

Now we set
\(H(n) = \left( \sum_{k \geq n} q(k-J) - q(k) \right) -Jq(n)\). Since
\(\mathbb E[J]=0\), we have \(h(n)=\mathbb E\left[ H(n) \right]\) and
from \eqref{eq:9960ae} we also have
\begin{equation}\label{eq:0dd4bc}
{ H(n) = \sum_{i=1}^{J_{+}} \bigl( q(n-i) -q(n) \bigr)  + \sum_{i=1}^{J_{-}-1} \bigl( q(n)-q(n+i) \bigr)  \geq 0.
}\end{equation}

We deduce from \eqref{eq:0dd4bc} and \eqref{eq:9960ae} again that
\begin{equation}\label{eq:8417c9}
{ \sum_{k \geq n} H(k) = \sum_{i=1}^{J_{+}} \sum_{j=1}^{i} q(n-j) + \sum_{i=1}^{J_{-}-1}\sum_{j=0}^{i-1}q(n+j) \leq \frac{\lvert J \rvert (\lvert J \rvert +1)}{2},
}\end{equation}
hence
\(\sum_{k \geq n} h(k) = \mathbb E \left[  \sum_{k \geq n}H(k) \right] < +\infty\)
and we can (again) conclude that \(\alpha\gamma+1 \leq -1\) and that
\(n \mapsto \sum_{k \geq n}h(k)\) is \((\alpha\gamma+2)\)-varying by
Karamata's Theorem. Moreover, since \(q\) is non-increasing we can
weaken \eqref{eq:8417c9} into
\begin{equation}\label{eq:f14126}
{ \frac{J_{+}(J_{+}+1)}{2}q(n)+\frac{J_{-}(J_{-}-1)}{2} q(n+J_{-}) \leq \sum_{k\geq n} H(k) \leq \frac{J_{+}(J_{+}+1)}{2}q(n-J_{+}) + \frac{J_{-}(J_{-}-1)}{2} q(n).
}\end{equation}

Then, observe that by Assumption~\ref{assum:JumpTailAssumption} and Lemma~\ref{lem:SimpleLowerboundByHeight} we have
\(n^2\mathbb P(\lvert J \rvert\geq n) = o(p(n^2))=o(q(n))\). 
With Karamata's Theorem in hand, one readily deduces that
\begin{equation*}
    \mathbb E[J^2\1_{\lvert J \rvert \geq n}] = n^2 \mathbb P(\lvert J \rvert \geq n) + O\left(\sum_{k \geq n} k \mathbb P( \lvert J \rvert \geq k)\right) = o(q(n)) \text{ as } n \to +\infty.
\end{equation*}

Consequently, we can apply Lemma~\ref{lem:RegularExpectation} on both sides of \eqref{eq:f14126} to get
\begin{equation}\label{eq:67b0cf}
{ \frac{1}{q(n)}\sum_{k\geq n}h(k) \xrightarrow[n \to +\infty]{} \mathbb E\left[\frac{J_{+}(J_{+}+1)}{2}+\frac{J_{-}(J_{-}-1)}{2}\right]  = \frac{\sigma_{\theta}^{2}}{2}.
}\end{equation}

Thus, the \(\gamma\)-varying function \(q\) is also
\((\alpha\gamma+2)\)-varying, which means that
\(\gamma=-\frac{2}{\alpha-1}\).

Finally, we can inject \(\gamma=- \frac{2}{\alpha-1}\) into our previous
argument to get a finer asymptotic for \(q\). Indeed, since both
\(\alpha\gamma+1 = - \frac{\alpha+1}{\alpha-1} < 0\) and
\(\alpha\gamma+2 = -\frac{2}{\alpha-1} <0\), Karamata's theorem gives
the following estimates for the two integrations we have done earlier:
\begin{equation*}
{ \sum_{k\geq n}h(k) \sim \frac{-1}{\alpha\gamma+2}nh(n) \sim \frac{1}{(\alpha\gamma+2)(\alpha\gamma+1)} n^{2} \psi_{\mu}(q(n)).
}\end{equation*} 

Combining this with \eqref{eq:67b0cf} yields
\begin{equation*}
{ \frac{\psi_{\mu}(q(n))}{q(n)} \sim \frac{\alpha+1}{(\alpha-1)^2} \left( \frac{\sigma_{\theta}}{n} \right)^2.
}
\end{equation*} 

Now, recall from Slack's result \eqref{eq:SlackResult} that
\(x \mapsto \frac{1}{p(x)}\) is the asymptotic inverse of
\(x \mapsto \left( (\alpha-1) \frac{\psi_{\mu}(x^{-1})}{x^{-1}} \right)^{-1}\),
so we finally get
\begin{equation*}
{ q(\sigma_{\theta}n) \sim p(\frac{\alpha-1}{\alpha+1}n^{2}) \sim \left( \frac{\alpha+1}{\alpha-1} \right)^{\frac{1}{\alpha-1}} p(n^{2}),
}\end{equation*}
which is precisely Theorem~\ref{thm:TailMaximumBRW}.
\end{proof}

\begin{rmk}
This proof relies on the \emph{existence} alone of a scaling limit for $\max W$ condition on $\max W \geq n$, but it is possible to use existing results on this scaling limit to shorten it. Indeed, it was proved by Archer, Carrance \& Ménard \cite[Theorem 1]{ArcherCarranceMenard2024PropertiesStableSnakes} that for all $x>0$,
\begin{equation}\label{eq:ArcherCarranceMenard}
    \N_{\alpha}(\sup\cS^{exc} \geq x)=\left(\frac{\alpha+1}{(\alpha-1)^2}\frac{1}{x^2}\right)^{\frac{1}{\alpha-1}},
\end{equation}
where $\cS^{exc}$ and $\N_{\alpha}$ are defined in Appendix~\ref{sct:knownScalingLimits}. Knowing this, one can directly deduce that the ratio in \eqref{eq:RatioTail} converges to $\lambda^{-\frac{2}{\alpha-1}}$ as $x \to +\infty$, which immediately gives that $q$ has index $-\frac{2}{\alpha-1}$. However, we still need to make the same computations to obtain the exact asymptotic of $q$ and relate it to $\P(Z_{n^2}(T)>0)$, thus we chose to stick with a discrete approach. We only mention \eqref{eq:ArcherCarranceMenard} to stress that Theorem~\ref{thm:TailMaximumBRW} has a continuous counterpart. Indeed, according to \cite[Section~3]{LeGall2017SubordinationTreesBrownian}, we have 
\begin{equation*}
    \N_{\alpha}(\sup\cH^{exc} \geq x)=\left(\frac{1}{\alpha-1}\frac{1}{x}\right)^{\frac{1}{\alpha-1}},
\end{equation*}
where $\cH^{exc}$ is also defined in Appendix~\ref{sct:knownScalingLimits}. Consequently, \eqref{eq:ArcherCarranceMenard} may be written as
\begin{equation*}
    \N_{\alpha}(\sup\cS^{exc} \geq x)=\left(\frac{\alpha+1}{\alpha-1}\right)^{\frac{1}{\alpha-1}}\N_{\alpha}(\sup\cH^{exc} \geq x^2).
\end{equation*}
\end{rmk}

\section{Subordination and records of branching random walk}\label{sct:SubordinationAndRecords}

This section is devoted to the study of the subordinate tree $S(W)$ of a $T$-snake $W$ as defined in Section~\ref{ssct:DiscreteSubordination}. Recall that here $T$ is a critical or subcritical Bienaymé tree and we use centered jumps to define the $T$-walk $W$. We will mainly rely on the facts that $S(W)$ captures the genealogy of the records of $W$ (and also Theorem~\ref{thm:TailMaximumBRW}) to prove the results presented in Section~\ref{ssct:SubordinationBienaymeTrees}. We will finally investigate the relation between $S$ and the \emph{weak} records of $W$ to obtain Corollary~\ref{cor:WeakRecords}. It will be convenient to write $S(T)$ instead of $S(W)$ in this section.

\subsection{Characterization of the subordinate tree}

We first discuss the characterization given by Theorem~\ref{thm:FunctionnalCharacterization} that holds when $T$ is critical or subcritical, and then move to the more specific result Theorem~\ref{thm:NewOffspringStable} where $T$ must be a stable critical Bienaymé tree. Finally, we will end by explaining the explicit Example~\ref{ex:ExampleAndrea}.

\paragraph{General characterization (Theorem~\ref{thm:FunctionnalCharacterization}).} We begin with Proposition~\ref{prop:BienaymePreserved}, which states with almost no assumption that $S$ is also a Bienaymé tree.

\begin{proof}[Proof of Proposition~\ref{prop:BienaymePreserved}] 
Consider the $Z_1(S)$ vertices $u_1,\ldots u_{Z_1(S)}$ in $T$ where $\overline{W}$ becomes strictly positive, i.e. the first records of $W$. By construction of $S$ we have that for all $1 \leq i \leq Z_1(S),\, S^{(i)}=S(T^{(u_i)})$ where we implicitly use the restriction of $W$ on a subtree of $T$ to build its corresponding subordinate tree. But, given those $Z_1(S)$ vertices, we do not have any information on what happens after those vertices. Hence, the corresponding fringe trees $T^{(u_1)},\ldots,T^{(u_{Z_1(S)})}$ are i.i.d. copies of $T$ and the restriction of $W$ to those subtrees can be written as $W(u_1)+W^{(1)},\ldots,W(u_{Z_1(S)})+W^{(Z_1(S))}$  where the $W^{(i)}$ are i.i.d. copies of $W$. Moreover, adding a constant to a tree-indexed walk does not affect the resulting subordinate tree, so $S^{(1)},\ldots,S^{(Z_1(S))}$ are i.i.d. copies of $S$. Finally, this does not depend on the particular choice of those $Z_1(S)$ vertices, so we have the branching property.
\end{proof}

As explained in Section~\ref{ssct:SubordinationBienaymeTrees}, in the following $\nu =\text{Law}(Z_1(S))$ denotes the offspring distribution of $S(T)$. Our goal is to study the properties of $\nu$ depending on the initial offspring distribution $\mu$, but we will only answer this in the case where $\theta$ satisfies Assumption~\ref{assum:JumpAtMost1}. This assumption gives simpler relation between $S$ and $W$, namely \eqref{eq:generationRecords} and \eqref{eq:lien:hauteur}, and together with the branching property it leads to Theorem~\ref{thm:FunctionnalCharacterization} which characterizes $\nu$ and its expectation via a functional equation satisfied by $G_{\nu}$.

\begin{proof}[Proof of Theorem~\ref{thm:FunctionnalCharacterization}]

Consider the jumps $(J_i)_{1 \leq i \leq Z_1(T)}$ corresponding to the edges from the root $\varnothing$ to its children. For every child vertex $i$, either $J_i=1$ in which case $i$ is a record and belongs to the first generation of $S$, or $J_i=-k$ for some $k \geq 0$ and then $i$ is not a record but some of its descendants $v \in T^{(i)}$ may be. In this latter case, because of \eqref{eq:generationRecords}, those records are the first vertices labelled $1$ in $T^{(i)}$ and thus are precisely records of the generation $k+1$ for the branching random walk induced by $T^{(i)}$. This leads to the following decomposition of $Z_1(S)$:
\begin{equation}\label{eq:DecompositionZ1S}
    Z_1(S) = \sum_{i=1}^{Z_1(T)} \left(\mathbb 1_{(J_i=1)} + \sum_{k \geq 0}\mathbb 1_{(J_i=-k)}Z_{k+1}(S(T^{(i)})) \right)= \sum_{i=1}^{Z_1(T)} Z_{1-J_i}(S(T^{(i)})).
\end{equation}

Note that the variables $1-J_i$ are independent and distributed as $Y:=1-J$, and are also independent of the trees $S(T^{(i)})$, hence $Z_{1-J_i}(S(T^{(i)}))$ is distributed as $Z_{Y}(S)$ where $Y$ is independent from $S$. By the branching property of $S$ (Proposition~\ref{prop:BienaymePreserved}) its generating function is $\sum_{k \geq 0}\P(Y=k)G_{\nu}^{\circ k}(s)=\E[G_{\nu}^{\circ Y}(s)]$ for all $s \in [0,1]$. Then, combining \eqref{eq:DecompositionZ1S} with the branching property of $T$ yields
\begin{equation*}
G_{\nu}(s):=\E\left[s^{Z_1(S)}\right] =\E\left[\prod_{i=1}^{Z_1(T)} s^{Z_{1-J_i}(S(T^{(i)}))}\right] =G_{\mu}\bigl(\E\left[s^{Z_{Y}(S)}\right]\bigr) =G_{\mu}\bigl(\E[G_{\nu}^{\circ Y}(s)]\bigr).
\end{equation*}
This establishes the functional equation \eqref{eq:FunctionnalCharacterization}. Before discussing the uniqueness of its solution, we stress that the characterization of the mean of $\nu$, namely $\E[Z_1(S)]$, may be derived from this functional equation, but the easiest way is to take the expectation of \eqref{eq:DecompositionZ1S} and apply the branching properties again to get

\begin{equation*}
    \E[Z_1(S)] = \E[Z_1(T)]\times \E[Z_Y(S)] = m\times\E\left[\E[Z_1(S)]^Y\right].
\end{equation*}

Since $\E[Y]=1$, elementary analysis shows that $f_Y:x \in (0,1] \mapsto \frac{G_Y(x)}{x} \in [1,+\infty)$ is a decreasing bijection and $\E[Z_1(S)]=f_Y^{-1}(\frac1m)$. Moreover, it is clear that $f_Y(1)=1$, hence $\E[Z_1(S)] < 1$ if $m < 1$, else $\E[Z_1(S)]=m=1$.

Finally, to prove that $G_{\nu}$ is the unique generating function solution of \eqref{eq:FunctionnalCharacterization}, we interpret this functional equation as a fixed point equation for $\Phi:E \mapsto E$ defined by $\Phi(f)(s) = G_{\mu}\bigl(\E[f^{\circ Y}(s)]\bigr)$ on the space of functions $E = \left\{ f:[0,1] \mapsto [0,1]\ 1\textrm{-Lipschitz  and such that } f(1)=1 \right\}$. Indeed, recalling that $G_{\mu}$ is $1$-Lipschitz because $\mu$ is critical or subcritical, we see that $\Phi$ is well defined, and since any generating function solution of \eqref{eq:FunctionnalCharacterization} must be $1$-Lipschitz for the same reason, such a solution must be in $E$. We consider the uniform norm $\lVert f \rVert_{\infty} = \max_{[0,1]} \lvert f\rvert$ on $E$ (which makes it a complete space), and we prove that
\begin{equation}\label{eq:contraction}
    \forall f\neq g \in E, \lVert \Phi(f)-\Phi(g)\rVert_{\infty} < \lVert f-g\rVert_{\infty},
\end{equation}
to guarantee the uniqueness of the fixed point of $\Phi$ on $E$.

First, set $\tilde f(x) = \E[f^{\circ Y}(x)]$ and $\tilde g(x) = \E[g^{\circ Y}(x)]$, which are also $1$-Lipschitz.
For a deterministic $k \in\N$, we have 
$$ \vert f^{\circ (k+1)}(x) - g^{\circ (k+1)}(x)\rvert \leq \vert f(f^{\circ k}(x)) - f(g^{\circ k}(x))\rvert + \lvert f(g^{\circ k}(x)) - g(g^{\circ k}(x))\rvert \leq 1\times \lVert f^{\circ k} - g^{\circ k}\rVert_{\infty} + \lVert f-g\rVert_{\infty}.$$

Consequently, $\lVert f^{\circ k} - g^{\circ k}\rVert_{\infty} \leq k\times \lVert f - g\rVert_{\infty}$, and thus we get
\begin{equation*}
    \lVert \tilde f - \tilde g \rVert_{\infty} \leq \E\left[\lVert f^{\circ Y} - g^{\circ Y}\rVert_{\infty}\right] \leq \E[Y]\lVert f - g\rVert_{\infty} \leq \lVert f - g\rVert_{\infty}.
\end{equation*}

It remains to deal with the composition of $G_{\mu}$, since $\Phi(f)=G_{\mu}\circ \tilde f$. If $\tilde f(x)= \tilde g(x)$ then $ \Phi(f)(x)-\Phi(g)(x) = 0$, else $0 < G_{\mu}'(s) < 1$ on the non-trivial open interval between $\tilde f(x)$ and $\tilde g(x)$ and by the mean value theorem we have $\lvert \Phi(f)(x)-\Phi(g)(x) \rvert < \lvert \tilde f(x)-\tilde g(x)\rvert$. We may assume that $\Phi(f) \neq \Phi(g)$ (else \eqref{eq:contraction} trivially holds), in which case the continuous function $\lvert \Phi(f)-\phi(g)\rvert$ reaches its maximum $\lVert \phi(f)-\Phi(g)\rVert_{\infty} > 0$ at some point $x_* \in [0,1]$ , thus according to the previous considerations we must have $\tilde f(x_*) \neq \tilde g(x_*)$ and 
$$\lVert \Phi(f)-\Phi(g)\rVert_{\infty} = \lvert \Phi(f)(x_*)-\Phi(g)(x_*) \rvert < \lvert \tilde f(x_*)-\tilde g(x_*)\rvert \leq \lVert \tilde f - \tilde g \rVert_{\infty}.$$

The last two displays directly gives \eqref{eq:contraction} and the unicity follows.
\end{proof}

\paragraph{The case of critical $\alpha$-stable Bienaymé trees (Theorem~\ref{thm:NewOffspringStable}).}

From now on, we focus on the case where $\mu$ is critical and in the domain of attraction of an $\alpha$-stable distribution, where $\alpha \in (1,2]$. We still require that $\theta$ satisfies Assumption~\ref{assum:JumpAtMost1}, hence we already know that $\nu$ is critical, but in sight of Le Gall's result \cite{LeGall2017SubordinationTreesBrownian}, it is natural to ask whether $\nu$ is also attracted to a stable distribution. Theorem~\ref{thm:NewOffspringStable} answers this positively by establishing a Taylor expansion of $G_{\nu}$ beyond order $1$. 

Our strategy to obtain this expansion is to study $G_{\nu}$ by means of the classical relation for branching processes 
\begin{equation}\label{eq:RecExtinctionBranching}
    \P(Z_{n+1}(S)=0)=G_{\nu}\bigl(\P(Z_{n}(S)=0)\bigr).
\end{equation}

Indeed, we may translate \eqref{eq:RecExtinctionBranching} into a \emph{discrete differential equation} satisfied by $\P(Z_{n}(S)>0)$ and involving $\psi_{\nu}(s):=G_{\nu}(1-s) - (1-s)$. This will be sufficient to obtain an explicit equivalent of $\psi_{\nu}$, since under Assumption~\ref{assum:JumpAtMost1} we have a precise asymptotic of $\P(Z_{n}(S)>0)=\P(\max W \geq n)$ thanks to Theorem~\ref{thm:TailMaximumBRW}.

\begin{proof}[Proof of Theorem~\ref{thm:NewOffspringStable}]
Recall that $\psi_{\nu}(s):=G_{\nu}(1-s) - (1-s)$ is increasing (by criticality of $\nu$) and such that $\psi_{\nu}(0)=0$. Moreover, it is also clear that $q(n):= \P(\max W\geq n) = \mathbb P(Z_n(S)>0)$ decreases toward $0$, and according to \eqref{eq:RecExtinctionBranching} it satisfies the discrete differential equation

$$q(n)-q(n+1) = \psi_{\nu}(q(n)).$$

By Theorem~\ref{thm:TailMaximumBRW} we already know that $q$ is regularly varying with index $-\frac2{\alpha-1}$. Moreover $(q(n)-q(n+1))_n$ is decreasing, thus we can derive $q$ by Monotone Density Theorem  (Theorem~\ref{Regular:MonotoneDensity}) and get 
\begin{equation}\label{eq:asymp}
    \psi_{\nu}(q(n))  \sim \frac2{\alpha-1}\frac{q(n)}n.
\end{equation}

By asymptotic inversion of $q^{-1}$ (see Proposition~\ref{Regular:Inversion}), let $g$ be a regularly varying function \emph{at $0$} of index $-\frac{\alpha-1}2$ such that $n \sim g(q(n))$. We set $$h(x)=\frac2{\alpha-1}\frac{x}{g(x)},$$
which is regularly varying \emph{at $0$} of index $\frac{\alpha+1}2$. We may thus rewrite \eqref{eq:asymp} as

\begin{equation}\label{eq:AsymptoticAlongSubsequence}
    \psi_{\nu}(q(n)) \sim h(q(n)).
\end{equation}

We can extend this into $\psi_{\nu}(s) \sim h(s)$ as $s \to 0$. First, for every $s\in(0,1)$, define $N(s)$ as the unique integer such that $q(N(s)+1) < s \leq q(N(s))$. Observe that $N(s) \to +\infty$ and $q(N(s))\sim q(N(s)+1) \sim s$ as $s\to 0$ because of the monotony and regular variations of $q$. By monotony again, we have 
$$\psi_{\nu}(q(N(s)+1)) < \psi_{\nu}(s) \leq \psi_{\nu}(q(N(s))),$$
but according to \eqref{eq:AsymptoticAlongSubsequence} and the regular variations of $h$, both bounds are equivalent to $h(s)$ as $s\to 0$.

Finally, we can give an explicit expression of $g$ and thus $h$. First recall that $\psi_{\mu}(s)=G_{\mu}(1-s)-(1-s)$. We know from Theorem~\ref{thm:TailMaximumBRW} that $q(n) \sim p\left(\frac{\alpha-1}{\alpha+1}(\frac{n}{\sigma_{\theta}})^{2}\right)$,  and according to Slack's result \eqref{eq:SlackResult},
 \(x \mapsto \left( (\alpha-1) \frac{\psi_{\mu}(x^{-1})}{x^{-1}}\right)\) is the asymptotic inverse of \(x \mapsto \frac{1}{p(x)}\) at $+\infty$. This yields
 
\begin{equation*}
    g(s) \sim_{s \to 0} \sigma_{\theta}\frac{\sqrt{\alpha+1}}{\alpha-1} \left(\frac{\psi_{\mu}(s)}{s} \right)^{-1/2},
\end{equation*}
and 
\begin{equation*}
    h(s)=\frac2{\alpha-1}\frac{s}{g(s)} \sim_{s \to 0} \frac{1}{\sigma_{\theta}} \frac{2}{\sqrt{\alpha+1}} \sqrt{s\psi_{\mu}(s)},
\end{equation*}
which is the desired result since $\psi_{\mu}(s) \sim_{s \to 0} s^{\alpha}L_{\mu}(\frac1s)$.
\end{proof}

\paragraph{An explicit example (Example~\ref{ex:ExampleAndrea}).} The functional equation \eqref{eq:FunctionnalCharacterization} given in Theorem~\ref{thm:FunctionnalCharacterization} characterizes the offspring distribution $\nu$ of the subordinate tree, but given $\mu$ and $\theta$ it seems quite difficult to solve this equation. However, Theorem~\ref{thm:NewOffspringStable} enable us to guess which distribution may be realised as an offspring distribution $\nu$ of $S$, and then to look for $\mu$ and $\theta$ such as \eqref{eq:FunctionnalCharacterization} holds. To simplify this, we fix $\theta$ as the simple symmetric walk in this subsection, i.e. $\theta(1)=\theta(-1)=1/2$, hence we are looking for an offspring distribution $\mu$ such that
\begin{equation}\label{eq:FunctionalCharacInverse}
    \forall u \in [\nu(0),1], G_{\mu}^{-1} (u) = \E \left[ G_{\nu}^{\circ (Y-1)} (u) \right]=\frac{G_{\nu}^{-1} (u)+G_{\nu} (u)}{2}.
\end{equation}
Note that the right-hand side is always invertible on $[\nu(0),1]=[G_{\nu}(0),1]$, but its reciprocal map may or may not be a generating function. Hence we can state the uniqueness of the solution $\mu$ of \eqref{eq:FunctionalCharacInverse} but its existence is not guaranteed. To obtain an example, we chose a case where we can explicitly compute $G_{\mu}$ and checked that it is indeed a generating function. Most computations are actually tractable when we take  $\nu$ as a critical \emph{Power Fractional Laws}. We refer to \cite{AlsmeyerHoang2025PowerfractionalDistributionsBranching} for a survey of those distribution, which are characterized by their generating function
\begin{equation*}
    G_{\nu}(x) = G_{b,\kappa}(x):= 1-\left( \frac{1}{(1-x)^{\kappa}}+b \right)^{-1/\kappa} = 1- \frac{1-x}{(1+b(1-x)^{\kappa})^{1/\kappa}},
\end{equation*}
where $b >0$ and $\kappa \in (0,1]$. Indeed, those generating functions are well behaved with respect to composition as one can check that $G_{b,\kappa}^{\circ k} = G_{kb,\kappa}$ for all $k \in \Z$ (for negative $k$, $G_{kb,\kappa}$ is still defined as above but it is not a generating function anymore). An expansion at $1$ also reveals that such an offspring distribution $\nu$ is critical and attracted to an $(1+\kappa)$-stable law, hence we can fix $\kappa \leq \frac12$ so that $\nu$ may satisfies the conclusion of Theorem~\ref{thm:NewOffspringStable}.

Example~\ref{ex:ExampleAndrea}, where $\nu(n) = \frac{3}{2} \frac{C_{n}}{4^{n}(n+2)}$, actually corresponds to a critical Power Fractional Law with $b=1,\kappa=\frac12$, since identifying $G_{\nu}$ as a hypergeometric series yields

\begin{equation}\label{eq:exAndreaNu}
    G_{\nu}(x) = \sum_{n \in\N} \frac{3}{2\times4^{n}} \frac{(2n)!}{n!(n+2)!} x^n =  \frac{3x-2+2(1-x)^{3/2}}{x^{2}} = 1-\frac{1-x}{(1+\sqrt{ 1-x })^{2}} = G_{1,\frac12}(x).
\end{equation}

\begin{proof}[Proof of Example~\ref{ex:ExampleAndrea}]
By \eqref{eq:exAndreaNu}, $G_{\nu}=G_{1,\frac12}$, hence according to the previous discussion $G_{\nu}^{-1}(x)=G_{-1,\frac12}(x)=1-\frac{1-x}{(1-\sqrt{ 1-x })^{2}}$ and we are looking for a distribution $\mu$ satisfying \eqref{eq:FunctionalCharacInverse}, which amounts to 
\begin{equation*}
    \forall u \in [\nu(0),1], G_{\mu}^{-1} (u)  = \frac{3u-2}{u^2}.
\end{equation*}

Set $x=G_{\mu}^{-1} (u)$, we get that $u \in [\nu(0),1]$ satisfies $xu^2-3u+2=0$ and solving this provide the expression
\begin{equation}\label{eq:exAndreaMu}
    \forall x \in [G_{\mu}^{-1}(\nu(0)),1],\ G_{\mu}(x) = u = \frac{3-\sqrt{ 9-8x }}{2x}.
\end{equation}

This is indeed the generating function of some distribution, since with $C(x) = \sum_{n \geq  0} C_{n} x^{n} =\frac{1-\sqrt{ 1-4x }}{2x}$ we can write $G_{\mu}(x)=\frac{2}{3}C\left( \frac{2}{9} x \right)$. We conclude that $G_{\mu}$ is the power series with coefficient $\mu(n)=\frac{2}{3} \left( \frac{2}{9} \right)^{n} C_{n}$ as claimed.
\end{proof}

\begin{rmk}
For an initial choice of $\nu$ such that $G_{\nu}=G_{b,\kappa}$ and for more general $\theta$, it is still possible to compute $G_{\mu}^{-1}$ as above, however deducing that the corresponding $G_{\mu}$ is indeed a generating function is not as straightforward as in Example~\ref{ex:ExampleAndrea} and actually out of our reach in most cases. Let us just mention that for the same $\theta$ and $\kappa=\frac12$, we can reduce the case $b>0$ to the case $b=1$ but we do not have explicit expressions for $\nu(n)$ and $\mu(n)$ anymore. Then for $b=1,\kappa=\frac12$ and $\theta$ given by $\theta(1)=\theta(-1)=p$ and $\theta(0)=1-2p$, $G_{\mu}$ is explicitly computable but quite tedious to write as we have to solve a cubic equation. Finally, for $\kappa<\frac12$, the equation appearing to invert $G_{\mu}^{-1}$ cannot be algebraically solved hence we were not able to prove the existence of a corresponding $\mu$.
\end{rmk}

\subsection{Application to the study of weak records (Corollary~\ref{cor:WeakRecords})}

In the previous section, we mainly used that $S:=S(W)$ captures the genealogy of records in $W$ to study it. The same approach is available to study the tree $\widetilde{S}:=\widetilde{S}(W)$ that captures the genealogy of \emph{weak records} in $W$. First, this will easily lead to the fact that $\widetilde{S}$ is a Bienaymé tree with some offspring distribution $\widetilde{\nu}$. Then, this approach will give direct relations between $\nu$ and $\widetilde{\nu}$ rather than fixed point equations, hence the behavior of $\widetilde{S}$ stated in Corollary~\ref{cor:WeakRecords} will be deduced from the previous results on $S$.

Formally, we say that $u \in T$ is a weak record for the $T$-walk $W$ when $\forall v \preceq u,\ W(v) \leq W(u)$. We then define $\widetilde{S}(W)$ by contracting (i.e. setting length to $0$) all edges that do not lead to a weak record. With this definition and the ideas used to study $S$, we now prove Corollary~\ref{cor:WeakRecords}. 

\begin{proof}[Proof of Corollary~\ref{cor:WeakRecords}.]
\begin{enumerate}[(i)]
    \item $\widetilde{S}$ has the branching property for the same reasons $S$ has this property. Indeed the proof of Proposition~\ref{prop:BienaymePreserved} also applies to $\widetilde{S}$ as long as we consider the $Z_1(\widetilde{S})$ vertices $\widetilde{u}_1,\ldots, \widetilde{u}_{Z_1(\widetilde{S})}$ that are the first weak records coming after the root (instead of the first records).
    \item To characterize the offspring distribution $\widetilde{\nu}$, i.e. the law of $Z_1(\widetilde{S})$, we decompose $T$ at its root according to the jumps $(J_i)_{i \leq i \leq Z_1(T)}$ from this root to its children, as in the proof of Theorem~\ref{thm:FunctionnalCharacterization}. This time, for every child vertex $i$, either $J_i \in \{0,1\}$ in which case $i$ is a weak record and belongs to the first generation of $\widetilde{S}$, or $J_i=-k$ for some $k \geq 1$ and one has to look to the descendants of $i$ to find a potential weak record. But once restricted to this subtree $T^{(i)}$, the vertices we are interested in are the first ones with labels $0$, hence they correspond to (strong) records in the $T^{(i)}$-walk. This leads to the following decomposition of $Z_1(\widetilde{S})$:
    
    \begin{equation}\label{eq:DecWeakRecords}
        Z_1(\widetilde{S}) = \sum_{i=1}^{Z_1(T)} \left( \1_{J_i \geq 0} + \sum_{k \geq 1} \1_{J_{i}=-k}Z_{k}(S(T^{(i)})) \right) = \sum_{i=1}^{Z_{1}(T)}Z_{(J_{i})_{-}}(S(T^{(i)})), 
    \end{equation}
    where we used that $Z_0(S)=1$ for any subordinate tree. 
    Just as for Theorem~\ref{thm:FunctionnalCharacterization}, by independence and the branching property of $S$ we can deduce the generating function of $Z_1(\widetilde{S})$ from this decomposition. We get 
    \begin{equation}\label{eq:CharacWeakRecords}
        \forall s \in [0,1], G_{\widetilde{\nu}}(s) = G_{\mu}\bigl( \E[G_{\nu}^{\circ J_{-}}(s)] \bigr).
    \end{equation}
    
    Taking the expectation of \eqref{eq:DecWeakRecords} also gives $\E[Z_{1}(\widetilde{S})]= m G_{J_{-}}(\E[Z_{1}(S)])$ where $m=\E[Z_{1}(T)]$ is the mean of $\mu$ and $\E[Z_{1}(S)]$ is the mean of $\nu$ (characterized by Theorem~\ref{thm:FunctionnalCharacterization}). We clearly see that when $m=1$, we have $\E[Z_{1}(S)]=1$ and thus $\E[Z_{1}(\widetilde{S})]=1$ too. On the contrary, when $m < 1$ we get $\E[Z_{1}(\widetilde{S})]<1$.
    
    \item Based on the similarity between \eqref{eq:CharacWeakRecords} and the functional equation characterizing $G_{\nu}$ (Theorem~\ref{thm:FunctionnalCharacterization}), we relate their expansion at $1$. To ease notation, set $\widetilde{\psi}(s)= G_{\widetilde{\nu}}(1-s)-(1-s)$, $\psi(s)= G_{\nu}(1-s)-(1-s)$ and $\psi_{\mu}(s)= G_{\mu}(1-s)-(1-s)$. Those functions are increasing and we know that $\psi_{\mu}$ is $\alpha$-varying at $0$ while $\psi(s)$ is $\frac{\alpha+1}{2}$-varying (Theorem~\ref{thm:NewOffspringStable}). We also set $\gamma(s)=1- \E[G_{\nu}^{\circ (1-J)}(1-s)]$ and $\widetilde{\gamma}(s)=1- \E[G_{\nu}^{\circ J_{-}}(1-s)]$, which are both $\leq s$ by convexity of critical generating functions. Then the equations characterizing $\nu$ and $\widetilde{\nu}$ lead to
    \begin{equation*}
\begin{aligned}
\psi(s) -  \widetilde{\psi}(s)&= G_{\nu}(1-s) - G_{\widetilde{\nu}}(1-s) \\
&=G_{\mu}(1-\gamma(s))-G_{\mu}(1-\widetilde{\gamma}(s)) \\
&= \widetilde{\gamma}(s)-\gamma(s) -\psi_{\mu}(\widetilde{\gamma}(s))+\psi_{\mu}(\gamma(s)) \\
&= \E \bigl[ G_{\nu}^{\circ (1-J)}(1-s) -  G_{\nu}^{\circ J_{-}}(1-s)\bigr] +O(\psi_{\mu}(s)).
\end{aligned}
    \end{equation*}
    
To deal with this expectation, first observe that for any realization of $J$ we always have
\begin{equation*}
    G_{\nu}^{\circ (1-J)}(1-s) -  G_{\nu}^{\circ J_{-}}(1-s) = \1_{J \leq 0} \psi(1-G_{\nu}^{\circ (-J)}(1-s)) \sim_{s \to 0} \1_{J \leq 0} \psi(s),
\end{equation*}
because $\psi$ is regularly varying and $1-G(1-s) \sim s$ as $s \to 0$ for any critical generating function $G$. Moreover, we also have  $1-G(1-s) \leq s$ for $s \in [0,1]$ and $\psi$ is increasing, thus the previous display also gives
\begin{equation*}
    \forall s \in [0,1], \frac{G_{\nu}^{\circ (1-J)}(1-s) -  G_{\nu}^{\circ J_{-}}(1-s) }{\psi(s)} \leq  \1_{_{J \leq 0}}.
\end{equation*}
According to those three last displays, we can apply Lebesgue's dominated convergence theorem to obtain
\begin{equation*}
    \psi(s)-\widetilde{\psi}(s) = \bigl(\P(J \leq 0) + o(1)\bigr)\psi(s) + O(\psi_{\mu}(s)).
\end{equation*}

Since $\alpha > \frac{\alpha+1}{2}$, we have $\psi_{\mu}(s) = o(\psi(s))$ hence we finally get $\widetilde{\psi}(s) \sim_{s \to 0} \theta(1)\psi(s)$. The desired result follows from this and Theorem~\ref{thm:FunctionnalCharacterization}.
\end{enumerate}
\end{proof}

\section{Scaling limits for subordinate trees}\label{sct:ScalinglimitsSubordination}

In this section, we investigate the asymptotic behavior of subordinate trees associated with large random snakes.

\subsection{Conditioning on the maximum of the snake (Corollary~\ref{cor:ScalingLimitSubordinateCondMax})}

We have seen that $S(W)$ is a function of $W$, in particular by \eqref{eq:ContourSubordinate} we know how to express a contour process of $S(W)$ by means of the path representation of $W$. We will argue that this function actually is continuous, hence one may obtain joint convergence result directly from convergence results for $W$. However, the dependence between $W$ and $S(W)$ may not be tractable in general. Hence, even thought we have a joint convergence, the scaling limit of $S(W)$ may not be explicit. 
Corollary~\ref{cor:ScalingLimitSubordinateCondMax} describes this joint convergence and the explicit limit under the specific conditioning $\max W \geq \sigma_{\theta} n$, and this section is devoted to its proof. The reason why this conditioning is convenient is that given Assumption~\ref{assum:JumpAtMost1}, we can use \eqref{eq:lien:hauteur} which says that the condition $\max W \geq k$ is strictly equivalent to $Z_k\left(S(W)\right) > 0$. Thus, in this specific case, the subordinate tree simply is a Bienaymé tree conditioned on its height and its scaling limit is given by Theorem~\ref{thm:StableTreeHeight}.

\begin{proof}[Proof of Corollary~\ref{cor:ScalingLimitSubordinateCondMax}]
Recall that $W_{[n]}$ is distributed as a $(\mu,\theta)$-random snake $W$ conditioned on $\max W \geq \sigma_{\theta}$. We begin with the convergence of  $S(W_{[n]})$ jointly with $(C_{[n]},C_{[n]}^{sp})$, which are the contour and spatial contour processes of $W_{[n]}$. First, define $\overline{C_{[n]}}$ by considering the contour exploration $u_0,u_1,\ldots$ of $T_{[n]}$ and setting
\begin{equation*}
    \forall i \in \llbracket 0, 2\#T_{[n]}\rrbracket,\ \overline{C_{[n]}}(i) = \overline{W_{[n]}}(u_i) = \max_{v \preceq u_i} W_{[n]}(v).
\end{equation*}

According to \eqref{eq:ContourSubordinate}, we know that $\overline{C_{[n]}}$ is a contour process of $S(W_{[n]})$. Then we extend $(C_{[n]},C_{[n]}^{sp},\overline{C_{[n]}})$ by linear interpolation, so that $(C_{[n]},C_{[n]}^{sp},2\#T_{[n]}) \in \T$ where $\T$ is the state space of tours defined in Section~\ref{ssct:ConvTreeSnake}. We may then consider $(C_{[n]},\omega_{[n]},2\#T_{[n]}) = \mathfrak H^{-1}\left(C_{[n]},C_{[n]}^{sp},2\#T_{[n]}\right) \in \W$ where $\mathfrak H$ is the homeomorphism from Theorem~\ref{thm:homeo}. By definition, $\omega_{[n]} \in C_c(\R_+,C(\R_+,\R))$ is a continuous extension of the representation of the snake $W_{[n]}$ by a collection of discrete paths, and one can easily check that 
\begin{equation*}
    \forall s \geq 0, \overline{C_{[n]}}(s) = \max \left(\omega_{[n]}\right)_s.
\end{equation*}

Moreover, the map $F:\W \mapsto C_c(\R_+,\R)$ defined by $F(h,\omega,\zeta)=(s\mapsto \max \omega_s)$ is $1\mathrm{-Lip}$ for $\lVert \cdot \rVert_{\infty}$ and $d_{\W}$. Hence $\overline{C_{[n]}}$ depends on $(C_{[n]},C_{[n]}^{sp},2\#T_{[n]})$ in a continuous way, and the same holds for the renormalized processes:
\begin{equation*}
    \frac{1}{\sigma_{\theta}n}\overline{C_{[n]}}(2k_n\cdot) = F\circ \mathfrak H^{-1}\left( \frac{1}{n^2}C_{[n]}(2k_n \cdot), \frac{1}{n \sigma_{\theta}}C_{[n]}^{sp}(2k_n \cdot), \frac{\#T_{[n]}}{k_n} \right),
\end{equation*}
where $k_n$ was defined by \eqref{eq:time:change}. Our point is that by Proposition~\ref{prop:ConvergenceSnakesCondMax} and continuity of both $F \circ \mathfrak H^{-1}$ and $f \mapsto \cT_f$ (recall Section~\ref{ssct:ConvTreeSnake}) we have

\begin{equation*}
    \left( \frac{1}{n^2}C_{[n]}(2k_n \cdot), \frac{1}{n \sigma_{\theta}}C_{[n]}^{sp}(2k_n \cdot), \frac{1}{\sigma_{\theta}n}S(W_{[n]}) \right) \xrightarrow[n \to +\infty]{(d)} \biggl(\cH^{[\geq1]},\cS^{[\geq1]}, \cT_{F \circ \mathfrak H^{-1}\bigl( \cH^{[\geq1]},\cS^{[\geq1]}, \zeta ( \cH^{[\geq1]} ) \bigr)} \biggr).
\end{equation*}

However, the contour function $\cC = F \circ \mathfrak H^{-1}\bigl( \cH^{[\geq1]},\cS^{[\geq1]}, \zeta ( \cH^{[\geq1]} ) \bigr)$ is not a known contour function associated with a classical scaling limit of trees. Nonetheless, in the particular case of $W_{[n]}$, according to \eqref{eq:lien:hauteur} and Theorem~\ref{thm:NewOffspringStable}, $S(W_{[n]})$ simply is a $\nu$-Bienaymé tree conditioned to have height at least $\sigma_{\theta}n$. Hence, we may apply Theorem~\ref{thm:StableTreeHeight} to deduce that $\cT_{\cC}$ actually is an $\frac{\alpha+1}{2}$-stable tree conditioned to have height at least $1$, which concludes the proof.

\end{proof}

\subsection{Conditioning on the size of the tree (Theorem~\ref{thm:expectedNumberOfRecords})}

We now turn to the more classical case of a $T_n$-indexed random walk $W_n$, where $T_n$ is a $\mu$-Bienaymé tree conditioned to have $n$ edges. In this case, $\#T_n$ is not a deterministic function of $S(W_n)$, hence we cannot construct $S(W_n)$ as a $\nu$-Bienaymé tree satisfying some intrinsic condition and we have to take into account the initial tree $T_n$. The expected size $\E\left[ \#S(W_n)\right]$ is still tractable and the point of this section is to prove Theorem~\ref{thm:expectedNumberOfRecords} which provides its asymptotic.

The main idea to obtain Theorem~\ref{thm:expectedNumberOfRecords} is to consider a random vertex $U_{n}$ in $T_n$ and look at the probability that it realises a record in $W_n$. Indeed, this is directly linked to the expected number of records $\E\left[1+ \#S(W_n)\right]$ but it may also be expressed as the probability that a classical random walk $X = (X_n)_{n \geq 0}$ with jump distribution $\theta$ has a record at position $\lvert U_n \rvert$. 

On the one hand, this height $\lvert U_n \rvert$ is very well understood. To see this, first recall Duquesne's theorem \cite{Duquesne2003LimitTheoremContour}. To ease notation, for the rest of this section, we set

\begin{equation*}
    h_n = \frac{n^{1-1/\alpha}}{\ell_{\mu}(n)}.
\end{equation*}
Duquesne's theorem gives the convergence of $\frac{1}{h_{n}}T_{n}$ toward an $\alpha$-stable tree with respect to the Gromov-Hausdorff-Prokhorov distance. In particular, this implies

\begin{equation}\label{eq:cvHauteurUnif}
    \frac{\lvert U_{n} \rvert}{h_{n}} \xrightarrow[n \to +\infty]{(d)} \mathcal{U}_{\alpha}
\end{equation}
where $\mathcal{U}_{\alpha}$ is the height of a uniformly chosen vertex of this $\alpha$-table tree. In addition, \cite[Theorem~3.3.3]{DuquesneLeGall2002RandomTreesLevy} tells us that $\alpha \times  \mathcal{U}_{\alpha}$ has an explicit density, which may be written as 

$$
x \mapsto \Gamma\left( 1+\frac{\alpha-1}{\alpha} \right) x \cdot\mathrm{ML}_{\frac{\alpha-1}{\alpha}}(x),
$$
where $\mathrm{ML}_{\beta}$ is the density of a Mittag-Leffler distribution associated with a $\beta$-subordinator. This shows that for all $p>-2$, we have

\begin{equation}\label{eq:MomentshauteurUnif}
    \E[\mathcal{U}_{\alpha}^{p}]= \alpha^{-p}\Gamma\left( 1+\frac{\alpha-1}{\alpha} \right) \E\left[ \left( \mathcal{M}_{\frac{\alpha-1}{\alpha}} \right)^{p+1} \right]= \alpha^{-p} \frac{\Gamma\left( 1+\frac{\alpha-1}{\alpha} \right)\Gamma(p+2)}{\Gamma\left( 1+(p+1)\frac{\alpha-1}{\alpha} \right)},
\end{equation}
where $\mathcal{M}_{\beta}$ follows a Mittag-Leffler distribution whose moments are given by \cite[Equation~0.42]{PitmanPicard2006CombinatorialStochasticProcesses}.

On the other hand, the probability that $X$ has a record at a given position $k$ is also well understood (see e.g. \cite[XII.7 Theorem~1a]{Feller1971IntroductionProbabilityTheory}). We recall here its asymptotic, with a simple proof for completeness.

\begin{lem}\label{lem:ClassicalRecord}
Consider $X$ a (classical) random walk whose jump distribution $\theta$ is centered, has a finite variance, and satisfies Assumption~\ref{assum:JumpAtMost1}. The asymptotic of $f_{\theta}(k)=\P(X \text{ has a record at time } k)$ is given by
\begin{equation*}
    f_{\theta}(k) \sim_{k\rightarrow\infty} \frac{\sigma_{\theta}}{\sqrt{ 2\pi }} k^{-1/2}.
\end{equation*}
\end{lem}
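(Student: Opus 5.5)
The plan is to use the classical duality between records and ladder epochs. By time reversal, the walk $X$ has a (strict) record at time $k$ exactly when the reversed walk $X'_j := X_k - X_{k-j}$, $0\le j\le k$, stays strictly positive at times $1,\dots,k$. The reversed walk is again a random walk with jump distribution $\theta$. Hence
\begin{equation*}
f_{\theta}(k) = \P\bigl(X_j > 0 \text{ for all } 1\le j\le k\bigr) = \P(\tau^- > k),
\end{equation*}
where $\tau^-=\inf\{j\ge1: X_j\le 0\}$ is the first weak descending ladder epoch. So the statement reduces to the persistence asymptotic $\P(\tau^->k)\sim \frac{\sigma_\theta}{\sqrt{2\pi}}k^{-1/2}$.

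Next I would identify the constant through the upward ladder height. By the Sparre Andersen / Wiener–Hopf factorisation, for a centered walk with finite variance,
\begin{equation*}
\P(\tau^->k)\sim \frac{1}{\sqrt\pi}\, C\, k^{-1/2}, \qquad C=\exp\Bigl(\sum_{n\ge1}\tfrac1n\bigl(\P(X_n\le 0)-\tfrac12\bigr)\Bigr),
\end{equation*}
which is the content of Feller XII.7. To make $C$ explicit, I would use Assumption~\ref{assum:JumpAtMost1}. Since the walk is skip-free upward, the strict ascending ladder height $H^+$ equals $1$ almost surely, so $\E[H^+]=1$. Spitzer's formula $\E[H^+]\,\E[|H^-_w|]=\sigma_\theta^2/2$, applied to the strict/weak pair, combined with the identity $\E[\tau^+]^{-1}$-type relations, gives $C=\sigma_\theta/\sqrt2 \cdot 1/\E[H^+] = \sigma_\theta/\sqrt2$. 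This yields the claimed $\frac{\sigma_\theta}{\sqrt{2\pi}}k^{-1/2}$.

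A more self-contained alternative avoids tracking Wiener–Hopf constants. Skip-freeness means $X$ hits each level $m\ge1$ for the first time exactly at a record, so the records are the hitting times $T_m=\inf\{j: X_j=m\}$. These form a renewal process with i.i.d. increments distributed as $T_1$. Then
\begin{equation*}
f_\theta(k)=\sum_m \P(T_m=k)
\end{equation*}
is the renewal mass function. One knows $\P(T_1>k)\sim c\,k^{-1/2}$, since $T_1$ is in the domain of attraction of a $\tfrac12$-stable law with $\E[e^{-\lambda T_1}]$ satisfying $1-\E[e^{-\lambda T_1}]\sim \sqrt{2\lambda}/\sigma_\theta$. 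This follows from the Wald martingale / Lagrange-type equation $s\,\E[x^{J}]$ at the root, expanded to second order using finite variance. The strong renewal theorem for infinite-mean arrivals with index $\tfrac12$ (Garsia–Lamperti / Erickson, valid for all indices in $(1/2,1]$ and in fact at $\tfrac12$ under the local form) then gives
\begin{equation*}
\sum_k f_\theta(k)x^k=\frac1{1-\E[x^{T_1}]}\sim \frac{\sigma_\theta}{\sqrt{2(1-x)}}.
\end{equation*}
Since $f_\theta(k)=\P(\tau^->k)$ is monotone in $k$, Karamata's Tauberian theorem with the Monotone Density Theorem (Theorem~\ref{Regular:MonotoneDensity}) gives $f_\theta(k)\sim \frac{\sigma_\theta}{\sqrt2\,\Gamma(1/2)}k^{-1/2}$, the desired constant.

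I would favour this second route: monotonicity from the duality makes the Tauberian step rigorous without any local renewal theorem. The main obstacle is the expansion $1-\E[x^{T_1}]\sim \sqrt{2(1-x)}/\sigma_\theta$. To get it, I would define $\phi(x)=\E[x^{T_1}]$ and use the first-step decomposition $\phi(x)=x\,\E[\phi(x)^{1-J}]$, which holds by skip-freeness since after a jump $J\le1$ one needs $1-J$ successive level passages. Setting $u=1-\phi(x)$, a second-order Taylor expansion of $\E[(1-u)^{1-J}]$ using $\E J=0$ and $\E J^2=\sigma_\theta^2$ gives $1-x\sim \frac{\sigma_\theta^2}{2}u^2$. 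This is the one computation to check carefully, including that $u\to0$ as $x\to 1$.
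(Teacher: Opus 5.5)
Your second route is correct, and it differs from the paper's proof in how the partial-sum asymptotic $\sum_{j\le k}f_\theta(j)\sim\sigma_\theta\sqrt{2k/\pi}$ is obtained. Both arguments rest on the same two inputs: duality, which makes $f_\theta$ non-increasing, and skip-freeness. Both also finish with the Monotone Density Theorem.

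The paper uses skip-freeness in the form ``number of records up to time $k$ equals $1+\max_{j\le k}X_j$''. It then gets $\E[\max_{j\le k}X_j]\sim\sigma_\theta\sqrt{2k/\pi}$ from Donsker's theorem, upgraded to convergence of expectations by Doob's $L^2$ maximal inequality.

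You instead use skip-freeness to identify the records with the first hitting times of the levels $m\ge 1$. This gives the exact identity $\sum_k f_\theta(k)x^k=(1-\phi(x))^{-1}$. You then extract $1-\phi(x)\sim\sqrt{2(1-x)}/\sigma_\theta$ from the first-step equation $\phi(x)=x\,\E[\phi(x)^{1-J}]$; the second-order term is right, since $\E[(1-J)(-J)]=\sigma_\theta^2$. Karamata's Tauberian theorem then converts this into the partial-sum asymptotic.

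What each approach buys:
\begin{itemize}
\item Your route is purely analytic and avoids the invariance principle and uniform integrability.
\item Your functional equation is the linear analogue of the paper's equation for $G_\nu$, with powers of $\phi$ in place of iterates of $G_\nu$.
\item The cost is that you need Karamata's Tauberian theorem, which is not in the paper's appendix.
\item The paper's route is shorter once Donsker's theorem is taken for granted.
\end{itemize}
On the point you flag: $u\to0$ as $x\to1$ holds because $1$ is the only fixed point of $G_Y$ on $[0,1]$ when $\E[Y]=1$ and $\sigma_\theta>0$. Equivalently, the walk oscillates, so $T_1<\infty$ almost surely and $\phi(1^-)=1$.

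Two side remarks.
\begin{itemize}
\item Drop the appeal to a strong renewal theorem. The generating-function identity is exact and monotonicity handles the local step. At index $\frac12$ such a theorem would anyway need an extra local hypothesis on $T_1$ that you do not verify.
\item In your first route, the Sparre Andersen constant has the sign reversed. One has $\sum_{n\ge0}\P(\tau^->n)s^n=\exp\bigl(\sum_{n\ge1}\frac{s^n}{n}\P(X_n>0)\bigr)$, so $C=\exp\bigl(\sum_{n\ge1}\frac1n(\P(X_n>0)-\frac12)\bigr)$. Your formula gives its reciprocal, e.g.\ $\sqrt2$ instead of $1/\sqrt2$ for simple random walk. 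The identification $C=\sigma_\theta/(\sqrt2\,\E[H^+])$ is correct but is asserted rather than derived.
\end{itemize}
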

\begin{proof}
First, by duality we have that $\P(X \text{ has a record at time } k)=\P(X_{j}>0\  \forall j \in \llbracket 1,k \rrbracket  )$, hence $f_{\theta}$ is decreasing. Moreover, because of Assumption~\ref{assum:JumpAtMost1}, the number of records of $X$ up to time $k$ is exactly $1+\max_{0 \leq  j \leq k} X_{j}$. It is then sufficient to prove that

\begin{equation}\label{eq:ClassicalExpectedRecord}
    \sum_{j=1}^k f_{\theta}(j) = \E[1+\max_{0 \leq  j \leq k} X_{j}] \sim \sigma_{\theta}\sqrt{ \frac{2}{\pi} } k^{1/2},
\end{equation}
since by Monotone Density Theorem (Theorem~\ref{Regular:MonotoneDensity}) we can derive this to get Lemma~\ref{lem:ClassicalRecord}.

Notice that by Donsker's theorem, we already have the convergence in distribution

\begin{equation}\label{eq:DonskerMax}
   \frac{1+\max_{0 \leq  j \leq  k} X_{j}}{k^{1/2}} \xrightarrow[]{(d)} \sup_{0 \leq t \leq 1} (\sigma_{\theta}B_{t}) \stackrel{(d)}{=} \sigma_{\theta}\lvert B_{1} \rvert, 
\end{equation}
where $(B_{t})_{t\geq 0}$ is a standard Brownian motion. We can actually reinforce it into a $\L^1$ convergence. Indeed, by Doob's maximal inequality, we have

$$
\E\left[ \left( \frac{\max_{0 \leq  j \leq k} X_{j}}{k^{1/2}} \right)^{2} \right] \leq  4 \frac{\E[X_{k}^{2}]}{k} = 4 \sigma_{\theta}^{2},
$$
which implies that we have uniform integrability and the convergence \eqref{eq:DonskerMax} also holds in expectation. Since $\E[\lvert B_{1} \rvert]=\sqrt{ \frac{2}{\pi} }$, we obtain the desired Equation~\eqref{eq:ClassicalExpectedRecord}.

\end{proof}

With the behaviours of $\lvert U_n \rvert$ and $f_{\theta}$ in hand, we can now prove Theorem~\ref{thm:expectedNumberOfRecords} with the same method used above for Lemma~\ref{lem:ClassicalRecord}. Namely, we will isolate a leading term and a random term that converges in distribution, and then prove that it actually converges in expectation. To this aim, we actually need an additional lemma on $\lvert U_n \rvert$ to ensure uniform integrability.

\begin{lem}\label{lem:UnifBoundHeight}
Assume that $\mu \in \mathrm{dom}(\alpha)$. There is $C=C(\mu)$ such that for all $n \geq 1$ and $y>0$,

$$
\P(1 \leq \lvert U_{n} \rvert \leq h_{n}y ) \leq  C(\mu)y,
$$
where $h_{n}=\frac{n}{n^{1/\alpha}\ell_{\mu}(n)}$ as above.
\end{lem}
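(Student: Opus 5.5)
The idea is to prove the stronger pointwise bound
\begin{equation*}
    \P(\lvert U_n\rvert = k) \leq \frac{C}{h_n} \quad \text{for all } n \geq 1 \text{ and } 1 \leq k \leq n/2,
\end{equation*}
and then sum it over $1 \leq k \leq h_n y$. The range $h_n y > n/2$ needs no work: there $y > n/(2h_n) = a_n/2$, where $a_n = n^{1/\alpha}\ell_{\mu}(n) \to +\infty$, so $y$ is bounded below uniformly in $n$. Enlarging $C(\mu)$ makes $C(\mu)y \geq 1$ in that case, and the bound is trivial.

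\emph{Step 1: exact formula.} Let $X$ have law $\mu$, let $S$ be the random walk with steps $X-1$, and let $\widehat X$ have the size-biased law $k\mu(k)$. Write $\#T=n$ for the conditioning of $T_n$ (the $\pm1$ shifts between vertices and edges are harmless). Then
\begin{equation*}
    \P(\lvert U_n\rvert = k) = \frac{\E[Z_k(T)\1_{\#T = n}]}{(n+1)\,\P(\#T=n)}.
\end{equation*}
I would evaluate the numerator by the spine (size-biased) decomposition. A marked vertex at height $k$ gives a spine of $k$ vertices with i.i.d. offspring $\widehat X_i$. Hanging off the spine is a forest of $M_k=\sum_{i\le k}(\widehat X_i-1)$ independent $\mu$-trees, and this forest must contain exactly $N=n+1-k$ vertices. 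Kemperman's formula then gives
\begin{equation*}
    \E[Z_k(T)\1_{\#T=n}] = \E\Bigl[\frac{M_k}{N}\P(S_N=-M_k \mid M_k)\Bigr].
\end{equation*}
For the denominator, the same formula together with Gnedenko's local limit theorem gives $(n+1)\P(\#T=n) \asymp 1/a_n$.

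\emph{Step 2: reduction.} For $k \leq n/2$ we have $N \asymp n$ and $a_N \asymp a_n$. Since $h_n = n/a_n$, the claimed pointwise bound reduces to
\begin{equation*}
    \E\bigl[M_k\,\P(S_N = -M_k\mid M_k)\bigr] \leq C,
\end{equation*}
uniformly in $N$ and $k$. Because $M_k$ is independent of $S$, it suffices to show $\sup_{N\geq1,\,x\geq0} x\,\P(S_N=-x) \leq C$. This uniform supremum is what makes the argument work even though $\E[M_k]=+\infty$ when $\alpha<2$. I therefore do not need any moment of $M_k$.

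\emph{Step 3 (main obstacle): a uniform bound on $x\,\P(S_N=-x)$.} The walk $S$ has steps bounded below by $-1$, so its left tail is light. Indeed, $\E[e^{-\lambda S_N}] = \exp\bigl(N\log(e^{\lambda}G_\mu(e^{-\lambda}))\bigr)$, and $\log(e^{\lambda}G_\mu(e^{-\lambda}))$ behaves like $\psi_\mu(\lambda) = \lambda^{\alpha}L_\mu(1/\lambda)$ as $\lambda \to 0$ by \eqref{eq:CaracStableGFbis}. A Chernoff bound with $\lambda$ of order $a_N^{-1}$ times a suitable power of $x/a_N$ should give $\P(S_N \leq -x/2) \leq C\exp\bigl(-c(x/a_N)^{\alpha/(\alpha-1)}\bigr)$. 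Potter's bounds are needed here to handle the slowly varying factors uniformly.

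Next I would combine this with the uniform local bound $\sup_z \P(S_m = z) \leq C/a_m$, which follows from Gnedenko's theorem. Split $S_N$ into two independent halves of lengths $\lfloor N/2\rfloor$ and $\lceil N/2\rceil$. If $S_N = -x$, at least one half is $\leq -x/2$, and conditioning on that half gives
\begin{equation*}
    \P(S_N=-x) \leq \frac{2C}{a_N}\exp\bigl(-c(x/a_N)^{\alpha/(\alpha-1)}\bigr).
\end{equation*}
Hence $x\,\P(S_N=-x) \leq 2C\sup_{u\geq0}u\,e^{-cu^{\alpha/(\alpha-1)}} < \infty$. Small values of $N$ only affect the constant.

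Summing the pointwise bound over $1 \leq k \leq h_n y$ then yields $\P(1\leq\lvert U_n\rvert\leq h_n y)\leq C(\mu)\,y$. The delicate point is Step 3: getting the Chernoff estimate uniformly in $N$ with slowly varying corrections. If that proves awkward, a fallback is to prove only a polynomial decay $\P(S_N\leq -x)\leq C(a_N/x)^2$, which still makes $x\,\P(S_N=-x)$ bounded.
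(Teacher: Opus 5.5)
Your proof is correct, but it takes a different route from the paper.

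\textbf{How the two routes differ.} The paper's proof is two lines. It invokes Kortchemski's sub-exponential tail bound $\P(\mathrm{Width}(T_n)\geq u\,n/h_n)\leq C_1e^{-C_2u}$, which gives $\E[\mathrm{Width}(T_n)]\leq C\,n/h_n$. It then uses the crude inequality $\#\{u\in T_n: 1\leq \lvert u\rvert\leq k\}\leq k\,\mathrm{Width}(T_n)$.

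You instead bound the expected profile directly, showing $\sup_{1\leq k\leq n/2}\E[Z_k(T_n)]\leq C\,n/h_n$. Your tools are the many-to-one (spine) decomposition, Kemperman's formula, Gnedenko's local limit theorem and a left-tail estimate for the \L ukasiewicz walk. Only first moments of each $Z_k$ are needed, so controlling $\max_k\E[Z_k(T_n)]$ rather than $\E[\max_k Z_k(T_n)]$ is enough.

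What each approach buys:
\begin{itemize}
\item The paper's argument is much shorter and gives the stronger intermediate bound on $\E[\mathrm{Width}(T_n)]$, but it rests on a deep external tail bound.
\item Your argument is self-contained, using only the local limit theorem, at the cost of more computation.
\item Your pointwise bound $\P(\lvert U_n\rvert=k)\leq C/h_n$ is more informative. If you kept track of the typical size of $M_k$ instead of taking the supremum over $x$, it could plausibly be sharpened towards the $y^2$ order mentioned in the remark after the lemma.
\end{itemize}

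\textbf{Small corrections.}
\begin{itemize}
\item The forest hanging off a spine of length $k$ has $M_k+1$ trees, not $M_k$: the subtree rooted at the marked vertex itself must be included. This is harmless, because you bound everything by $\sup_{x\geq 0}x\,\P(S_N=-x)$.
\item If $\mu$ is lattice, restrict to admissible $n$ and use the lattice version of Gnedenko's theorem. This is needed both for the local bound and for $(n+1)\P(\#T=n)\asymp 1/a_n$.
\item Step 3 is easier than you expect. For $\lambda\leq\log 2$ you have $\E[e^{-\lambda S_N}]\leq \exp\bigl(2N\psi_{\mu}(\lambda)\bigr)$. Take $\lambda=1/a_N$: since $N\psi_{\mu}(1/a_N)\to 1$ (asymptotic inversion), this gives $\P(S_N\leq -y)\leq C e^{-y/a_N}$ uniformly in $N$. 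No optimisation in $\lambda$ and no Potter bounds are needed, and this already makes $x\,\P(S_N=-x)$ uniformly bounded.
\end{itemize}
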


\begin{proof}
This result relies on a similar bound obtained by Kortchemski \cite[Theorem 1]{Kortchemski2017SubexponentialTailBounds} for the width of $T_n$ defined as $\mathrm{Width}(T_n)=\max_{k\in\N} Z_k(T_n)$. With our notation, Kortchemski's bound gives the existence of some constants $C_1,C_2$ such that 

$$
 \P\left( \mathrm{Width}(T_n) \geq u \times  \frac{n}{h_n}
 \right) \leq  C_{1} \exp(-C_{2}u).
$$

This is useful here since we obviously have for every $k\geq 0$, $\#\{ u \in T_n: 1 \leq \lvert u \rvert \leq k \} \leq k \times \mathrm{Width}(T_n)$, hence we may write

$$
\begin{aligned}
\P(1 \leq  \lvert U_{n} \rvert \leq  h_{n}y) &\leq  \frac{yh_{n}\times \E\left[\mathrm{Width}(T_n)\right]}{n}  \\
&\leq y\int_{0}^{+\infty} \P\left( \mathrm{Width}(T_n) \geq u \times  \frac{n}{h_n}
 \right) \dd u \\
 &\leq y \times  C_{1}\int_{0}^{+\infty} \exp(-C_{2}u)\dd  u.
\end{aligned}
$$

This last integral is finite, hence we get Lemma~\ref{lem:UnifBoundHeight}.
\end{proof}

\begin{rmk}
Lemma~\ref{lem:UnifBoundHeight} will be sufficient to compute a first moment here, but it may not be optimal since $\frac{\left\lvert  U_{n}  \right\rvert}{h_{n}}$ converges to $\mathcal{U}_{\alpha}$, whose density is recalled above and satisfies

 $$
 \P(\mathcal{U}_{\alpha} \leq y) \sim_{y \to 0} C'(\alpha)y^{2}.
 $$

\end{rmk}

\begin{proof}[Proof of Theorem~\ref{thm:expectedNumberOfRecords}]
Let $U_{n}$ be a uniform vertex of $T_{n}$. On the one hand we have

$$
\P( U_{n} \text{ is a record}) = \frac{\E\left[1+\#S(W_n)\right]}{n+1}.
$$

On the other hand, since being a record only depends on the linear random walk observed along one's ancestral line,

$$
\P(U_{n} \text{ is a record}) = \E[f_{\theta}(\lvert U_{n} \rvert )],
$$
where $f_{\theta}(k)$ is defined by Lemma~\ref{lem:ClassicalRecord}.

Since we have the convergence \eqref{eq:cvHauteurUnif} for $\lvert U_{n} \rvert$ and $f_{\theta}(k)$ has order $\frac{1}{\sqrt{ k }}$, this leads to the following expression for $\E\left[\#S(W_n)\right]$:

\begin{equation}
    \frac{1}{\sigma_{\theta}}\E\left[1+\#S(W_n)\right] = \frac{n+1}{\sqrt{ h_{n} }} \times  \E\left[ \frac{\sqrt{ h_{n} }f_{\theta}(\lvert U_{n} \rvert )}{\sigma_{\theta}} \right].
\end{equation}

Notice that $\frac{n}{\sqrt{ h_{n} }} = \ell_{\mu}(n)^{1/2}n^{\frac{\alpha+1}{2\alpha}}$ as desired, hence we must prove that $\E\left[ \frac{\sqrt{ h_{n} }f_{\theta}(\lvert U_{n} \rvert )}{\sigma_{\theta}} \right]$ converges to obtain Theorem~\ref{thm:expectedNumberOfRecords}.

By Skorokhod's representation theorem applied to \eqref{eq:cvHauteurUnif}, we may assume that $\frac{\lvert U_{n} \rvert}{h_{n}} \xrightarrow[n \to +\infty]{a.s.} \mathcal{U}_{\alpha}$ and then use Lemma~\ref{lem:ClassicalRecord} to get 

\begin{equation}\label{eq:946671}
    \frac{\sqrt{ h_{n} }f_{\theta}(\lvert U_{n} \rvert)}{\sigma_{\theta}} \xrightarrow[n \to +\infty]{(d)} \frac{1}{\sqrt{ 2\pi \mathcal{U}_{\alpha} }}.
\end{equation}

Moreover, according to \eqref{eq:MomentshauteurUnif}, the limiting distribution in \eqref{eq:946671} has a finite expectation precisely given by 

\begin{equation*}
    \E\left[ (2\pi \mathcal{U}_{\alpha})^{-1/2}\right] = \frac{1}{2}\sqrt{ \frac{\alpha}{2} }\frac{\Gamma\left( 1+\frac{\alpha-1}{\alpha} \right)}{\Gamma\left( 1+\frac{\alpha-1}{2\alpha} \right)}=\sqrt{ \frac{\alpha}{2\pi} }\frac{\Gamma\left(1-\frac{1}{2\alpha} \right)}{2^{1/\alpha}}:= c(\alpha),
\end{equation*}
where we used the duplication formula for $\Gamma$. It only remains to prove that  \eqref{eq:946671} also holds in expectation to conclude the proof. To do so, we obtain some uniform integrability by establishing a bound in $\L^{p}$ for some $p>1$.

First, by Lemma~\ref{lem:ClassicalRecord}, we have a constant $C=C(\theta)$ such that $f_{\theta}(k) \leq  \frac{C}{\sqrt{ 1 \vee k }} \forall k \geq  0$, which gives the following bound (with a finite constant $C_{p}$):

$$
\begin{aligned}
\E\left[ \left( \frac{\sqrt{ h_{n} }f_{\theta}(\lvert U_{n} \rvert )}{\sigma_{\theta}} \right)^{p} \right] &\leq C_{p} \E\left[  \left( \frac{h_{n}}{1\vee \lvert U_{n} \rvert } \right)^{\frac{p}{2}}  \right]  \\
& \leq   C_{p} \int_{0}^{+\infty} \P(1\vee \lvert U_{n} \rvert \leq  h_{n} x^{-2/p}) \dd x \\
&\leq C_p\left( \int_{0}^{+\infty} \left(\P(1 \leq   \lvert U_{n} \rvert \leq  h_{n} x^{-2/p})+\mathds{1}_{\{h_n x^{-2/p}>1\}}\P(|U_n|=0)\right) \dd x\right) \\
& \leq C_{p} \left( \int_{0}^{+\infty} \P(1 \leq   \lvert U_{n} \rvert \leq  h_{n} x^{-2/p}) \dd x + \frac{(h_{n})^{\frac{p}{2}}}{n}\right)
\end{aligned}
$$

By choosing $p \in (1,\frac{2\alpha}{\alpha-1})$, we ensure that $(h_{n})^{\frac{p}{2}} = o(n)$. To bound from above the remaining integral uniformly in $n$, we apply Lemma~\ref{lem:UnifBoundHeight} to get

$$
\E\left[ \left( \frac{\sqrt{ h_{n} }f_{\theta}(\lvert U_{n} \rvert )}{\sigma_{\theta}} \right)^{p} \right] \leq C'_{p} \left( \int_{0}^{+\infty} (1 \wedge x^{-2/p})\dd x  + 1\right).
$$

The last bound is finite when $p \in (1,2) \subset (1,\frac{2\alpha}{\alpha-1})$, hence we have a uniform bound in $\L^{p}$ for those $p$, and the result follows.
\end{proof}

\appendix

\section{More background on scaling limits of snakes}\label{sct:knownScalingLimits}

The purpose of this section is to present the scaling limit results for discrete snakes that we rely on throughout this article. In this section, $\beta\in(1,2]$ is fixed.

\paragraph{Encoding processes and excursion theory.}

Let $\cX=(\cX_t)_{t\geq0}$ be a spectrally positive stable Levy process with index $\beta$. Since it has no negative jumps, this process is characterized by its Laplace exponent; we normalize it so that for every $t\geq0$ and $\lambda>0$, 
\[\E\left[\exp(-\lambda \cX_t)\right]=\exp(-t\lambda^\beta).\]
As explained by Duquesne \& Le Gall \cite{DuquesneLeGall2002RandomTreesLevy}, where the reader may find a detailed version of what follows, one can define the \textit{height process} $\cH$ associated with $\cX$. This process $\cH$ is a continuous function that will be interpreted as a contour function for an infinite forest of real trees. One can then equip those trees with Brownian motion along every branches to get the \textit{Brownian snake driven by $\cH$}, but here it will be easier to consider the associated spatial contour process $\cS$. Indeed, $\cS$ may be directly defined as a Gaussian process given $\cH$ characterized by continuous trajectories and the following covariance function: for every $0\leq s\leq t$, we have

\[\E\left[(\cS_t-\cS_s)^2\,|\,\cH\right]=\cH_t+\cH_s-2\min_{r\in[s,t]}\cH_r.\]

Finally, we also define $\cI$ as the opposite of the running infimum of $\cX$, meaning that $\cI_t = - \inf_{s \leq t} \cX_s$. It is a continuous non-decreasing process, and $\cI_t$ may be interpreted as the amount of trees in the forest encoded by $(\cH_s)_{0 \leq s \leq t}$. In particular, the random measure $\dd\cI_s$ is exactly supported on the set $\{s \geq 0: \cH_s = 0\}$.

In order to work with a single random tree instead of a forest, we also want to consider a single excursion of $\cX$ above $0$ and the associated processes. Let us first recall the structure of the excursions in $\cX$:  we can decompose $\cX$ into a collection $(e^i)_i$ of excursions above its running infimum. More precisely, we write $(g_i,d_i)$ for an excursion interval of $\cX+\cI$ away from $0$, and we set $e^i(s)=(\cX_{g_i+s}-\cX_{g_i})\1_{s \leq d_i-g_i}$ to be the corresponding excursion, which is an element of the space of nonnegative càdlàg excursions $E=\{ e \in D(\R_+,\R_+): e(0)=0 \text{ and } \zeta(e)=\sup\{s: e(s) >0\}<+\infty\}$. Since $\cI$ is a local time at $0$ of the Markov process $\cX+\cI$, Itô's excursion theory tells us that

\[ \sum_i \delta_{(\cI_{g_i}, e^i)} \]
is a Poisson point measure and its intensity measure may be written as $\dd t\N_{\beta}(\dd e)$ where the Itô's excursion measure $\N_{\beta}$ is $\sigma$-finite on $E$ but not finite. With a somewhat abusive notation, we will denote by $\cX^{exc}$ an excursion distributed according to $\N_{\beta}$, and write $\cH^{exc}$ and $\cS^{exc}$ for the corresponding excursions in $\cH$ and $\cS$.

Finally, we recall that thanks to the scaling property of $\cX$ we can also define a family of probability measures $(\N_{\beta}^{(\sigma)})_{\sigma > 0}$ on $E$, where $\N_{\beta}^{(\sigma)}$ may informally be understood as $\N_{\beta}(\cdot \vert \zeta=\sigma)$ despite the fact that $\N_{\beta}(\zeta=\sigma)=0$. This family satisfies, for every measurable set $A \subset E$,

\[\N_{\beta}(A)=\int_0^\infty\frac{\N_{\beta}^{(\sigma)}(A)}{\beta\Gamma(1-1/\beta)\sigma^{1/\beta+1}}\dd\sigma.\]

In particular, we say that $\N_{\beta}^{(1)}$ is the law of an excursion of $\cX$ with duration $\zeta=1$, also called a normalized excursion of $\cX$. Again, we let $\cX^{(1)}$ denote such normalized excursion, $\cH^{(1)}$ denote the associated height process and $\cS^{(1)}$ denote the Brownian snake driven by $\cH^{(1)}$. Note that by excursion theory, $\cX^{(1)}$ may be sampled as follows: given the process $\cX$, set 
\[g_1=\sup\{s\leq1:\cX_s=\inf_{t\in[0,1]}\cX_t\}\quad\text{ and }\quad d_1=\inf\{s\geq1:\cX_s=\inf_{t\in[0,1]}\cX_t\}.\]
Then $\left((d_1-g_1)^{-1/\beta}(X_{g_1+t(d_1-g_1)}-X_{d_1})\right)_{t \geq 0}$ is a random excursion distributed according to $\N_{\beta}^{(1)}$.

\paragraph{Convergence of encoding processes.}

The first scaling limit results in the $\beta$-stable setting were obtained for the contour process alone, in order to study trees without spatial labelling. Duquesne and Le Gall \cite{DuquesneLeGall2002RandomTreesLevy} proved that the (properly rescaled) contour of a forest of i.i.d. Bienaymé trees converge in distribution to $\cH$, and then Duquesne \cite{Duquesne2003LimitTheoremContour} obtained that for a single Bienaymé tree conditioned to have a large size $n$, the contour process converges to $\cH^{(1)}$. 
Building on this, Marzouk \cite{Marzouk2020ScalingLimitsDiscrete} proved that the contour and the spatial contour of a snake indexed by such a large Bienaymé tree converge jointly toward $(\cH^{(1)},\cS^{(1)})$ when the jump distribution is sufficiently light (namely, when Assumption~\ref{assum:JumpTailAssumption} holds). Metz-Donnadieu \cite{TheseAlexis} has recently adapted Marzouk's argument to a forest of i.i.d. snakes, and here we will make use of the following slightly modified version where we consider an infinite sequence of i.i.d. $(\pi,\theta)$-random snakes $(W_i)_{i\geq 1}$.

Given this sequence $(W_i)_{i\geq 1}$, we let $C_i$ (resp $C^{sp}_i$) denote the contour (resp. spatial contour) process of $W_i$, and we let $C$ (resp. $C^{sp}$) be the process obtained by concatenating all $C_i$ together (resp. all $C^{sp}_i$ together). More precisely, we define $\zeta_0=-1$ and for $k \geq 1, \zeta_k = (k-1)+\sum_{i=1}^{k} \zeta(C_i)$, where $\zeta(C_i)$ is the length of $C_i$, i.e $\zeta(C_i)/2$ is the number of edges in $W_i$. Then we set $I_j=\inf\{ k \geq 0: j \leq \zeta_k \}$, and both $C$ and $C^{sp}$ are such that for $j \in\N$ with $I_j=i$, $C(j)=C_i(j-(\zeta_{i-1}+1))$ and  $C^{sp}(j)=C^{sp}_i(j-(\zeta_{i-1}+1))$.

\begin{theorem}\label{thm:ConvergenceSnakesInfiniteVolume}
Assume that $\pi \in \mathrm{dom}(\beta)$ and the jump distribution $\theta$ is centered and satisfies Assumption~\ref{assum:JumpTailAssumption}. Extend the discrete process $(I_j,C(j),C^{sp}(j))_{j \in\N}$ by linear interpolation to see it as an element of $\cC(\R_+,\R^3)$. Then the following convergence in distribution holds in the space $\cC(\R_+,\R^3)$ with respect to uniform convergence on compact subsets:
\begin{equation*}
    \left( \frac{I_{2nt} }{n^{1/\beta}\ell_{\pi}(n)}, \frac{\ell_{\pi}(n)}{n^{1-1/\beta}}C(2nt), \frac{1}{\sigma_{\theta}}\sqrt{\frac{\ell_{\pi}(n)}{n^{1-1/\beta}}}C^{sp}(2nt) \right)_{t \geq 0} \xrightarrow[n \to +\infty]{(d)} (\cI_t,\cH_t, \cS_t)_{t \geq 0},
\end{equation*}
where $\ell_{\pi}$ is the slowly varying function from \eqref{eq:DefAttractionStablebis}.
\end{theorem}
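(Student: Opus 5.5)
The natural route is to reduce Theorem~\ref{thm:ConvergenceSnakesInfiniteVolume} to two existing ingredients. The first is the Duquesne--Le Gall functional limit theorem for forests of i.i.d.\ $\pi$-Bienaymé trees, which gives joint convergence of the rescaled Łukasiewicz path, height process and contour process. Here $I_{2nt}$ is, up to a $\pm1$ boundary effect, minus the running infimum of the Łukasiewicz path read along the contour time change. The second ingredient is Marzouk's spatial argument, in the version adapted by Metz-Donnadieu to forests, which handles the labels.

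\textbf{Step 1: the tree part.} Let $X^{Luk}$ be the concatenated Łukasiewicz path of the forest and $H^{disc}$ its height process. The Duquesne--Le Gall theorem gives
\begin{equation*}
\Bigl( \tfrac{X^{Luk}_{\lfloor nt\rfloor}}{n^{1/\beta}\ell_{\pi}(n)}, \tfrac{\ell_{\pi}(n)}{n^{1-1/\beta}}H^{disc}_{\lfloor nt\rfloor}, \tfrac{\ell_{\pi}(n)}{n^{1-1/\beta}}C(2nt)\Bigr) \to (\cX_t,\cH_t,\cH_t).
\end{equation*}
The number of completed trees up to vertex index $m$ equals $-\min_{k\le m}X^{Luk}_k$, up to $\pm1$. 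The contour time $2nt$ and the vertex index $nt$ are related by the standard estimate $\sup_{j\le 2nT}|j/2 - K(j)| = O(\max H^{disc})=o(n)$ in probability, where $K(j)$ is the index of the current vertex. Combining this with continuity of the infimum functional, $I_{2nt}/(n^{1/\beta}\ell_\pi(n))\to\cI_t$ jointly with the other two coordinates. The extra unit steps between trees, coming from the $(k-1)$ in $\zeta_k$, only shift times by $O(I)=o(n)$, so they are negligible.

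\textbf{Step 2: the labels given the trees.} Conditionally on the forest, $C^{sp}$ is a sum of independent jumps along ancestral lines. I would prove that finite-dimensional marginals converge to the Gaussian process with covariance $\cH_s+\cH_t-2\min_{[s,t]}\cH$. Under Assumption~\ref{assum:JumpTailAssumption}, the truncated jumps, cut at $\sqrt{h_n}$-scale where $h_n=n^{1-1/\beta}/\ell_\pi(n)$, satisfy the Lindeberg condition. The same assumption, together with the Kortchemski-type bound $\#\text{vertices}=O(n)$ on $[0,2nT]$, shows that with high probability no jump exceeds $\varepsilon\sqrt{h_n}$, since $n\,\P(|J|\ge\varepsilon\sqrt{h_n})\to0$.

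\textbf{Step 3 (main obstacle): tightness of the spatial part.} This is the place where Marzouk's bound is needed: a Kolmogorov-type moment estimate $\E|C^{sp}(j)-C^{sp}(k)|^{q}\lesssim (d_{gr}(u_j,u_k))^{q/2}$ for truncated jumps, combined with a Hölder-type control of the discrete height process $d_{gr}(u_j,u_k)\le C|j-k|^{\gamma}h_n^{1-\gamma}n^{-\gamma}$ on a high-probability event. For a single conditioned tree this is exactly Marzouk's argument. For the forest, I would restrict to the first $N_T$ trees covering $[0,2nT]$. The number of such trees is $O(n^{1/\beta}\ell_\pi(n))$ by Step~1. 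One can then either apply the tree estimates tree by tree, controlling the largest tree through a union bound, or invoke Metz-Donnadieu's forest version directly. The plan is to cite this adaptation, checking only that the concatenation convention for $C^{sp}$, which returns to $0$ between trees, matches the one used there.

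Tightness plus finite-dimensional convergence, joint with Step~1, gives the convergence of the triple.
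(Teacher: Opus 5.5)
Your proposal is correct and follows essentially the paper's route: the joint convergence of $I$ and $C$ comes from Duquesne--Le Gall, the finite-dimensional convergence of the labels given the forest is a branchwise central limit theorem, and tightness of $C^{sp}$ is imported from Marzouk's argument as adapted to forests by Metz-Donnadieu. The paper makes your ``restrict to the first $N_T$ trees'' step precise by cutting the infinite sequence into i.i.d.\ blocks of $a_n$ snakes, where $\zeta_{a_n}/(2n)\to\kappa_1$, taking tightness of each block's spatial contour from Metz-Donnadieu, and concluding from continuity of concatenation together with $\kappa_m\to+\infty$ in probability.
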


\begin{rmk}[Discussion of the proof]
If one forgets about the spatial contour process, the joint convergence of the first two processes is a corollary of \cite[Section~2]{DuquesneLeGall2002RandomTreesLevy}. Knowing this, it is straightforward to deduce the joint finite-dimensional convergence of those three processes, and the remaining point to discuss is the tightness of the spatial contour process. Marzouk provided an argument to obtain this tightness in the case of the single snake with an underlying tree conditioned to have $n$ edges, and Metz-Donnadieu adapted this argument in the case of a forest of $a_n$ i.i.d. snakes, where $a_n$ is such that $\frac{\zeta_{a_n}}{2n}$ converges in distribution to $\kappa_1 = \inf\{ t\geq 0: \cI_t \geq 1 \}$. In our framework, the infinite sequence of i.i.d. snakes can be seen as a sequence of i.i.d. forests consisting of $a_n$ snakes, and then $(C,C^{sp})$ simply is the concatenation of the i.i.d. contour and spatial contour processes of those forests. Thanks to Metz-Donnadieu's result, the sequence of i.i.d. spatial contour processes together with their durations converge in distribution, in particular it is tight (in the space $\{ (f,\zeta) \in \cC_c(\R_+,\R)\times\R_+: \forall t \geq \zeta, f(t)=0  \}^{\N}$ equipped with the product distance). Since in addition the cumulative durations of the first $m$ forests converge to  $\kappa_m = \inf\{ t\geq 0: \cI_t \geq m \}$ and $\kappa_m \to +\infty$ in probability as $m\to +\infty$, we deduce the tightness of the whole process $C^{sp}$ from the fact that concatenating $m$ continuous functions with compact support  is a continuous operation.
\end{rmk}

Theorem~\ref{thm:ConvergenceSnakesInfiniteVolume} should be useful to obtain scaling limits of an i.i.d. forest of snakes jointly with their subordinate trees. In addition, by following the method exposed in the proof of \cite[Proposition 2.5.2]{Duquesne2003LimitTheoremContour}, we can also use it to obtain limit theorems for a single snake $W$ conditioned to be large in a non-degenerate sense. In particular, we are interested in the conditioning where $\max W=\max C^{sp}$ is at least $\sigma_{\theta}n$, to obtain Proposition~\ref{prop:ConvergenceSnakesCondMax}. This is non-degenerate in the sense that for the limit we have $\N_{\beta}(\max\cS^{exc} \geq 1) \not\in \{0,+\infty\}$, due to the fact that $\P(\max_{[0,1]} \cS \geq 1) \not\in \{0,1\}$. 

Before proving Proposition~\ref{prop:ConvergenceSnakesCondMax}, recall that this result requires to change the time-scale by considering $k_n$ defined by \eqref{eq:time:change}, which is

\begin{equation*}
    \sqrt{\frac{\ell_{\pi}(k_n)}{k_n^{1-1/\beta}}} \sim \frac1n.
\end{equation*}

Indeed, given that $\max W \geq \sigma_{\theta}n$, the spatial contour will be of order $n$, thus in sight of Theorem~\ref{thm:ConvergenceSnakesInfiniteVolume} the right time-scale is $k_n$.

\begin{proof}[Proof of Proposition~\ref{prop:ConvergenceSnakesCondMax}]
As said above, we follow the method used for \cite[Proposition 2.5.2]{Duquesne2003LimitTheoremContour} to deduce this result from Theorem~\ref{thm:ConvergenceSnakesInfiniteVolume}. We consider the processes $(C_i,C^{sp}_i)_{i \geq 1}, (C, C^{sp}), (\zeta_i)_{i \geq 1}, I$ associated with the sequence of i.i.d. random snakes $(W_i)_{i \geq 1}$ involved in Theorem~\ref{thm:ConvergenceSnakesInfiniteVolume}. For all $n \in\N$, we also set
\[i_n:=\inf\{i \geq 1:\max C_i^{sp} \geq \sigma_{\theta}n\}.\]
Since $i_n$ is the index of the first snake that reaches the label $n$, we have $(C_{i_n},C_{i_n}^{sp}) \stackrel{(d)}{=} (C_{[n]},C_{[n]}^{sp})$. Moreover, $(C_{i_n},C_{i_n}^{sp})$ is the excursion of $(C, C^{sp})$ on the interval $[\zeta_{i_n-1}+1,\zeta_{i_n}]$.

Finally, we also consider the continuous processes $(\cI,\cH,\cS)$ and we define the times
\begin{equation*}
   \begin{aligned}
   g^{[\geq1]} &= \sup\{s \geq 0: \cH_s =0 \text{ and } \max_{[0,s]}\cS < 1 \}, \\
   d^{[\geq1]} &= \inf\{s \geq 0: \cH_s =0 \text{ and } \max_{[0,s]}\cS \geq 1, \}
   \end{aligned}
\end{equation*}
such that by excursion theory we may sample the variable $\left(\cH^{[\geq1]}_t,\cS^{[\geq1]}_t\right)_{t}$ as the excursion of $(\cH,\cS)$ on the interval $[g^{[\geq1]},d^{[\geq1]}]$. We thus want to prove that 

\begin{equation}\label{eq:excCV}
    \left( \frac{1}{n^2}C_{i_n}(2k_n t), \frac{1}{n \sigma_{\theta}}C_{i_n}^{sp}(2k_n t) \right)_{t \geq 0}\xrightarrow[n\rightarrow\infty]{(d)}\left(\cH^{[\geq1]}_t,\cS^{[\geq1]}_t\right)_{t\geq0}.
\end{equation}

The convergence of those excursions will follow from the convergence of the entire process and the convergence of the excursion intervals. First, by Skorokhod representation theorem, we can assume that the conclusion of Theorem~\ref{thm:ConvergenceSnakesInfiniteVolume} holds almost surely, and then extract along the subsequence $(k_n)_n$ to get
\begin{equation}\label{eq:convergence:as}
    \left( \frac{n^2 }{k_n} I_{2k_n t}, \frac{1}{n^2}C(2k_n t), \frac{1}{n \sigma_{\theta}}C^{sp}(2k_n t) \right)_{t \geq 0} \xrightarrow[n \to +\infty]{a.s.} (\cI_t,\cH_t, \cS_t)_{t \geq 0}.
\end{equation}

Then, since almost surely $\max_{[0,d^{[\geq1]}]} \cS > 1$, $\max_{[0,g^{[\geq1]}]} \cS < 1$ and $\cH$ is strictly positive on $(g^{[\geq1]},d^{[\geq1]})$, the convergences of the contour and spatial contour easily give 
\begin{equation*}
\begin{aligned}
     \limsup \frac{\zeta_{i_n-1}}{2k_n} &\leq g^{[\geq1]}, \\
     \liminf \frac{\zeta_{i_n}}{2k_n} &\geq d^{[\geq1]}. \\
\end{aligned}
\end{equation*}

There is only one way not to have an actual convergence in the previous display: there must be $s >0$ such that $\cH_s=0$ but which cannot be approximated by $\frac{\zeta_{j_n}}{2k_n}$ for some sequence $(j_n)_n$. However, recall that $(\frac{n^2 }{k_n} I_{2k_n t})_{t \geq 0}$ only increases on intervals of the form $[\frac{\zeta_{i}}{2k_n},\frac{\zeta_{i}+1}{2k_n}]$ and converges to $\cI$ whose increase points are exactly $\{s\geq 0: \cH_s=0 \}$. Because of this, we actually have $\frac{\zeta_{i_n-1}}{2k_n} \to g^{[\geq1]}$ and $\frac{\zeta_{i_n}}{2k_n} \to d^{[\geq1]}$, and combining this with \eqref{eq:convergence:as} implies \eqref{eq:excCV} as desired. At the same time, it shows that we also have
\begin{equation*}
    \frac{\# T_{i_n}}{k_n} = \frac{\zeta_{i_n}-\zeta_{i_{n-1}}-1}{2k_n} \xrightarrow[n \to +\infty]{a.s.} d^{[\geq1]}-g^{[\geq1]} = \zeta\bigl( \cH^{[\geq1]} \bigr).
\end{equation*}
This proves the second statement of Proposition~\ref{prop:ConvergenceSnakesCondMax}
\end{proof}

\section{Regularly varying functions}\label{sct:regular}

We give here a brief account of the properties of regularly varying
functions, and we refer to \cite{BinghamGoldieTeugels1987RegularVariation}
for a complete survey on those functions.

\begin{definition}\label{Regular:def}
A measurable function \(f:(0,+\infty)\mapsto (0,+\infty)\) is
said to be \begin{itemize}
    \item \emph{slowly varying} at \(+\infty\) when
\[\forall \lambda>0,\ \frac{f(\lambda x)}{f(x)} \xrightarrow[x \to \infty]{} 1;
\]
    \item \emph{regularly varying with index \(\beta \in \mathbb{R}\)} (or
\emph{\(\beta\)-varying}) at \(+\infty\) when
\[\forall \lambda>0,\ \frac{f(\lambda x)}{f(x)} \xrightarrow[x \to \infty]{} \lambda^{\beta},
\] or equivalently when \(x \mapsto x^{-\beta}f(x)\) is a slowly varying
function at \(+\infty\).
\end{itemize}  
\end{definition}

\begin{rmk}
We also say that a function $f$ defined on a neighbourhood of $a \in [0,+\infty)$ is regularly varying at $a^{+}$ (resp. $a^{-}$, when $a \neq 0$) if there is $g$ a regularly varying function at $+\infty$ such that $f(a+x^{-1}) \sim g(x)$ (resp. $f(a-x^{-1}) \sim g(x)$) as $x \to +\infty$.
Here by default a regularly varying function will be regularly varying at $+\infty$, and we will only write down explicitly the limiting point when it is more convenient to deal with regularly varying functions at $a \neq +\infty$.
\end{rmk}

The limiting power function in Definition~\ref{Regular:def}
may seem arbitrary, but the Characterization Theorem written below
states that as soon as the ratio \(\frac{f(\lambda x)}{f(x)}\)
converges, for a sufficiently large set of \(\lambda\), to some unknown
limiting function, then actually it converges for all \(\lambda>0\) to a
power function.

\begin{theorem}[{Characterization Theorem, \cite[Theorem~1.4.1]{BinghamGoldieTeugels1987RegularVariation}}]\label{Regular:Charac}
Consider a measurable function \(f:(0,+\infty) \to (0,+\infty)\). If
there is a set \(\Lambda \subset \mathbb{R_{+}}\) of positive Lebesgue
measure such that for all \(\lambda \in \Lambda\),
\(\frac{f(\lambda x)}{f(x)}\) converges to a limit in $(0,+\infty)$ as
\(x \to +\infty\), then \(f\) is regularly varying with some index
\(\beta \in \mathbb{R}\).
\end{theorem}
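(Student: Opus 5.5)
The plan is to run the classical group-theoretic argument: show that the set of good $\lambda$ is a multiplicative subgroup of $(0,+\infty)$, enlarge it to all of $(0,+\infty)$ using Steinhaus' theorem, and then identify the limit as a power function by solving a Cauchy equation.

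\textbf{Step 1: the set of good $\lambda$ is a group.} Let $G$ be the set of $\lambda>0$ such that $\frac{f(\lambda x)}{f(x)}$ converges to a limit $g(\lambda)\in(0,+\infty)$ as $x\to+\infty$. By assumption $\Lambda\subset G$, and clearly $1\in G$.
\begin{itemize}
\item If $\lambda,\lambda'\in G$, then writing
\[\frac{f(\lambda\lambda' x)}{f(x)}=\frac{f(\lambda(\lambda' x))}{f(\lambda' x)}\cdot\frac{f(\lambda' x)}{f(x)},\]
and noting that $\lambda' x\to+\infty$, we get $\lambda\lambda'\in G$ with $g(\lambda\lambda')=g(\lambda)g(\lambda')$.
\item If $\lambda\in G$, the substitution $x=y/\lambda$ gives $\frac{f(y)}{f(y/\lambda)}\to g(\lambda)$. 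Since $g(\lambda)\in(0,+\infty)$, this means $\lambda^{-1}\in G$ with $g(\lambda^{-1})=g(\lambda)^{-1}$.
\end{itemize}
Hence $A:=\log G$ is an additive subgroup of $\R$, and $k:=\log g\circ\exp$ is additive on $A$.

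\textbf{Step 2: $G=(0,+\infty)$ by Steinhaus.} Since $\Lambda$ has positive Lebesgue measure, so does $\log\Lambda\subset A$; here we may restrict to a bounded piece of $\Lambda$ away from $0$, where $\log$ is bi-Lipschitz. By Steinhaus' theorem, $\log\Lambda-\log\Lambda$ contains an interval $(-\delta,\delta)$. This difference set lies in $A$ because $A$ is a group. An additive subgroup of $\R$ containing a neighbourhood of $0$ is all of $\R$, so $G=(0,+\infty)$.

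One point needs care: Steinhaus requires $\log\Lambda$ to contain a measurable set of positive measure. If $\Lambda$ is not assumed measurable, I would replace it by a measurable subset of positive measure, or else argue directly that $G$ is measurable. Indeed $G$ is defined by a $\limsup$/$\liminf$ condition on the measurable function $(\lambda,x)\mapsto f(\lambda x)/f(x)$; taking the limit along a countable dense set of $x$ needs justification, which is why restricting to a measurable subset of $\Lambda$ is the cleaner route.

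\textbf{Step 3: identify the limit.} Now $k$ is additive on all of $\R$. It is also measurable, because $g(\lambda)=\lim_n f(\lambda x_n)/f(x_n)$ along any sequence $x_n\to+\infty$, which is a pointwise limit of measurable functions of $\lambda$. A measurable additive function on $\R$ is linear: it is bounded on some set of positive measure by Lusin's theorem, then bounded near $0$ by Steinhaus again, hence continuous and therefore linear. So $k(t)=\beta t$ for some $\beta\in\R$, that is $g(\lambda)=\lambda^\beta$ for every $\lambda>0$, which is exactly regular variation with index $\beta$.

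The main obstacle is the measure-theoretic bookkeeping in Steps 2 and 3, namely the measurability of $G$ and $g$ needed to invoke Steinhaus' theorem and the measurable Cauchy equation. The algebraic group property in Step 1 is routine.
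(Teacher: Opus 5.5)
Your argument is correct, and it is essentially the standard proof of the result the paper cites from Bingham--Goldie--Teugels (the paper gives no proof of its own): the set of admissible $\lambda$ is a multiplicative group, Steinhaus' theorem on a measurable piece of $\log\Lambda$ makes it all of $(0,+\infty)$, and the measurable solutions of $k(u+v)=k(u)+k(v)$ are linear, which forces $g(\lambda)=\lambda^{\beta}$. Restricting to a measurable subset of $\Lambda$, as you suggest, is all that is needed, so you never have to address the measurability of the full set of good $\lambda$.
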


An obvious example of a \(\beta\)-varying function is
\(x \mapsto x^{\beta}\), and actually any regularly varying function
should be thought as roughly asymptotic to a power function (and any
slowly varying function as roughly constant) since many asymptotic
properties of power functions extend to regularly varying functions. In
this direction, the following result enables to bound regularly varying
functions by power functions.

\begin{prop}[{Potter's bound, 
\cite[Theorem 1.5.6]{BinghamGoldieTeugels1987RegularVariation}}]\label{Regular:Potter}
 Let \(f\) be a
\(\beta\)-varying function. For all \(C>1\) and \(\varepsilon>0\), there
is \(A>0\) such that
\[ \forall x,y \geq A,\ \frac{f(x)}{f(y)} \leq C\times\left( \frac{x}{y} \right)^{\beta+\varepsilon} \vee \left( \frac{x}{y} \right)^{\beta-\varepsilon}.
\] In particular, for all \(\varepsilon>0\) we have
\(x^{\beta-\varepsilon} \ll f(x) \ll x^{\beta+\varepsilon}\).
\end{prop}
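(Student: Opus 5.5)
This is Potter's bound, quoted from Bingham–Goldie–Teugels; a self-contained proof reduces to the slowly varying case. Set $L(x)=x^{-\beta}f(x)$, which is slowly varying. Then
\[
\frac{f(x)}{f(y)} = \left(\frac{x}{y}\right)^{\beta}\frac{L(x)}{L(y)} .
\]
So it suffices to show: for all $C>1$ and $\varepsilon>0$ there is $A$ such that for all $x,y\ge A$,
\[
\frac{L(x)}{L(y)} \le C\left(\frac{x}{y}\right)^{\varepsilon}\vee\left(\frac{x}{y}\right)^{-\varepsilon}.
\]
Multiplying this bound by $(x/y)^\beta$ gives exactly the displayed claim.

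The key input is the Uniform Convergence Theorem for slowly varying functions: $L(\lambda x)/L(x)\to1$ uniformly for $\lambda$ in compact subsets of $(0,\infty)$. I will take it as standard from \cite{BinghamGoldieTeugels1987RegularVariation}; this is where measurability is genuinely used, via an Egorov/Steinhaus-type argument. From it I will derive the Representation Theorem
\[
L(x)=c(x)\exp\left(\int_a^x \frac{\eta(t)}{t}\,dt\right),\qquad c(x)\to c\in(0,\infty),\quad \eta(t)\to0 .
\]
The standard route is to set $h(u)=\log L(e^u)$. Uniform convergence gives $h(u+v)-h(u)\to0$ uniformly for $v\in[0,1]$. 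One then writes
\[
h(u)=\int_u^{u+1}h(s)\,ds+\Big(h(u)-\int_u^{u+1}h\Big).
\]
Here the second term converges to $0$. The first term is differentiable, with derivative $h(u+1)-h(u)\to0$; this derivative plays the role of $\eta$, and the remainder gives $c(x)$ after adjusting constants. (Local boundedness of $h$ on large intervals, needed for the integral, also follows from uniform convergence.) I expect the uniform convergence/local integrability step to be the only real obstacle; everything afterwards is elementary.

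Given the representation, the bound follows directly. Choose $A$ so large that for $t\ge A$ we have $|\eta(t)|\le\varepsilon$ and $c(t)/c(s)\le C$ for all $s,t\ge A$; the latter is possible since $c(t)\to c>0$. Then for $x,y\ge A$,
\[
\frac{L(x)}{L(y)}=\frac{c(x)}{c(y)}\exp\left(\int_y^x\frac{\eta(t)}{t}\,dt\right)\le C\exp\bigl(\varepsilon\,|\log(x/y)|\bigr),
\]
which is precisely $C\,(x/y)^{\varepsilon}\vee(x/y)^{-\varepsilon}$.

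The "in particular" statement follows by fixing $y=A$: for $x\ge A$ this gives $f(x)\le C' x^{\beta+\varepsilon}$. Applying the same inequality with the roles of $x$ and $y$ swapped gives $f(x)\ge C'' x^{\beta-\varepsilon}$. Replacing $\varepsilon$ by $\varepsilon/2$ then yields $x^{\beta-\varepsilon}\ll f(x)\ll x^{\beta+\varepsilon}$.
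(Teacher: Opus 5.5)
Your proposal is correct, and it is exactly the standard argument behind the cited result (Uniform Convergence Theorem $\Rightarrow$ Representation Theorem $\Rightarrow$ the bound, including the reduction via $L(x)=x^{-\beta}f(x)$), which the paper simply quotes from Bingham--Goldie--Teugels without reproducing. One small point of precision: for merely measurable $h$, the map $u\mapsto\int_u^{u+1}h$ is only absolutely continuous, not necessarily differentiable everywhere, so instead of differentiating it, use the exact identity $\int_u^{u+1}h=\int_a^{a+1}h+\int_a^u\bigl(h(t+1)-h(t)\bigr)\,dt$, which gives $\eta(s)=h(\log s+1)-h(\log s)$ directly.
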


Given two regularly varying functions \(f\) and \(g\) such that
\(g(x) \to +\infty\), \(f \circ g\) is clearly regularly varying at
\(+\infty\) with
\(\text{index}(f \circ g) = \text{index}(f) \times\text{index}(g)\).
This stability by composition also comes with the existence of
(asymptotic) inverse function for regularly varying function with
positive index.

\begin{prop}[{Asymptotic inversion, 
\cite[Theorem 1.5.12]{BinghamGoldieTeugels1987RegularVariation}}]\label{Regular:Inversion}
Let \(f\) be a regularly
varying function with index \(\beta >0\), then there is a
\(\frac{1}{\beta}\)-varying function \(g\) such that
\[ f(g(x)) \sim g(f(x)) \sim x \ \text{ as } x \to +\infty.
\] Moreover, \(g\) is unique up to asymptotic equivalence.
\end{prop}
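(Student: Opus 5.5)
The plan is the classical argument via a generalized inverse, with the non-monotonicity of $f$ handled by the Uniform Convergence Theorem for regularly varying functions \cite[Theorem~1.2.1]{BinghamGoldieTeugels1987RegularVariation}. For every compact $K\subset(0,+\infty)$, this theorem gives $\sup_{\lambda\in K}\bigl|f(\lambda y)/f(y)-\lambda^{\beta}\bigr|\to0$ as $y\to+\infty$. Since $\beta>0$, Potter's bound (Proposition~\ref{Regular:Potter}) gives $f(y)\to+\infty$, so
\[
g(x):=\inf\{y\geq A:\ f(y)>x\}
\]
is finite for large $x$, non-decreasing, and tends to $+\infty$. Here $A$ is chosen large enough that Potter's bound applies beyond it.

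\textbf{Step 1: $f(g(x))\sim x$.} Fix $\lambda>1$ and $x$ large, and write $y=g(x)$.
\begin{itemize}
\item By definition of the infimum, $f(y/\lambda)\leq x$.
\item Also by definition, there is some $y'\in[y,\lambda y]$ with $f(y')>x$.
\end{itemize}
Applying uniform convergence on $K=[\lambda^{-1},\lambda]$ to both points gives
\[
(\lambda^{-\beta}-\varepsilon)f(y)\leq x\leq(\lambda^{\beta}+\varepsilon)f(y)
\]
for $x$ large. Letting $\varepsilon\to0$ and then $\lambda\downarrow1$ yields $f(g(x))\sim x$.

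\textbf{Step 2: an auxiliary lemma.} If $y_n,z_n\to+\infty$ and $f(y_n)/f(z_n)\to c\in(0,\infty)$, then $y_n/z_n\to c^{1/\beta}$. The proof is by contradiction. If $y_n/z_n$ stayed outside a neighbourhood of $c^{1/\beta}$ along a subsequence, Potter's bound with $\varepsilon<\beta$ would rule out ratios tending to $0$ or $\infty$. This reduces to a compact range of ratios, where uniform convergence gives $f(y_n)/f(z_n)\approx(y_n/z_n)^{\beta}\neq c$, a contradiction.

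\textbf{Step 3: consequences of the lemma.}
\begin{itemize}
\item Regular variation of $g$: by Step 1, $f(g(cx))/f(g(x))\to c$, so the lemma gives $g(cx)/g(x)\to c^{1/\beta}$. Hence $g$ is $\tfrac1\beta$-varying. Measurability is automatic since $g$ is monotone.
\item $g(f(x))\sim x$: apply Step 1 at the point $f(x)$ to get $f(g(f(x)))\sim f(x)$, then the lemma gives $g(f(x))/x\to1$.
\item Uniqueness: if $\tilde g$ also satisfies $f(\tilde g(x))\sim x$, then $f(\tilde g(x))/f(g(x))\to1$, and the lemma gives $\tilde g\sim g$.
\end{itemize}

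The main obstacle is that $f$ is not assumed monotone, so the naive inverse need not satisfy $f\circ g=\mathrm{id}$. All the work is therefore in Steps 1 and 2, through the Uniform Convergence Theorem. I would check carefully that this theorem applies for merely measurable $f$, which is exactly what \cite{BinghamGoldieTeugels1987RegularVariation} provides.
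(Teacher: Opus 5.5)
Your argument is correct. The paper gives no proof of its own and simply cites \cite[Theorem~1.5.12]{BinghamGoldieTeugels1987RegularVariation}, and your route (the generalized inverse plus the Uniform Convergence Theorem, with the non-monotonicity of $f$ absorbed directly into Step~1) is essentially the standard argument behind that citation.

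Two details deserve to be made explicit. First, the claim that $g(x)\to+\infty$ uses local boundedness of $f$ on $[A,+\infty)$, which your form of Potter's bound provides. Second, the uniqueness step needs $\tilde g(x)\to+\infty$ in order to invoke Step~2. This holds because $\tilde g$ is itself $\frac1\beta$-varying, and the restriction is genuinely needed: $f$ may be unbounded on $(0,A)$, so $f(\tilde g(x))\sim x$ alone does not force $\tilde g\to+\infty$.
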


An asymptotic inverse of a function \(f\) can be expressed by means
of the generalized inverse of monotone functions, but also by means of
the conjugate of slowly varying function, see
\cite{BinghamGoldieTeugels1987RegularVariation} for more details on
this.

Finally, the following results show that regularly varying functions can
be integrated, and sometimes differentiated, just as power functions. We write it in integral form but the exact same results hold for series.

\begin{theorem}[{Direct half of Karamata's Theorem, 
\cite[section 1.6]{BinghamGoldieTeugels1987RegularVariation}}]\label{Regular:Karamata}
Let \(\ell\) be a slowly
varying function. There is \(A>0\) such that \(\ell\) is locally bounded
on \([A,+\infty)\), hence we can integrate and we have the following
estimates:\begin{itemize}
    \item For \(\beta > -1\),
\[ \int_{A}^x t^{\beta}\ell(t)dt \sim_{x \to +\infty} \frac{x^{\beta+1}}{\beta+1} \ell(x); \]
    \item For \(\beta < -1\),
\[ \int_{x}^{+\infty} t^{\beta}\ell(t) dt \sim_{x \to +\infty} \frac{x^{\beta+1}}{-\beta-1}\ell(x);\]
    \item For \(\beta=-1\), the function
\(x \mapsto \int_{A}^x \frac{\ell(t)}{t}dt\) is slowly varying and such
that
\[ \frac{1}{\ell(x)}\int_{A}^x \frac{\ell(t)}{t}dt \xrightarrow[x \to +\infty]{} 0. 
\] Moreover if \(\int_{A}^{+\infty} \frac{\ell(t)}{t}dt <+\infty\) then
the function \(x \mapsto \int_{x}^{+\infty} \frac{\ell(t)}{t} dt\) is
also slowly varying and such that
\[ \frac{1}{\ell(x)}\int_{x}^{+\infty} \frac{\ell(t)}{t}dt \xrightarrow[x \to +\infty]{} +\infty.
\]
\end{itemize}
\end{theorem}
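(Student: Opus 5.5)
We prove each case by the substitution $t=xu$, which factors out the candidate leading term $x^{\beta+1}\ell(x)$ and leaves an integral of $\ell(xu)/\ell(x)$. Pointwise convergence of that ratio to $1$ comes from slow variation, and domination comes from Potter's bound (Proposition~\ref{Regular:Potter}).

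\textbf{Local boundedness.} A measurable slowly varying function is locally bounded on some $[A,+\infty)$. This follows from the Uniform Convergence Theorem: $\ell(\lambda x)/\ell(x)\to1$ uniformly for $\lambda$ in compact subsets of $(0,+\infty)$, see \cite[Section~1.2]{BinghamGoldieTeugels1987RegularVariation}. We take this as given, so all the integrals below are well defined.

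\textbf{Case $\beta>-1$.} Fix $\varepsilon\in(0,\beta+1)$ and let $A'\geq A$ be the threshold of Potter's bound, so that $\ell(xu)/\ell(x)\leq 2u^{-\varepsilon}$ whenever $x\geq A'$, $xu\geq A'$ and $u\leq1$. Split
\[
\int_A^x t^\beta\ell(t)\,dt=\int_A^{A'}t^\beta\ell(t)\,dt+x^{\beta+1}\ell(x)\int_{A'/x}^1 u^\beta\frac{\ell(xu)}{\ell(x)}\,du .
\]
The first term is a finite constant, and it is $o\bigl(x^{\beta+1}\ell(x)\bigr)$ because Potter gives $x^{\beta+1}\ell(x)\geq x^{\beta+1-\varepsilon}\to+\infty$. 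In the second term the integrand converges pointwise to $u^\beta$ and is dominated by $2u^{\beta-\varepsilon}$, which is integrable on $(0,1]$. Dominated convergence then gives the limit $\int_0^1u^\beta\,du=\frac1{\beta+1}$.

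\textbf{Case $\beta<-1$.} Write
\[
\int_x^\infty t^\beta\ell(t)\,dt=x^{\beta+1}\ell(x)\int_1^\infty u^\beta\frac{\ell(xu)}{\ell(x)}\,du .
\]
Choose $\varepsilon\in(0,-\beta-1)$. For $x$ large, Potter gives the domination $\ell(xu)/\ell(x)\leq 2u^{\varepsilon}$ for $u\geq1$. Since $u^{\beta+\varepsilon}$ is integrable on $[1,\infty)$, dominated convergence yields the limit $\int_1^\infty u^\beta\,du=\frac{1}{-\beta-1}$.

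\textbf{Case $\beta=-1$.} This is where the naive domination fails: $u^{-1-\varepsilon}$ is not integrable near $0$. We therefore argue differently. Set $L(x)=\int_A^x\ell(t)/t\,dt$.

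\emph{Slow variation of $L$.} For $\lambda>1$,
\[
L(\lambda x)-L(x)=\ell(x)\int_1^\lambda\frac{\ell(xu)}{\ell(x)}\frac{du}{u}\sim\ell(x)\log\lambda,
\]
using uniform convergence on the compact interval $[1,\lambda]$. So slow variation of $L$ reduces to showing $\ell(x)=o(L(x))$.

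\emph{Comparison of $L$ and $\ell$.} Fix $\delta\in(0,1)$. For $x\geq A/\delta$,
\[
\frac{L(x)}{\ell(x)}\geq\int_\delta^1\frac{\ell(xu)}{\ell(x)}\frac{du}{u}\longrightarrow\log(1/\delta),
\]
again by uniform convergence on $[\delta,1]$. Letting $\delta\to0$ gives $L(x)/\ell(x)\to+\infty$, which is what is needed. We note that this is the correct direction of the third bullet: the ratio $\ell(x)^{-1}\int_A^x\ell(t)/t\,dt$ tends to $+\infty$, not $0$, and the statement should be read accordingly.

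\emph{Convergent tail.} If $\int_A^\infty\ell(t)/t\,dt<\infty$, set $\bar L(x)=\int_x^\infty\ell(t)/t\,dt$. The same argument on $[1,K]$ gives $\bar L(x)/\ell(x)\geq\int_1^K\frac{\ell(xu)}{\ell(x)}\frac{du}{u}\to\log K$ for every $K$, hence $\bar L(x)/\ell(x)\to+\infty$. Slow variation of $\bar L$ follows as before, since $\bar L(x)-\bar L(\lambda x)\sim\ell(x)\log\lambda=o\bigl(\bar L(x)\bigr)$.

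\textbf{Series versions.} Compare sums with integrals. On $[k,k+1]$ the ratio $\ell(t)/\ell(k)$ is uniformly close to $1$ for large $k$, again by uniform convergence, so each sum is asymptotically equivalent to the corresponding integral.

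\textbf{Main obstacle.} The delicate step is the case $\beta=-1$, where Potter domination is unavailable near $0$. It is handled by the lower bound via a fixed compact window $[\delta,1]$ (respectively $[1,K]$), followed by sending $\delta\to0$ (respectively $K\to\infty$).
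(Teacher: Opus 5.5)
Your proof is correct and follows the standard argument of the reference the paper relies on; the paper gives no proof of its own and simply cites \cite{BinghamGoldieTeugels1987RegularVariation}, where the same route appears: the substitution $t=xu$ with Potter domination and dominated convergence for $\beta\neq-1$, and a Uniform Convergence Theorem lower bound on a fixed window $[\delta,1]$ (resp.\ $[1,K]$) for $\beta=-1$. You are also right that the third bullet contains a typo: in fact $\frac{1}{\ell(x)}\int_A^x\frac{\ell(t)}{t}\,dt\to+\infty$, as $\ell\equiv1$ already shows, so it is the reciprocal ratio that tends to $0$, and your argument proves this corrected version.
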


\begin{theorem}[{Monotone Density Theorem, 
\cite[Theorem 1.7.2]{BinghamGoldieTeugels1987RegularVariation}}]\label{Regular:MonotoneDensity}
Let \(f\) be a positive
measurable function, locally bounded on \([A,+\infty)\). Assume that
\(f\) is ultimately monotone.
\begin{itemize}
    \item If there is \(C>0\), \(\beta>0\) and a
slowly varying function \(\ell\) such that
\[ \int_{A}^{x}f(t)dt \sim_{x \to +\infty} C x^{\beta} \ell(x),
\] then \(f(x) \sim_{x \to +\infty} \beta C x^{\beta-1} \ell(x)\).
    \item If \(\int_{A}^{+\infty}f(t)dt < +\infty\) and in addition there is \(C>0\),
\(\beta < 0\) and a slowly varying function \(\ell\) such that
\[ \int_{x}^{+\infty} f(t)dt \sim_{x \to +\infty} C x^{\beta} \ell(x),
\] then \(f(x) \sim_{x \to +\infty} -\beta C x^{\beta-1} \ell(x)\).
\end{itemize}
\end{theorem}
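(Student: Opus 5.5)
The plan is to use the classical sandwich argument: since $f$ is monotone, its value at a point can be compared with its average over a short interval. That average is controlled by the regularly varying integral, and we then shrink the interval. Write $U(x)=\int_A^x f(t)\,\mathrm{d}t$ in the first case and $V(x)=\int_x^{+\infty} f(t)\,\mathrm{d}t$ in the second. Replacing $A$ by a larger constant changes $U$ only by an additive constant, which is negligible since $x^\beta\ell(x)\to+\infty$ for $\beta>0$. Hence we may assume $f$ is monotone on $[A,+\infty)$.

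\textbf{First case.} Fix $0<a<1<b$. The hypothesis and the slow variation of $\ell$ (through $\ell(\lambda x)/\ell(x)\to1$) give
\begin{equation*}
\frac{U(bx)-U(x)}{x^{\beta}\ell(x)} \to C(b^{\beta}-1), \qquad \frac{U(x)-U(ax)}{x^{\beta}\ell(x)} \to C(1-a^{\beta}).
\end{equation*}
Suppose first that $f$ is non-increasing. Then
\begin{equation*}
(b-1)x f(bx)\le U(bx)-U(x)\le (b-1)x f(x) \quad\text{and}\quad (1-a)x f(x)\le U(x)-U(ax)\le (1-a)xf(ax).
\end{equation*}
Dividing by $x^{\beta}\ell(x)$ yields
\begin{equation*}
\limsup_{x\to+\infty} \frac{x f(x)}{x^{\beta}\ell(x)}\le C\frac{1-a^{\beta}}{1-a}
\quad\text{and}\quad
\liminf_{x\to+\infty} \frac{x f(x)}{x^{\beta}\ell(x)}\ge C\frac{b^{\beta}-1}{b-1}.
\end{equation*}
Letting $a\uparrow1$ and $b\downarrow1$, both bounds tend to the derivative of $\lambda\mapsto C\lambda^\beta$ at $\lambda=1$, namely $\beta C$. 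This gives $f(x)\sim\beta C x^{\beta-1}\ell(x)$. If $f$ is non-decreasing, the inequalities are simply reversed: the interval $[x,bx]$ now gives the upper bound and $[ax,x]$ the lower bound, and the same limits follow.

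\textbf{Second case.} The same argument applies with $V(ax)-V(x)=\int_{ax}^x f$ and $V(x)-V(bx)=\int_x^{bx} f$, whose normalized limits are $C(a^{\beta}-1)$ and $C(1-b^{\beta})$. These are positive since $\beta<0$. Dividing again by $(1-a)x$ and $(b-1)x$ respectively and letting $a,b\to1$, both difference quotients converge to $-\beta C$. Hence $f(x)\sim -\beta C x^{\beta-1}\ell(x)$.

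There is no real obstacle. The only points needing care are the following three.
\begin{itemize}
\item The uniformity hidden in ``$\ell(\lambda x)/\ell(x)\to1$'' is only needed for the two fixed values $\lambda=a$ and $\lambda=b$, so pointwise slow variation suffices.
\item The limits must be taken in the correct order: first $x\to+\infty$ with $a,b$ fixed, then $a,b\to1$.
\item The case where $f$ is only ultimately monotone is handled by the reduction above, which changes $A$.
\end{itemize}
The statement for series used in the paper follows verbatim by applying the result to the step function $f(t)=u_{\lfloor t\rfloor}$, which is monotone whenever $(u_n)$ is.
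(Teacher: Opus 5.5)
Your proof is correct, and it is exactly the classical sandwich argument behind \cite[Theorem~1.7.2]{BinghamGoldieTeugels1987RegularVariation}, which the paper cites without proof: you compare $f(x)$ with its averages over $[ax,x]$ and $[x,bx]$ using monotonicity, let $x\to+\infty$ using the regular variation of the integral, and then let $a,b\to1$. The one step stated too quickly is your closing remark on series, which lies outside the statement and is not quite verbatim: transferring $\sum_{k<n}u_k\sim Cn^{\beta}\ell(n)$ from integers $n$ to real $x$ also needs $\ell(\lfloor x\rfloor)\sim\ell(x)$, which follows from the uniform convergence theorem for slowly varying functions, not from pointwise slow variation alone.
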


\newpage

\section*{Tool and computational resource disclosure\footnote{See the recommendations of the Leiden Declaration on Artificial Intelligence and Mathematics (\url{https://leidendeclaration.ai/}).}}
No LLM or other automated tools falling into the current denomination of generative AI have been used during the writing process or the research process for this work.

\printbibliography

@article{Marckert2008LineageProcessGaltonWatson,
  title = {The Lineage Process in {{Galton-Watson}} Trees and Globally Centered Discrete Snakes},
  author = {Marckert, Jean-François},
  date = {2008},
  journaltitle = {Annals of Applied Probability},
  shortjournal = {Ann. Appl. Probab.},
  volume = {18},
  number = {1},
  pages = {209--244},
  issn = {1050-5164},
  doi = {10.1214/07-AAP450},
  url = {https://doi.org/10.1214/07-AAP450},
  fjournal = {The Annals of Applied Probability},
  langid = {english},
  mrnumber = {2380897},
  zbmath = {5251137},
  zmnumber = {1140.60042}
}

@article{MarckertMokkademStatesSpacesSnake,
  title = {States Spaces of the Snake and of Its Tour – {{Convergence}} of the Discrete Snake},
  author = {Marckert, Jean-Francois and Mokkadem, Abdelkader},
  doi = {10.1023/B:JOTP.0000012004.73051.f3},
  langid = {english}
}

@article{RieraRosalezexcursions,
 author = {Riera, Armand and Rosales-Ortiz, Alejandro},
 title = {The structure of the local time of {Markov} processes indexed by {L{\'e}vy} trees},
 journal = {Probab. Theory Relat. Fields},
 issn = {0178-8051},
 volume = {189},
 number = {1-2},
 pages = {1--99},
 year = {2024},
 language = {English},
}

@article{uniqueness,
author = {Jean-Fran{\c{c}}ois Le Gall},
title = {{Uniqueness and universality of the Brownian map}},
volume = {41},
journal = {The Annals of Probability},
number = {4},
publisher = {Institute of Mathematical Statistics},
pages = {2880 -- 2960},
year = {2013},
doi = {10.1214/12-AOP792},
URL = {https://doi.org/10.1214/12-AOP792}
}

@article{convergence,
author = {Gr{\'e}gory Miermont},
title = {{The Brownian map is the scaling limit of uniform random plane quadrangulations}},
volume = {210},
journal = {Acta Mathematica},
number = {2},
publisher = {Institut Mittag-Leffler},
pages = {319 -- 401},
year = {2013},
doi = {10.1007/s11511-013-0096-8},
URL = {https://doi.org/10.1007/s11511-013-0096-8}
}

@article{legallsurvey2019,
 author = {Le Gall, Jean-Fran{\c{c}}ois},
 title = {Brownian geometry},
 fjournal = {Japanese Journal of Mathematics. 3rd Series},
 journal = {Jpn. J. Math. (3)},
 issn = {0289-2316},
 volume = {14},
 number = {2},
 pages = {135--174},
 year = {2019},
}

@article{LeGall2005RandomTreesApplications,
  title = {Random Trees and Applications},
  author = {Le Gall, Jean-François},
  date = {2005},
  journaltitle = {Probability Surveys},
  shortjournal = {Probab. Surv.},
  volume = {2},
  pages = {245--311},
  issn = {1549-5787},
  doi = {10.1214/154957805100000140},
  url = {https://eudml.org/doc/229216},
  fjournal = {Probability Surveys},
  langid = {english},
  mrnumber = {2203728},
  zbmath = {5728580},
  zmnumber = {1189.60161}
}

@article{Neveu1986,
author = {Neveu, J.},
journal = {Annales de l'I.H.P. Probabilités et statistiques},
language = {fre},
number = {2},
pages = {199-207},
publisher = {Gauthier-Villars},
title = {Arbres et processus de Galton-Watson},
url = {http://eudml.org/doc/77276},
volume = {22},
year = {1986},
}

@book{Feller1971IntroductionProbabilityTheory,
  title = {An Introduction to Probability Theory and Its Applications},
  author = {Feller, William},
  date = {1971},
  series = {Wiley Series in Probability and Mathematical Statistics},
  edition = {Second ed},
  volume = {2},
  publisher = {J. Wiley},
  location = {New York Chichester Brisbane [etc.]},
  isbn = {978-0-471-25709-7},
  langid = {english}
}

@book{PitmanPicard2006CombinatorialStochasticProcesses,
  title = {Combinatorial Stochastic Processes: {{Ecole}} d'été de Probabilités de {{Saint-Flour XXXII}} - 2002},
  shorttitle = {Combinatorial Stochastic Processes},
  author = {Pitman, Jim and Picard, Jean},
  namea = {{Ecole d'été de probabilités de Saint-Flour}},
  nameatype = {collaborator},
  date = {2006},
  series = {Ecole d'été de Probabilités de {{Saint-Flour}}},
  number = {32},
  publisher = {Springer},
  location = {Berlin},
  isbn = {978-3-540-30990-1},
  langid = {english}
}

@article{Kortchemski2017SubexponentialTailBounds,
  title = {Sub-Exponential Tail Bounds for Conditioned Stable {{Bienaymé-Galton-Watson}} Trees},
  author = {Kortchemski, Igor},
  date = {2017},
  journaltitle = {Probab. Theory Relat. Fields},
  volume = {168},
  number = {1--2},
  pages = {1--40},
  issn = {0178-8051},
  doi = {10.1007/s00440-016-0704-6},
  url = {https://doi.org/10.1007/s00440-016-0704-6},
  fjournal = {Probability Theory and Related Fields},
  langid = {english},
  mrnumber = {3651047},
  zbmath = {6734667},
  zmnumber = {1374.60167}
}

@online{AlsmeyerHoang2025PowerfractionalDistributionsBranching,
  title = {Power-Fractional Distributions and Branching Processes},
  author = {Alsmeyer, Gerold and Hoang, Viet Hung},
  date = {2025},
  doi = {10.48550/ARXIV.2503.18563},
  url = {https://arxiv.org/abs/2503.18563},
  urldate = {2026-05-18},
  pubstate = {prepublished},
  version = {2}
}

@inproceedings{Aldous1991ContinuumRandomTreea,
  title = {The Continuum Random Tree. {{II}}. {{An}} Overview},
  booktitle = {Stochastic Analysis ({{Durham}}, 1990)},
  author = {Aldous, David},
  date = {1991},
  series = {London Math. {{Soc}}. {{Lecture}} Note Ser.},
  volume = {167},
  pages = {23--70},
  publisher = {Cambridge Univ. Press, Cambridge},
  doi = {10.1017/CBO9780511662980.003},
  url = {https://doi.org/10.1017/CBO9780511662980.003},
  isbn = {0-521-42533-6},
  mrclass = {60C05 (05C80)},
  mrnumber = {1166406},
  mrreviewer = {RobinPemantle}
}

@article{LeGallLin2016RangeTreeindexedRandom,
  title = {The Range of Tree-Indexed Random Walk},
  author = {Le Gall, Jean-François and Lin, Shen},
  date = {2016},
  journaltitle = {Journal of The Institute of Mathematics of Jussieu},
  shortjournal = {J. Inst. Math. Jussieu},
  volume = {15},
  number = {2},
  pages = {271--317},
  issn = {1474-7480},
  doi = {10.1017/S1474748014000280},
  url = {https://doi.org/10.1017/S1474748014000280},
  fjournal = {Journal of the Institute of Mathematics of Jussieu},
  langid = {english},
  mrnumber = {3480967},
  zbmath = {6568866},
  zmnumber = {1339.60049}
}

@article{Duquesne2003LimitTheoremContour,
  title = {A Limit Theorem for the Contour Process of Conditioned {{Galton-Watson}} Trees},
  author = {Duquesne, Thomas},
  date = {2003},
  journaltitle = {Annals of Probability},
  shortjournal = {Ann. Probab.},
  volume = {31},
  number = {2},
  pages = {996--1027},
  issn = {0091-1798,2168-894X},
  doi = {10.1214/aop/1048516543},
  url = {https://doi.org/10.1214/aop/1048516543},
  fjournal = {The Annals of Probability},
  langid = {english},
  mrnumber = {1964956},
  zbmath = {1933038},
  zmnumber = {1025.60017}
}

@article{JansonMarckert2005ConvergenceDiscreteSnakes,
  title = {Convergence of {{Discrete Snakes}}},
  author = {Janson, Svante and Marckert, Jean-François},
  date = {2005-07},
  journaltitle = {Journal of Theoretical Probability},
  shortjournal = {J Theor Probab},
  volume = {18},
  number = {3},
  pages = {615--645},
  issn = {0894-9840, 1572-9230},
  doi = {10.1007/s10959-005-7252-9},
  url = {http://link.springer.com/10.1007/s10959-005-7252-9},
  urldate = {2025-11-04},
  langid = {english}
}

@article{Khanfir2025HortonStrahlerNumberGaltonWatson,
  title = {The {{Horton-Strahler}} Number of {{Galton-Watson}} Trees with Possibly Infinite Variance},
  author = {Khanfir, Robin},
  date = {2025-10-01},
  journaltitle = {The Annals of Applied Probability},
  shortjournal = {Ann. Appl. Probab.},
  volume = {35},
  number = {5},
  eprint = {2307.05983},
  eprinttype = {arXiv},
  eprintclass = {math},
  issn = {1050-5164},
  doi = {10.1214/25-AAP2204},
  url = {http://arxiv.org/abs/2307.05983},
  urldate = {2025-11-20},
  langid = {english}
}

@article{LalleyShao2015MaximalDisplacementCritical,
  title = {On the Maximal Displacement of Critical Branching Random Walk},
  author = {Lalley, Steven P. and Shao, Yuan},
  date = {2015-06-01},
  journaltitle = {Probability Theory and Related Fields},
  shortjournal = {Probab. Theory Relat. Fields},
  volume = {162},
  number = {1},
  pages = {71--96},
  issn = {1432-2064},
  doi = {10.1007/s00440-014-0566-8},
  url = {https://doi.org/10.1007/s00440-014-0566-8},
  urldate = {2025-12-01},
  langid = {english}
}

@article{Maillard2016MaximumTreeindexedRandom,
  title = {The Maximum of a Tree-Indexed Random Walk in the Big Jump Domain},
  author = {Maillard, Pascal},
  date = {2016},
  journaltitle = {ALEA, Lat. Am. J. Probab. Math. Stat.},
  volume = {13},
  number = {2},
  pages = {545--561},
  issn = {1980-0436},
  doi = {10.30757/ALEA.v13-22},
  fjournal = {ALEA. Latin American Journal of Probability and Mathematical Statistics},
  langid = {english},
  mrnumber = {3519759},
  zbmath = {6604207},
  zmnumber = {1345.60101}
}

@article{Slack1968BranchingProcessMean,
  title = {A Branching Process with Mean One and Possibly Infinite Variance},
  author = {Slack, R. S.},
  date = {1968},
  journaltitle = {Z. Wahrscheinlichkeitstheorie und Verw. Gebiete},
  volume = {9},
  pages = {139--145},
  doi = {10.1007/BF01851004},
  url = {https://doi.org/10.1007/BF01851004},
  fjournal = {Zeitschrift für Wahrscheinlichkeitstheorie und Verwandte Gebiete},
  mrclass = {60.67},
  mrnumber = {228077},
  mrreviewer = {BernardHarris},
  readstatus = {read}
}

@article{ArcherCarranceMenard2024PropertiesStableSnakes,
  title = {Some Properties of Stable Snakes},
  author = {Archer, Eleanor and Carrance, Ariane and M{\'e}nard, Laurent},
  year = 2024,
  month = mar,
  eprint = {2403.15275},
  primaryclass = {math.PR},
  publisher = {arXiv},
  doi = {10.48550/ARXIV.2403.15275},
  archiveprefix = {arXiv},
  copyright = {arXiv.org perpetual, non-exclusive license}
}

@book{BinghamGoldieTeugels1987RegularVariation,
  title = {Regular Variation},
  author = {Bingham, N. H. and Goldie, C. M. and Teugels, J. L.},
  year = 1987,
  series = {Encyclopedia of Mathematics and Its Applications},
  volume = {27},
  pages = {xx+491},
  publisher = {Cambridge University Press, Cambridge},
  doi = {10.1017/CBO9780511721434},
  isbn = {0-521-30787-2},
  mrclass = {26A12 (11K65 11N60 30-02 40E05 60-02 60Fxx)},
  mrnumber = {898871},
  mrreviewer = {R.A.Maller}
}

@misc{HouJiangRenSong2023TailProbabilityMaximal,
  title = {Tail Probability of Maximal Displacement in Critical Branching {{L\'evy}} Process with Stable Branching},
  author = {Hou, Haojie and Jiang, Yiyang and Ren, Yan-Xia and Song, Renming},
  year = 2023,
  month = oct,
  number = {arXiv:2310.05323},
  eprint = {2310.05323},
  primaryclass = {math},
  publisher = {arXiv},
  doi = {10.48550/arXiv.2310.05323},
  urldate = {2025-10-15},
  archiveprefix = {arXiv},
  langid = {english}
}

@book{DuquesneLeGall2002RandomTreesLevy,
  title = {Random {{Trees}}, {{Lévy Processes}} and {{Spatial Branching Processes}}},
  author = {Duquesne, Thomas and Le Gall, Jean-François},
  date = {2002},
  edition = {Astérisque},
  issn = {03031179, 24925926},
  doi = {10.24033/ast.545},
  url = {https://smf.emath.fr/publications/arbres-aleatoires-processus-de-levy-et-processus-de-branchement-spatiaux},
  urldate = {2026-03-27},
  langid = {english}
}

@article{LeGall2017SubordinationTreesBrownian,
  title = {Subordination of Trees and the {{Brownian}} Map},
  author = {Le Gall, Jean-Fran{\c c}ois},
  year = 2017,
  month = aug,
  journal = {Probability Theory and Related Fields},
  volume = {171},
  number = {3--4},
  pages = {819--864},
  publisher = {{Springer Science and Business Media LLC}},
  issn = {1432-2064},
  doi = {10.1007/s00440-017-0794-9}
}

@article{LeGallLin2015RangeTreeindexedRandom,
  title = {The Range of Tree-Indexed Random Walk in Low Dimensions},
  author = {Le Gall, Jean-Fran{\c c}ois and Lin, Shen},
  year = 2015,
  journal = {Annals of Probability},
  volume = {43},
  number = {5},
  pages = {2701--2728},
  issn = {0091-1798},
  doi = {10.1214/14-AOP947},
  fjournal = {The Annals of Probability},
  langid = {english},
  mrnumber = {3395472},
  zbmath = {6502685},
  zmnumber = {1335.60070}
}

@article{Marzouk2020ScalingLimitsDiscrete,
  title = {Scaling Limits of Discrete Snakes with Stable Branching},
  author = {Marzouk, Cyril},
  year = 2020,
  journal = {Ann. Inst. Henri Poincar\'e, Probab. Stat.},
  volume = {56},
  number = {1},
  pages = {502--523},
  issn = {0246-0203},
  doi = {10.1214/19-AIHP970},
  fjournal = {Annales de l'Institut Henri Poincar\'e. Probabilit\'es et Statistiques},
  langid = {english},
  mrnumber = {4058997},
  zbmath = {7199314},
  zmnumber = {1434.60248}
}

@online{Zhang2026EmptyBallsCritical,
  title = {On Empty Balls of Critical 2-Dimensional Branching Random Walks},
  author = {Zhang, Shuxiong},
  date = {2026-01-27},
  eprint = {2502.16225},
  eprinttype = {arXiv},
  eprintclass = {math.PR},
  doi = {10.48550/arXiv.2502.16225},
  url = {http://arxiv.org/abs/2502.16225},
  urldate = {2026-07-10}
}

@online{Addario-BerryBrandenbergerHamdanKerriou2022UniversalHeightWidth,
  title = {Universal Height and Width Bounds for Random Trees},
  author = {Addario-Berry, Louigi and Brandenberger, Anna and Hamdan, Jad and Kerriou, Céline},
  date = {2022-04-25},
  eprint = {2105.03195},
  eprinttype = {arXiv},
  eprintclass = {math.PR},
  doi = {10.48550/arXiv.2105.03195},
  url = {http://arxiv.org/abs/2105.03195},
  urldate = {2026-08-10},
  pubstate = {prepublished}
}

@online{article2,
title = {A bijective approach to study the metric net of random quadrangulations},
author = {Aurillard, Antoine and Mourichoux, Mathieu},
pubstate={inpreparation}
}

@phdthesis{TheseAlexis,
title = {Markov property for the vertical profile of random labelled trees and models of Brownian geometry.},
author = {Metz-Donnadieu, Alexis},
pubstate={inpreparation}
}

\end{document}